\definecolor{alert}{HTML}{CD5C5C}
\definecolor{myblue}{HTML}{616AC5}
\newtheorem{theorem}{Theorem}[section]
\newtheorem{corollary}{Corollary}[theorem]
\newtheorem{lemma}[theorem]{Lemma}
\newtheorem{proposition}[theorem]{Proposition}
\theoremstyle{definition}
\newtheorem{definition}[theorem]{Definition}
\theoremstyle{definition}
\newtheorem{example}[theorem]{Example}
\theoremstyle{remark}
\newtheorem*{remark}{\textbf{Remark}}
\theoremstyle{section}
\newtheorem*{ttheorem}{Theorem \ref{fundamental_Q}}
\theoremstyle{section}
\newtheorem*{tttheorem}{Theorem \ref{weighted_composition_general}}
\theoremstyle{section}
\newtheorem*{ttttheorem}{Theorem \ref{GKZ_general}}
\theoremstyle{section}
\newtheorem*{tttttheorem}{Theorem \ref{CPO_Hardy_full}}
\title{\textbf{Cyclicity preserving operators \\ on spaces of analytic functions in $\mathbb{C}^n$}}
\date{}
\author{Jeet Sampat}
\begin{document}

\maketitle \begin{abstract}

For spaces of analytic functions defined on an open set in $\mathbb{C}^n$ that satisfy certain nice properties, we show that operators that preserve shift-cyclic functions are necessarily weighted composition operators. Examples of spaces for which this result holds true consist of the Hardy space $H^p(\mathbb{D}^n) \, (0 < p < \infty)$, the Drury-Arveson space $\mathcal{H}^2_n$, and the Dirichlet-type space $\mathcal{D}_{\alpha} \, (\alpha \in \mathbb{R})$. We focus on the Hardy spaces and show that when $1 \leq p < \infty$, the converse is also true. The techniques used to prove the main result also enable us to prove a version of the Gleason-Kahane-\.Zelazko theorem for partially multiplicative linear functionals on spaces of analytic functions in more than one variable.

\end{abstract}

\section{Introduction}

The results presented here are motivated by a number of questions about cyclic functions in the Hardy space $H^p(\mathbb{D}^n)$. Recall that $\mathbb{D}^n := \{ z \in \mathbb{C}^n \, \big \rvert \, |z_i| < 1, \, \forall \, 1 \leq i \leq n \}$ is the unit polydisc in $\mathbb{C}^n$, and for $0 < p < \infty$ $$H^{p}(\mathbb{D}^{n}) := \bigg \{ f \in \text{Hol}(\mathbb{D}^{n})\; \bigg \rvert \; ||f||_{p}^{p} := \underset{0 \leq r < 1}{\text{sup}} \int \limits_{\mathbb{T}^{n}} |f(rw)|^{p} \; d\sigma_{n}(w) < \infty \bigg \}$$ where $\sigma_n$ denotes the normalized Lebesgue measure on the $n$-torus $\mathbb{T}^n := \{ z \in \mathbb{C}^{n}\; \big \rvert \; |z_{i}| = 1, \; \forall \; 1 \leq i \leq n \}$. It is known that $H^p(\mathbb{D}^n)$ is a Banach space for all $1 \leq p < \infty$ with norm $||\cdot||_p$. A function $f \in H^p(\mathbb{D}^n)$ is said to be (shift) cyclic if $$\text{S}[f] := \overline{\text{span}}\{ z^{\alpha}f(z) \; | \; \alpha \in \mathbb{Z}^{+}(n) \} = \overline{\text{span}}\{ pf \; | \; p \text{ - polynomial} \} = H^p(\mathbb{D}^n)$$ where $\mathbb{Z}^+(n)$ is the set of all $n$-tuples $\alpha = (\alpha_i)_{i = 1}^n$ of non-negative integers, and $z^{\alpha} := z_1^{\alpha_1} z_2^{\alpha_2} \dots z_n^{\alpha_n}$. We also have the space $H^{\infty}(\mathbb{D}^n)$, $$ H^{\infty}(\mathbb{D}^n) := \Big \{ f \in \text{Hol}(\mathbb{D}^n) \, \Big \rvert \, ||f||_{\infty} := \underset{w \in \mathbb{D}^n}{\text{sup}} |f(w)| < \infty \Big \} $$ which is the set of all bounded analytic functions defined on $\mathbb{D}^n$. Just like $H^p(\mathbb{D}^n)$ for $1 \leq p < \infty$, $H^{\infty}(\mathbb{D}^n)$ is a Banach space with the supremum norm $||\cdot||_{\infty}$.

\vspace{1.5mm}

For $H^p(\mathbb{D})$ when $0 < p < \infty$, cyclic functions have been characterized using Beurling's theorem and the canonical factorization theorem (see \emph{Theorem 7.4} in \cite{duren1970theory} and \emph{Theorem 4} in \cite{10.2307/1994442}). In the case when $n > 1$, we do not have a version of Beurling's theorem nor the canonical factorization theorem (see \emph{Section 4.4} in \cite{rudin1969function} for more details). Several sufficient conditions for cyclicity in $H^p(\mathbb{D}^n)$ were provided by N. Nikolski (\emph{Theorems 3.3} and \emph{3.4}, \cite{AIF_2012__62_5_1601_0}), but there are no non-trivial necessary conditions known as of now.

\vspace{1.5mm}

One way of obtaining necessary conditions would be through operators that preserve cyclicity. That is, identify all operators $T : H^p(\mathbb{D}^n) \xrightarrow{} H^q(\mathbb{D}^m)$ such that $Tf$ is cyclic whenever $f$ is cyclic.

\vspace{1.5mm}

\noindent In the case when $p = q = 2$ and $n = m = 1$, a result of P. C. Gibson, M. P. Lamoureux and G. F. Margrave shows that all such operators have to be weighted composition operators and vice versa (\emph{Theorem 4}, \cite{gibson2015constructive}). This was generalized to $H^p(\mathbb{D})$ for $0 < p \leq \infty$ by J. Mashreghi and T. Ransford with the following theorem (\emph{Theorem 2.2}, \cite{mashreghi2015gleason}).

\begin{theorem} \label{mash_Hardy}

Let $0 < p \leq \infty$ and let $T : H^p(\mathbb{D}) \xrightarrow{} \text{\emph{Hol}}(\mathbb{D})$ be a linear map such that $Tg(z) \neq 0$ for all outer functions (same as cyclic functions when $0 < p < \infty$) $g \in H^p(\mathbb{D})$ and all $z \in \mathbb{D}$. Then there exist holomorphic maps $\phi : \mathbb{D} \xrightarrow{} \mathbb{D}$ and $\psi : \mathbb{D} \xrightarrow{} \mathbb{C} \setminus \{0\}$ such that \[ Tf = \psi \cdot (f \circ \phi) \; \big (\, \forall \, f \in H^p(\mathbb{D}) \big ). \]

\end{theorem}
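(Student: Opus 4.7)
The plan is to reduce the statement to a pointwise Gleason-Kahane-\.Zelazko-type lemma. For each fixed $z \in \mathbb{D}$, define the linear functional $\Lambda_z : H^p(\mathbb{D}) \to \mathbb{C}$ by $\Lambda_z(f) := (Tf)(z)$. The hypothesis on $T$ says precisely that $\Lambda_z(g) \neq 0$ for every outer $g \in H^p(\mathbb{D})$ and every $z \in \mathbb{D}$; in particular, since the constant function $1$ is outer, $\psi(z) := \Lambda_z(1) = T(1)(z)$ is a nowhere vanishing holomorphic function on $\mathbb{D}$. Granting the pointwise claim that any such $\Lambda_z$ is a non-zero scalar multiple of point evaluation at some $\phi(z) \in \mathbb{D}$, we would read off $\phi(z) = T(\mathrm{id})(z)/T(1)(z)$, which is automatically holomorphic as a ratio with non-vanishing denominator, lands in $\mathbb{D}$ by the pointwise claim, and assembles into the desired factorisation $Tf = \psi \cdot (f \circ \phi)$.

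For the pointwise claim, normalise $\Lambda(1) = 1$ and set $\phi := \Lambda(z)$, where $z$ now denotes the identity function on $\mathbb{D}$. If $|\phi| \geq 1$ then $z - \phi$ is a zero-free polynomial on $\mathbb{D}$, hence outer, yet $\Lambda(z - \phi) = 0$, a contradiction; so $\phi \in \mathbb{D}$. Restricting to $H^\infty(\mathbb{D})$, the functional $\Lambda|_{H^\infty}$ is a linear functional on a commutative unital Banach algebra that is non-vanishing on every invertible element (invertibility in $H^\infty$ forces outerness), and the classical Kahane-\.Zelazko theorem therefore makes $\Lambda|_{H^\infty}$ automatically continuous and multiplicative. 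Writing $f - f(\phi) = (z - \phi) g$ with $g \in H^\infty$ for each $f \in H^\infty$, multiplicativity gives $\Lambda(f - f(\phi)) = \Lambda(z - \phi) \Lambda(g) = 0$, hence $\Lambda(f) = f(\phi)$ throughout $H^\infty$. For $p = \infty$ this already completes the proof. An alternative self-contained derivation of multiplicativity is the exponential trick: since $\log|e^{\lambda f}| = \mathrm{Re}(\lambda f)$ is bounded harmonic for $f \in H^\infty$, the function $\lambda \mapsto \Lambda(e^{\lambda f})$ is entire and nowhere vanishing of order at most one, hence of the form $\exp(\lambda \Lambda(f))$, which yields $\Lambda(f^n) = \Lambda(f)^n$ and multiplicativity by polarisation.

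The main obstacle is the passage from $H^\infty$ to all of $H^p$ for $0 < p < \infty$. Dilation suggests a natural route: for $f \in H^p$, the functions $f_r(w) := f(rw)$ lie in $H^\infty$, converge to $f$ in the $H^p$ (quasi-)norm as $r \to 1^-$, and satisfy $\Lambda(f_r) = f_r(\phi) = f(r\phi) \to f(\phi)$. To conclude $\Lambda(f) = f(\phi)$ from this requires continuity of $\Lambda$ on $H^p$, which $T$ does not a priori provide. Automatic continuity ought to follow in the Gleason-Kahane-\.Zelazko tradition, since $\ker \Lambda$ is a hyperplane containing no outer functions and the abundance of outer functions in $H^p$ (via inner-outer factorisation) should force it to be closed; the sub-Banach regime $0 < p < 1$ demands the most delicate handling, since standard Hahn-Banach-based continuity arguments are unavailable and one likely has to work directly with the quasi-norm and the density of $H^\infty$ in $H^p$.
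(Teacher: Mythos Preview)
Your reduction to the pointwise functional $\Lambda_z$ and your treatment of the $H^\infty$ case via the classical Gleason--Kahane--\.Zelazko theorem are both correct and match the architecture of the Mashreghi--Ransford argument that the paper cites. The genuine gap is exactly where you place it: the passage from $H^\infty$ to all of $H^p$ in the absence of any continuity hypothesis on $T$. Your proposed route---establish automatic continuity by arguing that $\ker\Lambda$ contains no outer function and hence must be closed---is left entirely as a heuristic; you give no mechanism for deducing closedness, and for $0<p<1$ the Hahn--Banach-type tools one would reach for are unavailable. The dilation argument $f_r\to f$ is fine on the $H^p$ side, but without continuity of $\Lambda$ it says nothing about $\Lambda(f_r)\to\Lambda(f)$.

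The paper does not reprove this theorem but notes explicitly (see the remarks following \emph{Theorem~\ref{CPO_Hardy_Banach}}) that the Mashreghi--Ransford proof ``uses the canonical factorization theorem'' precisely to avoid continuity. The missing idea is this: for each outer $F\in H^p$, the normalized functional $h\mapsto\Lambda(hF)/\Lambda(F)$ on $H^\infty$ again satisfies the GK\.Z hypothesis (invertible $h$ is outer, so $hF$ is outer), hence equals evaluation at some $\phi_F\in\mathbb{D}$ by your own $H^\infty$ step. Writing any outer $F\in H^p$ as a quotient $G/H$ of bounded outer functions (split $\log|F^*|$ into its positive and negative parts) gives $HF=G\in H^\infty$, from which one checks $\phi_F=\phi_G=\phi_1=\phi$ and then $\Lambda(F)=G(\phi)/H(\phi)=F(\phi)$. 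Finally, for arbitrary $f\in H^p$ the inner--outer factorization $f=IF$ with $I\in H^\infty$ yields $\Lambda(f)=\Lambda(IF)=I(\phi)\Lambda(F)=I(\phi)F(\phi)=f(\phi)$, with no appeal to continuity anywhere.
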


It is important to note that continuity of $T$ is not assumed in the above theorem. Also, the theorem uses outer functions instead of cyclic functions but the notion of outer functions coincides with cyclic functions in the case $n = 1$ and $0 < p < \infty$. See \emph{Definition \ref{definition_outer}} for a brief discussion about outer functions. For more general spaces over $\mathbb{D}$, J. Mashreghi and T. Ransford prove the following theorem (\emph{Theorem 3.2}, \cite{mashreghi2015gleason}).

\begin{theorem} \label{mash_general}

Suppose $X \subset \text{\emph{Hol}}(\mathbb{D})$ satisfies $(X1)$-$(X3)$ and $Y \subset X$ satisfies $(Y1)$-$(Y2)$ as defined below. Let $T : X \xrightarrow{} \text{\emph{Hol}}(\mathbb{D})$ be a continuous linear map such that $Tg(z) \neq 0$ for every $g \in Y$ and $z \in \mathbb{D}$. Then there exist holomorphic functions $\phi : \mathbb{D} \xrightarrow{} \mathbb{D}$ and $\psi : \mathbb{D} \xrightarrow{} \mathbb{C} \setminus \{0\}$ such that $$Tf(z) = \psi(z) f(\phi(z))$$ for each $f \in X$.

\end{theorem}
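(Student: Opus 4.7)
The plan is to fix $z \in \mathbb{D}$, study the continuous linear functional $\Lambda_z : X \to \mathbb{C}$ defined by $\Lambda_z(f) := (Tf)(z)$, and apply a Gleason-Kahane-\.Zelazko (GKZ) style argument pointwise in $z$. Continuity of $\Lambda_z$ is immediate from continuity of $T$ together with continuity of evaluation at $z$ on $\text{Hol}(\mathbb{D})$. Assuming $1 \in Y$ (a natural consequence of (Y1)-(Y2)), set $\psi(z) := \Lambda_z(1) \neq 0$ and $\phi(z) := \Lambda_z(\iota)/\psi(z)$, where $\iota(w) = w$. To check $\phi(z) \in \mathbb{D}$, note that $|\phi(z)| \geq 1$ would make $\iota - \phi(z)$ nowhere-vanishing on $\mathbb{D}$, hence in $Y$ by (Y1)-(Y2), forcing $\Lambda_z(\iota - \phi(z)) \neq 0$; yet $\Lambda_z(\iota - \phi(z)) = \Lambda_z(\iota) - \phi(z)\psi(z) = 0$ by construction.

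The core step is to upgrade this to the identity $\Lambda_z(f) = \psi(z) f(\phi(z))$ on all of $X$. I would first verify it on polynomials via a GKZ-style argument. Given a polynomial $g$, the function $\exp(tg) \in X$ (expected to be guaranteed by (X1)-(X3)) is nowhere-vanishing on $\mathbb{D}$ and hence lies in $Y$; consequently the entire function $G(t) := \Lambda_z(\exp(tg))$ is zero-free. Continuity of $\Lambda_z$ combined with a growth estimate on $\|\exp(tg)\|_X$ in $t$ bounds the order of $G$ by $1$, so Hadamard factorization yields $G(t) = \psi(z)\exp(at)$ for some $a \in \mathbb{C}$. Matching Taylor coefficients gives $\Lambda_z(g^n) = \psi(z) \bigl(\Lambda_z(g)/\psi(z)\bigr)^n$ for all $n$; polarization then upgrades this to full multiplicativity of $\Lambda_z/\psi(z)$ on the polynomial algebra, which together with $\Lambda_z(\iota)/\psi(z) = \phi(z)$ forces $\Lambda_z(p)/\psi(z) = p(\phi(z))$ for every polynomial $p$. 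Density of polynomials in $X$ (again expected from (X1)-(X3)) combined with continuity of $\Lambda_z$ promotes the identity to all of $X$.

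Finally, since $\psi = T(1) \in \text{Hol}(\mathbb{D})$ is nonvanishing and $\phi = T(\iota)/T(1)$ is a ratio of holomorphic functions with nonvanishing denominator, both maps are automatically holomorphic on $\mathbb{D}$, and the inclusion $\phi(\mathbb{D}) \subset \mathbb{D}$ has already been established pointwise. The main obstacle is the GKZ step: one needs $\exp(tg) \in X$ for a sufficiently large class of $g \in X$ together with a growth control on $\|\exp(tg)\|_X$ that pins the order of $G$ at $1$. Both requirements hinge on the precise formulation of (X1)-(X3); if $X$ is not closed under exponentiation, one restricts to polynomial $g$ (for which $\exp(tg)$ is a norm-convergent series of polynomials whose growth can be controlled) and bootstraps the multiplicativity identity to all of $X$ via continuity. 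Verifying these axiomatic hypotheses is the technical content that distinguishes this theorem from the clean single-space case of Theorem \ref{mash_Hardy}.
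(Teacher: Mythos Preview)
Your approach is correct and follows the same architecture the paper describes: fix $z\in\mathbb{D}$, form the continuous functional $\Lambda_z=\mathrm{ev}_z\circ T$, and run a GK\.Z-type argument to identify $\Lambda_z$ as a scalar multiple of a point evaluation. The paper does not reprove Theorem~\ref{mash_general} itself (it is quoted from \cite{mashreghi2015gleason}), but in its own generalizations (\emph{Theorems~\ref{fundamental_P}} and~\emph{\ref{weighted_composition_general}}) it streamlines your GK\.Z step: rather than using $e^{tg}$ for an arbitrary polynomial $g$ and then polarizing, it takes only $g(w)=w$, so that $F(w):=\Lambda_z(e^{wz})=\sum_{n\geq 0}\Lambda_z(z^n)\,w^n/n!$ is an entire, nonvanishing function of order~$\leq 1$, hence $F(w)=a\,e^{bw}$, and matching coefficients gives $\Lambda_z(z^n)=a\,b^n$ immediately---no polarization needed. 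Your verification that $e^{tg}\in Y$ via (Y1), that $\phi(z)\in\mathbb{D}$ via (Y1)--(Y2), and that $\psi=T(1)$, $\phi=T(\iota)/T(1)$ are holomorphic all match the paper's reasoning.
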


Properties $(X1)$-$(X3)$ are:

\begin{enumerate}

\item[$(X1)$] $X$ contains the set of polynomials, and they form a dense subspace of $X$.

\item[$(X2)$] For each $w \in \mathbb{D}$, the evaluation map $f \mapsto f(w) : X \xrightarrow{} \mathbb{C}$ is continuous.

\item[$(X3)$] $X$ is shift-invariant, i.e. $f \in X \Rightarrow zf \in X$.

\end{enumerate}

and properties $(Y1)$-$(Y2)$ are:

\begin{enumerate}

\item[$(Y1)$] If $g \in X$ and $0 < \text{inf}_{\mathbb{D}}|g| \leq \text{sup}_{\mathbb{D}}|g| < \infty$, then $g \in Y$.

\item[$(Y2)$] If $g(z) = z - \lambda$ where $\lambda \in \mathbb{T}$, then $g \in Y$.

\end{enumerate}

\noindent The proof of \emph{Theorem \ref{mash_general}} relies on classifying $\Lambda \in X^*$ such that $\Lambda(g) \neq 0, \forall g \in Y$ (\emph{Theorem 3.1}, \cite{mashreghi2015gleason}). This is similar to a result known as the Gleason-Kahane-\.Zelazko (GK\.Z) theorem (see \cite{gleason1967characterization} and \cite{kahane1968characterization}), which identifies multiplicative linear functionals in a complex unital Banach algebra through its action on invertible elements (see \emph{Theorem \ref{GKZ}} in \emph{Section 5} below). In \cite{mashreghi2015gleason}, it is shown that a version of the GK\.Z theorem holds for modules of a complex unital Banach algebra and it can be applied to the multiplier algebra of the space $X$ satisfying properties $(X1)$-$(X3)$ above to obtain \emph{Theorem \ref{mash_general}}.

\vspace{1.5mm}

In \cite{kou2017linear}, K. Kou and J. Liu provide a similar but simpler argument for $H^p(\mathbb{D})$ when $ 1 < p < \infty$, which is essentially the same as that of \emph{Theorem \ref{mash_general}},  but instead of the subset $Y$ they consider the set $\{ e^{w \cdot z} \, | \, w \in \mathbb{C} \}$ (see \emph{Theorem 2}, \cite{kou2017linear}). They also showed that the converse of \emph{Theorem \ref{mash_general}} is true when $1 < p < \infty$, i.e. all weighted composition operators on $H^p(\mathbb{D})$ for $1 < p < \infty$ also preserve outer (and thus cyclic) functions.

\vspace{1.5mm}

Using techniques similar to those in \cite{kou2017linear} and \cite{mashreghi2015gleason}, we can generalize \emph{Theorem \ref{mash_general}} to spaces of analytic functions in more that one variable and also over arbitrary domains in a simpler way. The fundamental result presented here is the following theorem from \emph{Section 3}.

\begin{ttheorem}

Suppose $\mathcal{X}$ satisfies \textbf{Q1}-\textbf{Q3} over a set $\hat{D} \subset \mathbb{C}^n$ for some $n \in \mathbb{N}$. Let $\Lambda \in \mathcal{X}^*$ be such that $$\Lambda(e^{w \cdot z}) \neq 0 \text{ for every } w \in \mathbb{C}^n.$$ Then, there exist $a \in \mathbb{C} \setminus \{0\}$ and $b \in \hat{D}$ such that $\Lambda(f) = a \cdot f(b)$.

\end{ttheorem}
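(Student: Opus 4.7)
The plan is to imitate the one-variable strategy of Kou and Liu \cite{kou2017linear}, analyzing $\Lambda$ through its action on the family of exponentials $\{e^{w \cdot z} : w \in \mathbb{C}^n\}$. Define
\[
F(w) := \Lambda(e^{w \cdot z}) \qquad (w \in \mathbb{C}^n).
\]
The first step is to show that $F$ is entire on $\mathbb{C}^n$ and of exponential type. The axioms \textbf{Q1}--\textbf{Q3} ought to guarantee that each exponential $e^{w \cdot z}$ belongs to $\mathcal{X}$, that $w \mapsto e^{w \cdot z}$ is a holomorphic $\mathcal{X}$-valued map, and that $\|e^{w \cdot z}\|_{\mathcal{X}} \leq C_1 e^{C_2 |w|}$ for some constants. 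Composing with the continuous functional $\Lambda$ then yields the required properties of $F$, and by hypothesis $F$ is zero-free on $\mathbb{C}^n$.

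The heart of the argument is to show that such an $F$ is necessarily of the form $F(w) = a \, e^{b \cdot w}$ with $a \in \mathbb{C} \setminus \{0\}$ and $b \in \mathbb{C}^n$. Since $\mathbb{C}^n$ is simply connected and $F$ is zero-free, I would write $F = e^G$ for an entire $G : \mathbb{C}^n \to \mathbb{C}$. The growth bound on $F$ yields the one-sided estimate $\mathrm{Re}\, G(w) \leq A + B |w|$. Restricting $G$ to each complex line $t \mapsto G(tw)$ through the origin, the one-variable Borel--Carath\'eodory inequality upgrades this to a two-sided bound $|G(tw)| \leq C_w (1 + |t|)$, which forces $t \mapsto G(tw)$ to be affine in $t$. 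As this holds for every direction $w$, differentiating at $t = 0$ shows that $G$ itself is affine on $\mathbb{C}^n$, say $G(w) = c + b \cdot w$, and hence $F(w) = a \, e^{b \cdot w}$ with $a := e^c$.

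With this identity in hand, $\Lambda$ agrees with the evaluation-type functional $f \mapsto a f(b)$ on every exponential $e^{w \cdot z}$ and therefore on their linear span. Assuming \textbf{Q1} ensures that this span is dense in $\mathcal{X}$ (which should follow by differentiating $e^{w \cdot z}$ in $w$ to produce all monomials $z^{\alpha}$ and then invoking density of polynomials), the continuity of $\Lambda$ forces $\Lambda = a \cdot \mathrm{ev}_b$ on all of $\mathcal{X}$. To conclude $b \in \hat{D}$, observe that $\mathrm{ev}_b = a^{-1} \Lambda$ is then automatically continuous on $\mathcal{X}$; provided \textbf{Q2} characterizes $\hat{D}$ as the set of points at which point evaluation is a continuous functional on $\mathcal{X}$, this yields $b \in \hat{D}$.

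The principal obstacle is Step~2: extracting the affine form of $\log F$ from the fact that $F$ is zero-free and of exponential type on $\mathbb{C}^n$. In one complex variable this is an immediate consequence of Hadamard's factorization theorem, but in several variables one cannot factor $F$ zero by zero and must instead control $\log F$ globally, which I propose to do by reducing to one variable along complex lines and applying Borel--Carath\'eodory. A secondary subtlety is verifying that the point $b \in \mathbb{C}^n$ produced by this analysis actually lies in the common domain $\hat{D}$ rather than merely in $\mathbb{C}^n$, which requires invoking the precise content of \textbf{Q2}.
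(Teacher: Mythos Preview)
Your proposal is correct and follows essentially the same approach as the paper: define $F(w) = \Lambda(e^{w\cdot z})$, show it is a zero-free entire function of exponential type, write $F = e^G$ and reduce to complex lines through the origin to force $G$ affine, then use density of polynomials and the maximality clause of \textbf{Q2} to land $b$ in $\hat{D}$. The only cosmetic differences are that the paper uses a Schwarz-lemma argument where you invoke Borel--Carath\'eodory (these are equivalent here), and the paper reads off $\Lambda(z^{\alpha}) = a\,b^{\alpha}$ directly by comparing Taylor coefficients of $F$ rather than arguing via density of the span of exponentials---a slightly cleaner bookkeeping step, but the same idea.
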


\noindent Here, properties \textbf{Q1}-\textbf{Q3} are modified versions of $(X1)$-$(X3)$ (see \emph{Section 3}), so that they make sense for spaces of analytic functions in more than one variable. In \emph{Section 4}, using \emph{Theorem \ref{fundamental_Q}}, we will obtain the following generalization of \emph{Theorem \ref{mash_general}} mentioned above.

\begin{tttheorem}

Suppose $\mathcal{X}$ is a space of functions on a set $\hat{D} \subset \mathbb{C}^n$ that are analytic in an open set $D \subset \hat{D}$, and satisfies \textbf{Q1}-\textbf{Q3}. Suppose $\mathcal{Y}$ is a topological vector space of functions on a set $E$ such that $\Gamma_u g := g(u)$, $g \in \mathcal{Y}$ defines a continuous linear functional for all $u \in E$. Let $T : \mathcal{X} \xrightarrow{} \mathcal{Y}$ be a continuous linear operator. Then, the following are equivalent :

\begin{enumerate}

\item[$(1)$] $T(e^{w \cdot z})$ is non-vanishing for every $w \in \mathbb{C}^n$.

\item[$(2)$] $Tf(u) = a(u) f(b(u))$ for some $a \in \mathcal{Y}$ non-vanishing, and a map $b : E \xrightarrow{} \hat{D}$.

\end{enumerate}

In this case, note that $a = T(1)$ and $b = \frac{T(z)}{T(1)}$, where $T(z)$ represents the $n$-tuple of functions $\big (T(z_i) \big )_{i = 1}^n$.

\end{tttheorem}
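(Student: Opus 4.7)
The plan is to use Theorem \ref{fundamental_Q} as a pointwise tool. The direction $(2) \Rightarrow (1)$ is immediate: if $Tf(u) = a(u) f(b(u))$ with $a$ non-vanishing on $E$, then $T(e^{w \cdot z})(u) = a(u) e^{w \cdot b(u)}$, and this never vanishes because $a(u) \neq 0$ and the exponential is always nonzero. I would dispatch this first.

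The substantial direction is $(1) \Rightarrow (2)$. For each fixed $u \in E$, I would form the composition $\Lambda_u := \Gamma_u \circ T : \mathcal{X} \xrightarrow{} \mathbb{C}$. Since $T$ is continuous into $\mathcal{Y}$ and $\Gamma_u$ is continuous on $\mathcal{Y}$ by hypothesis, $\Lambda_u \in \mathcal{X}^*$. Hypothesis $(1)$ translates directly into $\Lambda_u(e^{w \cdot z}) = T(e^{w \cdot z})(u) \neq 0$ for every $w \in \mathbb{C}^n$. Applying Theorem \ref{fundamental_Q} to $\Lambda_u$ produces constants $a(u) \in \mathbb{C} \setminus \{0\}$ and $b(u) \in \hat{D}$ with $\Lambda_u(f) = a(u) f(b(u))$ for all $f \in \mathcal{X}$, which is precisely the desired pointwise formula $Tf(u) = a(u) f(b(u))$.

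It then remains to identify $a$ and $b$ and check membership. Evaluating the formula at the constant function $f \equiv 1$ (which lies in $\mathcal{X}$ by \textbf{Q1}) gives $a(u) = T(1)(u)$, so $a = T(1) \in \mathcal{Y}$ and $a$ is non-vanishing on all of $E$ since $a(u) \neq 0$ was guaranteed by Theorem \ref{fundamental_Q} at every $u$. Evaluating at the coordinate functions $f(z) = z_i$ (also in $\mathcal{X}$ via \textbf{Q1}) gives $T(z_i)(u) = a(u) b_i(u)$, hence $b = T(z)/T(1)$ coordinate-wise, as claimed.

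The main obstacle I anticipate is essentially bookkeeping: verifying that $\Lambda_u$ really is continuous on $\mathcal{X}$ (which rests on the assumed continuity of the evaluation $\Gamma_u$ on $\mathcal{Y}$ together with the continuity of $T$) and ensuring the output $b(u)$ landing in $\hat{D}$ is compatible with feeding it back into arbitrary $f \in \mathcal{X}$; these functions are defined on $\hat{D}$ even though analyticity is only required on $D \subset \hat{D}$, so the expression $f(b(u))$ is always well-defined. No regularity of the maps $a, b$ beyond the evaluation identity and non-vanishing of $a$ is required, so no further work is needed on that front.
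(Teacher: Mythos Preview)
Your proposal is correct and follows essentially the same approach as the paper: fix $u \in E$, apply Theorem~\ref{fundamental_Q} to the functional $\Gamma_u \circ T$, and then read off $a = T(1)$ and $b = T(z)/T(1)$. The paper's proof is slightly terser but otherwise identical in structure and content.
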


Using \emph{Theorem \ref{weighted_composition_general}} and some other facts about Hardy spaces in several complex variables, we prove the following generalization of \emph{Theorem \ref{mash_Hardy}} above and \emph{Theorem 2} in \cite{kou2017linear}.

\begin{tttttheorem}

$(1)$ Fix $0 < p,q < \infty$ and $m, n \in \mathbb{N}$. Let $T : H^p(\mathbb{D}^n) \xrightarrow{} H^q(\mathbb{D}^m)$ be a bounded linear operator such that $Tf$ is cyclic whenever $f$ is cyclic. Then, $T$ is necessarily a weighted composition operator, i.e. there exist analytic functions $a \in H^q(\mathbb{D}^m)$ and $b : \mathbb{D}^m \xrightarrow{} \mathbb{D}^n$ such that $Tf(z) = a(z) f(b(z))$ for every $z \in \mathbb{D}^m$ and $f \in H^p(\mathbb{D}^n)$.

\vspace{1.5mm}

Furthermore $a = T1$ is cyclic and $b = \frac{T(z)}{T1}$, where $T(z) = \big (T(z_i) \big )_{i = 1}^n$.

\vspace{1.5mm}

\noindent In the case when $1 \leq q < \infty$, the converse is also true. That is, all bounded weighted composition operators from $H^p(\mathbb{D}^n)$ into $H^q(\mathbb{D}^m)$ also preserve cyclicity.

\vspace{1.5mm}

$(2)$ Fix $0 < p,q \leq \infty$ and $m,n \in \mathbb{N}$. Let $T : H^{p}(\mathbb{D}^n) \xrightarrow{} H^{q}(\mathbb{D}^m)$ be a bounded linear operator such that $Tf$ is outer whenever $f$ is outer. Then, $T$ is necessarily a weighted composition operator, i.e. there exist analytic functions $a \in H^q(\mathbb{D}^m)$ and $b : \mathbb{D}^m \xrightarrow{} \mathbb{D}^n$ such that $Tf(z) = a(z) f(b(z))$ for every $z \in \mathbb{D}^m$ and $f \in H^{p}(\mathbb{D}^n)$.

\vspace{1.5mm}

Furthermore, $a = T1$ is outer and $b = \frac{T(z)}{T1}$, where $T(z) = \big (T(z_i) \big )_{i = 1}^n$.

\end{tttttheorem}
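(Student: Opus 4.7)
My plan is to reduce both parts to \emph{Theorem \ref{weighted_composition_general}} with $\mathcal{X} = H^p(\mathbb{D}^n)$ and $\mathcal{Y} = H^q(\mathbb{D}^m)$. First I would verify the boilerplate: $H^p(\mathbb{D}^n)$ satisfies \textbf{Q1}--\textbf{Q3} over $\hat{D} = \mathbb{D}^n$, and point-evaluations on $\mathbb{D}^m$ are continuous on $H^q(\mathbb{D}^m)$ for every $0 < q \leq \infty$; both are standard. The only substantive hypothesis left is that $T(e^{w \cdot z})$ is nowhere-vanishing on $\mathbb{D}^m$ for every $w \in \mathbb{C}^n$. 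For this I use $g_w(z) := e^{w \cdot z}$: since $g_{-w} \in H^\infty(\mathbb{D}^n)$, multiplication by $g_{-w}$ reduces approximation of any $h \in H^p(\mathbb{D}^n)$ by $\{P g_w : P \text{ polynomial}\}$ to polynomial density in $H^p(\mathbb{D}^n)$, giving $g_w$ cyclic for $0 < p < \infty$; and a short computation using $\log|g_w(0)| = 0 = \int_{\mathbb{T}^n} \log|g_w| \, d\sigma_n$ gives $g_w$ outer for $0 < p \leq \infty$.

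With this in place, the forward direction falls out quickly. The hypothesis of part $(1)$ (resp.\ part $(2)$) forces $T(g_w)$ to be cyclic (resp.\ outer) in $H^q(\mathbb{D}^m)$, and either property implies $T(g_w)$ is non-vanishing on $\mathbb{D}^m$---for the cyclic case because point-evaluations are continuous and $1 \in H^q$, for the outer case by definition. \emph{Theorem \ref{weighted_composition_general}} then delivers $Tf(z) = a(z) f(b(z))$ with $a = T(1) \in H^q(\mathbb{D}^m)$ non-vanishing and $b = T(z)/T(1) : \mathbb{D}^m \to \mathbb{D}^n$, and feeding the cyclic (resp.\ outer) constant $1 \in H^p(\mathbb{D}^n)$ through $T$ shows $a = T(1)$ inherits the same property.

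For the converse in part $(1)$, I would assume $T = M_a C_b$ is bounded with $a = T(1)$ cyclic, take $f$ cyclic in $H^p(\mathbb{D}^n)$, and aim for $S[Tf] = H^q(\mathbb{D}^m)$. The crux is showing $S[Tf]$ is closed under multiplication by every $h \in H^\infty(\mathbb{D}^m)$. This I would prove by first dilating to $h_r(z) := h(rz)$ for $0 < r < 1$, whose Taylor partial sums $S_N$ converge to $h_r$ uniformly on $\overline{\mathbb{D}^m}$; this gives $S_N \cdot Tf \to h_r \cdot Tf$ in $H^q$ (and hence $h_r \cdot Tf \in S[Tf]$), and then $r \to 1^-$ together with dominated convergence in $H^q$ gives $h \cdot Tf \in S[Tf]$. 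Now choose polynomials $P_k$ with $P_k f \to 1$ in $H^p(\mathbb{D}^n)$: applying $T$ and noting $P_k \circ b \in H^\infty(\mathbb{D}^m)$ yields $a = \lim_k (P_k \circ b) \cdot Tf \in S[Tf]$. Shift-invariance of $S[Tf]$ then gives $S[a] \subseteq S[Tf]$, and cyclicity of $a$ closes the argument.

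The main obstacle is precisely this $H^\infty(\mathbb{D}^m)$-invariance of the cyclic subspace $S[Tf] \subset H^q(\mathbb{D}^m)$, which is where polynomial density in $H^q$ and boundedness of the resulting multiplication operators are essential; this is also what forces the restriction $1 \leq q < \infty$ on the converse, since the argument collapses at $q = \infty$ where polynomials are no longer dense.
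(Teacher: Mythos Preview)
Your forward direction is essentially the paper's: both parts feed through \emph{Theorem \ref{weighted_composition_general}} after observing that the exponentials $e^{w\cdot z}$ are cyclic (for part (1)) or outer (for part (2)), so that $T(e^{w\cdot z})$ is non-vanishing. One caveat you gloss over: for $p=\infty$ in part (2), $H^\infty(\mathbb{D}^n)$ fails \textbf{Q1} (polynomials are not dense), so \emph{Theorem \ref{weighted_composition_general}} does not apply directly. The paper treats this case separately (\emph{Example \ref{example_p_infty}}): one still runs the entire-function argument of \emph{Theorem \ref{fundamental_P}} on $\Lambda = \Gamma_u\circ T$, and then uses that the polynomials $z_i-\beta$ with $\beta\notin\mathbb{D}$ are outer to force $b(u)\in\mathbb{D}^n$.

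Your converse, however, is genuinely different from the paper's, and in fact stronger. The paper proves $H^\infty(\mathbb{D}^m)$-invariance of $S[Tf]$ (its \emph{Lemma 4.5}) via Hahn--Banach and $L^q$--$L^{q'}$ duality: if $\phi\cdot Tf\notin S[Tf]$, pick a separating functional represented by some $h\in L^{q'}(\mathbb{T}^m)$, then pass to Fej\'er means of $\phi$ in the weak-$*$ topology to reach a contradiction. This is exactly why the paper restricts to $1\le q<\infty$. Your route---dilate $h$ to $h_r$, approximate $h_r$ uniformly by its Taylor polynomials, then send $r\to 1^-$ via dominated convergence on $\mathbb{T}^m$---avoids duality entirely and, contrary to your last paragraph, works equally well for $0<q<1$: the estimate $|(S_N-h_r)Tf|^q\le\|S_N-h_r\|_\infty^q|Tf|^q$ and the domination $|h_r-h|^q|Tf|^q\le(2\|h\|_\infty)^q|Tf|^q$ need no triangle inequality for $\|\cdot\|_q$. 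So your argument actually removes the restriction $q\ge 1$ that the paper's method imposes; the only genuine obstruction is $q=\infty$, where dominated convergence on the boundary gives nothing about the $H^\infty$ norm.
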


\noindent Here, $f \in H^p(\mathbb{D}^n)$ is said to be outer if $$\log|f(0)| = \frac{1}{2 \pi} \int_{\mathbb{T}^n} \log |f|.$$ See the discussion after \emph{Definition \ref{definition_outer}} below for more details on outer functions.

\vspace{1.5mm}

As mentioned before, there is a version of the GK\.Z theorem for modules of complex unital Banach algebras in \cite{mashreghi2015gleason}. As an interesting byproduct of the main results presented here, we will prove the following result for Banach spaces of analytic functions, instead of modules of Banach algebras, classifying all partially multiplicative linear functionals in terms of their action on a certain set of exponentials.

\begin{ttttheorem}

Suppose $\widehat{\mathcal{X}}$ is a space of analytic functions on a set $\widehat{D} \subset \mathbb{C}^n$ that satisfies \textbf{Q1}-\textbf{Q3}. Let $\Lambda \in \mathcal{X}^*$ such that $\Lambda(1) = 1$. Then, the following are equivalent :

\begin{enumerate}

\item[$(i)$] $\Lambda(e^{w \cdot z}) \neq 0$ for every $w \in \mathbb{C}^n$.

\item[$(ii)$] $\Lambda \equiv \Lambda_b$ for some $b \in \widehat{D}$.

\item[$(iii)$] $\Lambda$ is \textbf{M2}.

\item[$(iv)$] $\Lambda$ is \textbf{M1}.

\end{enumerate}

\end{ttttheorem}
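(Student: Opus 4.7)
The natural approach is to close the equivalence through (ii) as the hub, using \emph{Theorem \ref{fundamental_Q}} for the only non-trivial step. My plan is: (i) $\Rightarrow$ (ii) via the fundamental theorem; (ii) $\Rightarrow$ (iii), (iv) essentially for free because point evaluations are fully multiplicative; and (iii) $\Rightarrow$ (i), (iv) $\Rightarrow$ (i) via a clean algebraic identity between exponentials.

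For (i) $\Rightarrow$ (ii), I invoke \emph{Theorem \ref{fundamental_Q}} directly: under the hypothesis that $\Lambda(e^{w \cdot z}) \neq 0$ for every $w \in \mathbb{C}^n$, that result furnishes $a \in \mathbb{C} \setminus \{0\}$ and $b \in \widehat{D}$ with $\Lambda(f) = a \cdot f(b)$. Testing against the constant function $1$ and using the normalization $\Lambda(1) = 1$ forces $a = 1$, so $\Lambda \equiv \Lambda_b$ as required.

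For (ii) $\Rightarrow$ (iii) and (ii) $\Rightarrow$ (iv), once $\Lambda = \Lambda_b$ the genuine homomorphism identity $\Lambda_b(fg) = f(b) g(b) = \Lambda_b(f)\Lambda_b(g)$ holds for every product of elements whose product lies in $\widehat{\mathcal{X}}$. In particular $\Lambda_b$ satisfies every weakening of multiplicativity of the form ``$\Lambda$ respects a prescribed family of algebraic relations,'' so both \textbf{M1} and \textbf{M2} hold automatically.

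To close the cycle I would then establish (iv) $\Rightarrow$ (i) and (iii) $\Rightarrow$ (i) — possibly by first showing the obvious implication between \textbf{M1} and \textbf{M2} and handling only one. The decisive observation is the identity $e^{w \cdot z} \cdot e^{-w \cdot z} \equiv 1$: whichever form of partial multiplicativity applies, it gives $\Lambda(e^{w \cdot z}) \Lambda(e^{-w \cdot z}) = \Lambda(1) = 1$, precluding $\Lambda(e^{w \cdot z}) = 0$ for any $w$. The principal obstacle here is bookkeeping: one must verify first that $e^{\pm w \cdot z} \in \widehat{\mathcal{X}}$ (which should follow from density of polynomials provided by \textbf{Q1} combined with continuity of $\Lambda$), and second that the particular pair $(e^{w \cdot z}, e^{-w \cdot z})$ falls within the class of products on which the definitions \textbf{M1} and \textbf{M2} require $\Lambda$ to be multiplicative. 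Once these are checked, the equivalence follows at once.
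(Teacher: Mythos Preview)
Your approach is essentially the paper's: the chain $(i)\Rightarrow(ii)\Rightarrow(iii)\Rightarrow(iv)\Rightarrow(i)$, with \emph{Theorem \ref{fundamental_Q}} for the first step and the identity $e^{w\cdot z}e^{-w\cdot z}=1$ for the last. The one point to sharpen is the bookkeeping you flag for $(iv)\Rightarrow(i)$: under \textbf{M1} you need $e^{w\cdot z}\in\mathcal{M}(\widehat{\mathcal{X}})$, not merely $e^{w\cdot z}\in\widehat{\mathcal{X}}$; this multiplier property is established inside the proof of \emph{Lemma 4.2} (via the bounded-shift property \textbf{Q3} and completeness of the space), and membership itself is \emph{Lemma \ref{exponential_existence}} --- neither relies on continuity of $\Lambda$ as you suggest.
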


\subsection{Acknowledgements}

This paper would not have been possible without the guidance of my PhD advisor, Greg Knese.

\vspace{1.5mm}

\noindent I would also like to thank John McCarthy, Brett Wick, and my colleagues Alberto Dayan and Christopher Felder for valuable discussions on topics covered in this paper and mathematics in general.

\vspace{1.5mm}

\noindent This research was partially supported by the NSF grant DMS-1900816.

\vspace{1.5mm}

Let us start by setting up some helpful notation.

\section{Notations and preliminary results}

Fix $n \in \mathbb{N}$. For an open set $D \subset \mathbb{C}^n$, let $\mathcal{X} \subset \text{Hol}(D)$ be a Banach space satisfying the following properties :

\begin{enumerate}

\item[\textbf{P1}] The set of polynomials $\mathcal{P}$ is dense in $\mathcal{X}$.

\item[\textbf{P2}] The point-evaluation map $\Lambda_z : \mathcal{X} \xrightarrow{} \mathbb{C}$, defined as $\Lambda_z(f) := f(z)$ for every $f \in \mathcal{X}$, is a bounded linear functional on $\mathcal{X}$ for every $z \in D$.

\item[\textbf{P3}] The $i^{th}$-shift operator $S_i : \mathcal{X} \xrightarrow{} \mathcal{X}$, defined as $S_i f(z) := z_i f(z)$ for every $(z_k)_{k=1}^n = z \in D$ and $f \in \mathcal{X}$, is a bounded linear operator for every $1 \leq i \leq n$.

\end{enumerate}

Examples of spaces that satisfy \textbf{P1}-\textbf{P3} include the Hardy space $H^{p}(\mathbb{D}^{n})$ for $1 \leq p < \infty$, the Drury-Arveson space $\mathcal{H}^2_n$ on the unit ball $\mathbb{B}_n := \big \{ z \in \mathbb{C}^n \, \big \rvert \, \underset{i = 1}{\overset{n}{\sum}} |z_i|^2 < 1 \big \}$, and the Dirichlet-type spaces $\mathcal{D}_{\alpha}$ for $\alpha \in \mathbb{R}$. $$ \mathcal{H}^2_n := \bigg \{ f \in \text{Hol}(\mathbb{B}_n) \; \Big \rvert \; ||f||^2_{\mathcal{H}^2_n} := \underset{a \in \mathbb{Z}^+(n)}{\sum} \frac{a_1 ! \; a_2 ! \, \dots \, a_n !}{(a_1 + a_2 + \dots + a_n)!} |\hat{f}(a)|^2 < \infty \bigg \} $$ $$\mathcal{D}_{\alpha} := \bigg \{ f \in \text{Hol}(\mathbb{D}^n) \; \Big \rvert \; ||f||_{\mathcal{D}_\alpha}^{2} := \underset{a \in \mathbb{Z}^{+}(n)}{\sum} \big ((a_1+1) \dots (a_n+1)\big )^{\alpha}|\hat{f}(a)|^2 < \infty \bigg \} $$ The list of Dirichlet-type spaces consists of many important spaces like the usual Dirichlet space ($\alpha = 1$), the Hardy space $H^2(\mathbb{D}^n)$ ($\alpha = 0$), and also the Bergman space ($\alpha = -1$). For these spaces, we prove the following preliminary result.

\begin{theorem} \label{fundamental_P}

Suppose $\mathcal{X}$ satisfies \textbf{P1}-\textbf{P3} over an open set $D \subset \mathbb{C}^n$. Let $\Lambda \in \mathcal{X}^*$ be such that $\Lambda(e^{w \cdot z}) \neq 0$ for every $w \in \mathbb{C}^n$. Then, there exist $a \in \mathbb{C} \setminus \{0\}$ and $b \in \sigma_{r}(S)$ such that $\Lambda p = a \cdot p(b)$ for every $p \in \mathcal{P}$. Here, $\sigma_{r}(S)$ is the right Harte spectrum of $S = (S_i)_{i = 1}^n$.

\end{theorem}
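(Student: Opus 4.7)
My plan is to study the entire function $F(w) := \Lambda(e^{w\cdot z})$ on $\mathbb{C}^n$, derive the formula $\Lambda p = a\,p(b)$ from its Taylor coefficients, and then verify that the point $b$ produced lies in $\sigma_r(S)$ via a duality argument.

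First, I will check that $e^{w\cdot z}$ defines an element of $\mathcal{X}$ and that $F$ is entire of exponential type. By \textbf{P1} the constant function $1$ and all monomials lie in $\mathcal{X}$, and by \textbf{P3} one has $\|z^\alpha\|_\mathcal{X} = \|S_1^{\alpha_1}\cdots S_n^{\alpha_n} 1\|_\mathcal{X} \leq \|1\|_\mathcal{X} \prod_i \|S_i\|^{\alpha_i}$. Hence the series $e^{w\cdot z} = \sum_\alpha \frac{w^\alpha}{\alpha!} z^\alpha$ converges absolutely in $\mathcal{X}$ with norm at most $\|1\|_\mathcal{X} \prod_i e^{\|S_i\| |w_i|}$, and applying $\Lambda$ termwise shows $F(w) = \sum_\alpha \frac{w^\alpha}{\alpha!}\Lambda(z^\alpha)$ is entire with $|F(w)| \leq C \prod_i e^{\|S_i\| |w_i|}$ for some constant $C > 0$.

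Since $F$ is zero-free by hypothesis and $\mathbb{C}^n$ is simply connected, I can write $F = e^g$ for some entire $g$, and the above growth bound gives $\operatorname{Re} g(w) \leq C' + C''|w|$. For each fixed direction $v \in \mathbb{C}^n$, the slice $\lambda \mapsto g(\lambda v)$ is a one-variable entire function whose real part grows at most linearly in $|\lambda|$, so the classical Borel--Carath\'eodory theorem (together with Liouville applied to $(g(\lambda v) - g(0))/\lambda$) forces it to be affine in $\lambda$. Differentiating at $\lambda = 0$ then yields $g(v) = g(0) + \sum_i v_i\,\partial_i g(0)$, so $g$ itself is affine and $F(w) = a\,e^{b\cdot w}$ for some $a = \Lambda(1) \neq 0$ and some $b \in \mathbb{C}^n$. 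Matching Taylor coefficients in the two expansions of $F$ immediately yields $\Lambda(z^\alpha) = a\,b^\alpha$, and hence $\Lambda p = a\,p(b)$ for every polynomial $p$.

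Finally, to show $b \in \sigma_r(S)$ I will combine the polynomial formula with a continuity argument. The identity $\Lambda(S_i p) = \Lambda(z_i p) = a(z_i p)(b) = b_i \Lambda(p)$ holds on $\mathcal{P}$; by density (\textbf{P1}) and continuity of $\Lambda$ and the $S_i$ (\textbf{P2}, \textbf{P3}), this extends to $\Lambda \circ (S_i - b_i) \equiv 0$ on all of $\mathcal{X}$. If $b$ were not in $\sigma_r(S)$, there would exist operators $R_1, \dots, R_n \in B(\mathcal{X})$ with $\sum_i (S_i - b_i) R_i = I_\mathcal{X}$, forcing $\Lambda f = \sum_i \Lambda\bigl((S_i - b_i) R_i f\bigr) = 0$ for every $f \in \mathcal{X}$ and contradicting $\Lambda(1) = a \neq 0$. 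I expect the multivariate Hadamard step to be the main obstacle; the ray-by-ray reduction to one-variable Borel--Carath\'eodory above is what sidesteps any need for a genuinely multidimensional minimum-modulus principle.
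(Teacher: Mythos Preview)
Your proposal is correct and follows essentially the same route as the paper: form the entire function $F(w)=\Lambda(e^{w\cdot z})$, use the exponential growth bound and a slice-by-slice Borel--Carath\'eodory/Schwarz argument to force $F(w)=a\,e^{b\cdot w}$, then match Taylor coefficients. Your spectrum argument is a mild streamlining of the paper's---you first extend $\Lambda\circ(S_i-b_i)=0$ from polynomials to all of $\mathcal{X}$ by density and then apply it directly to $R_i f$, whereas the paper carries out an explicit $\epsilon$-approximation of $A_i1$ by polynomials; the idea is the same.
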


Recall that $\sigma_{r}(S)$ is the complement in $\mathbb{C}^n$ of $\rho_{r}(S)$, where \[ \rho_{r}(S) := \Big \{ \lambda \in \mathbb{C}^n \; \Big \rvert \; \; \exists \; \{A_i\}_{i = 1}^n \subset \mathcal{B}(\mathcal{X}) \text{ such that } \sum_{i = 1}^n (S_i - \lambda_i I)A_i = I \Big \} . \]

\noindent Note that it is not immediate from \textbf{P1}-\textbf{P3} that $e^{w \cdot z} \in \mathcal{X}$. We address this separately as a lemma before we prove \emph{Theorem \ref{fundamental_P}}.

\begin{lemma} \label{exponential_existence}

For each $w \in \mathbb{C}^n$, we have $e^{w \cdot z} \in \mathcal{X}$. In fact, $p_k := \underset{|\alpha| \leq k}{\sum} \frac{w^{\alpha} \cdot z^{\alpha}}{\alpha!} \xrightarrow{} e^{w \cdot z}$ in $\mathcal{X}$ as $k \xrightarrow{} \infty$, where $|\alpha| := \alpha_1 + \dots + \alpha_n$ and $\alpha! := \alpha_1! \, \alpha_2! \, \dots \, \alpha_n! \,$.

\end{lemma}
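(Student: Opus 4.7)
The plan is to represent the partial sums $p_k$ using the shift operators and exploit their boundedness (property \textbf{P3}) to show that $(p_k)$ is a Cauchy sequence in $\mathcal{X}$, then use completeness to obtain a limit in $\mathcal{X}$ and identify it pointwise via \textbf{P2}.

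First I would observe that $\mathbf{1} \in \mathcal{P} \subset \mathcal{X}$ by \textbf{P1}, and that for any multi-index $\alpha = (\alpha_1, \dots, \alpha_n) \in \mathbb{Z}^+(n)$ one has $z^{\alpha} = S^{\alpha} \mathbf{1}$, where $S^{\alpha} := S_1^{\alpha_1} \cdots S_n^{\alpha_n}$. The shifts $S_i$ commute (they are multiplication operators), so this expression is unambiguous, and by \textbf{P3}
\[
\|S^{\alpha}\|_{\mathcal{B}(\mathcal{X})} \leq \prod_{i=1}^{n} \|S_i\|^{\alpha_i} =: M^{\alpha}.
\]
Hence $p_k = \sum_{|\alpha| \leq k} \tfrac{w^{\alpha}}{\alpha!} S^{\alpha} \mathbf{1}$ lies in $\mathcal{X}$ for each $k$.

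Next I would check the Cauchy property: for $j < k$,
\[
\|p_k - p_j\|_{\mathcal{X}} \;\leq\; \|\mathbf{1}\|_{\mathcal{X}} \sum_{j < |\alpha| \leq k} \frac{|w^{\alpha}| \, M^{\alpha}}{\alpha!} \;\leq\; \|\mathbf{1}\|_{\mathcal{X}} \sum_{j < |\alpha|} \prod_{i=1}^{n} \frac{(|w_i|\,\|S_i\|)^{\alpha_i}}{\alpha_i!}.
\]
The unrestricted series equals $\|\mathbf{1}\|_{\mathcal{X}} \prod_{i=1}^{n} e^{|w_i|\|S_i\|} < \infty$, so the tail tends to $0$ as $j \to \infty$. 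Thus $(p_k)$ is Cauchy, and since $\mathcal{X}$ is a Banach space, $p_k \to g$ for some $g \in \mathcal{X}$.

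Finally, to identify $g$ with $e^{w \cdot z}$, I would apply \textbf{P2}: for each $z \in D$, the evaluation $\Lambda_z$ is continuous, so
\[
g(z) = \Lambda_z(g) = \lim_{k \to \infty} \Lambda_z(p_k) = \lim_{k \to \infty} \sum_{|\alpha| \leq k} \frac{w^{\alpha} z^{\alpha}}{\alpha!} = e^{w \cdot z}.
\]
Since $g \in \mathcal{X}$ and $g(z) = e^{w \cdot z}$ on $D$, this gives both $e^{w \cdot z} \in \mathcal{X}$ and the claimed convergence of $p_k$. There is no real obstacle here; the only point worth care is using the commutativity of the shifts to define $S^{\alpha}$, and making sure the majorizing series is a genuine product of one-variable exponentials — this is what forces the decomposition $\|S^{\alpha}\| \leq \prod_i \|S_i\|^{\alpha_i}$ to be the natural estimate to use.
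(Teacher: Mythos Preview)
Your proof is correct and follows essentially the same approach as the paper: bound $\|z^{\alpha}\|_{\mathcal{X}}$ by $\|\mathbf{1}\|\prod_i \|S_i\|^{\alpha_i}$ using \textbf{P3}, use absolute summability (equivalently, the Cauchy property) together with completeness of $\mathcal{X}$ to obtain a limit $g$, and then invoke the continuity of point evaluations \textbf{P2} to identify $g$ with $e^{w\cdot z}$ pointwise. The paper phrases the convergence via absolute summability of the full series rather than an explicit Cauchy tail estimate, but this is only a cosmetic difference.
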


\begin{proof}

Fix $w \in \mathbb{C}^n$. We show that $\underset{k \xrightarrow{} \infty}{\lim} p_k$ exists. This follows from the fact that $\mathcal{X}$ is a Banach space and
$$ \underset{\alpha \in \mathbb{Z}^+(n)}{\sum} \Big \rvert \Big \rvert \frac{w^{\alpha} \cdot z^{\alpha}}{\alpha !} \Big \rvert \Big \rvert \leq \underset{\alpha \in \mathbb{Z}^+(n)}{\sum} \frac{ |w|^{\alpha}||S||^{\alpha}||1||}{\alpha !} = ||1|| e^{|w| \cdot ||S||} \, \text{; } |w| = \big (|w_1|,\dots,|w_n| \big ) \text{, } ||S|| = \big (||S_1||,\dots,||S_n|| \big ) $$ Let $g = \underset{k \xrightarrow{} \infty}{\lim} p_n$ in $\mathcal{X}$. Note that $p_k$ converges to $e^{w \cdot z}$ point-wise. By \textbf{P2}, this implies $g(z) = e^{w \cdot z}$. \qedhere

\end{proof}

\begin{proof}[Proof of Theorem \ref{fundamental_P}]

Given that $\Lambda(e^{z \cdot w}) \neq 0$ for every $w \in \mathbb{C}^n$. As $\Lambda$ is continuous, we get $$ \underset{\alpha \in \mathbb{Z}^{+}(n)}{\sum} \frac{\Lambda(z^{\alpha}) \cdotp w^{\alpha}}{\alpha!} \neq 0, \, \forall w \in \mathbb{C}^{n}.$$ Let $\lambda_{\alpha} := \Lambda(z^{\alpha})$, $\forall \alpha \in \mathbb{Z}^{+}(n)$. Now, $|\lambda_{\alpha}| \leq ||\Lambda(z^{\alpha})|| \leq ||\Lambda||\cdot||z^{\alpha}||$. Thus, $$|\lambda_{\alpha}| \leq ||\Lambda||\cdot||S_{1}||^{\alpha_{1}}\cdot||S_{2}||^{\alpha_{2}}\cdot\cdot\cdot\cdot||S_{n}||^{\alpha_{n}}\cdot||1|| \text{ for every } \alpha \in \mathbb{Z}^{+}(n).$$

If we define $F(w) := \underset{\alpha \in \mathbb{Z}^{+}(n)}{\sum} \frac{\lambda_{\alpha}w^{\alpha}}{\alpha!}$, $\forall w \in \mathbb{C}^{n}$, then $F$ is a non-vanishing entire function such that $|F(w)| \leq ||\Lambda||\cdot||1||\cdot e^{|w| \cdot ||S||}$. So, there exist $a_{0} \in \mathbb{C}$ and $b \in \mathbb{C}^{n}$ such that $F(w) = e^{a_{0} + b \cdot w}$ (see \emph{Theorem \ref{non_vanishing_entire}}). Using the definition of $F(w)$ above and comparing power-series coefficients, we get $\lambda_{\alpha} = e^{a_{0}}b^{\alpha}$, $\forall \alpha \in \mathbb{Z}^{+}(n)$. Let $a := e^{a_{0}} \in \mathbb{C} \setminus \{ 0 \}$. This means $$ \Lambda(z^{\alpha}) = a \cdotp b^{\alpha}, \, \forall \alpha \in \mathbb{Z}^{+}(n).$$ Note that we have shown $\Lambda p = a \cdotp p(b)$ for every polynomial $p$. It only remains to show that $b \in \sigma_{r}(S)$.

\vspace{1.5mm}

\noindent For the sake of contradiction, suppose $b \not \in \sigma_{r}(S)$. Thus, there exists $\{A_i\}_{i = 1}^n \subset \mathcal{B}(\mathcal{X})$ such that $\sum_{i = 1}^n (S_i - \lambda_i I)A_i = I$. In particular, $\sum_{i = 1}^n (z_i - \lambda_i)A_i1 = 1$.

\vspace{1.5mm}

\noindent Fix $\epsilon > 0$ and pick $p_i \in \mathcal{P}$ for each $1 \leq i \leq n$ such that $|| A_i1 - p_i || < \epsilon \big / (n \cdot ||\Lambda|| \cdot ||S_i - \lambda_i I||)$. Note that, $$\bigg | \bigg | 1 - \sum_{i = 1}^n (z_i - \lambda_i)p_i \bigg | \bigg | = \bigg | \bigg | \sum_{i = 1}^n (z_i - \lambda_i)(A_i1-p_i) \bigg | \bigg | \leq \sum_{i = 1}^n || S_i - \lambda_i I || \cdot || A_i 1 - p_i || < \sum_{i = 1}^n \frac{\epsilon}{n \cdot ||\Lambda||} \leq \frac{\epsilon}{||\Lambda||}.$$

\noindent Based on the representation of $\Lambda$ on polynomials, we know that $\Lambda \big (\sum_{i = 1}^n (z_i - \lambda_i)p_i \big ) = 0$. This means $$|a| = | \Lambda1 | = \bigg | \Lambda1 - \Lambda \bigg (\sum_{i = 1}^n (z_i - \lambda_i)p_i \bigg ) \bigg | \leq ||\Lambda|| \cdot \bigg | \bigg | 1 - \sum_{i = 1}^n (z_i - \lambda_i)p_i \bigg | \bigg | < \epsilon.$$ As $\epsilon > 0$ was arbitrarily chosen and $a$ is non-zero, we get a contradiction. Hence, $b \in \sigma_{r}(S)$. \qedhere

\end{proof}

\begin{remark}

It would be great if we can show that $b \in D$ but that need not be the case. It is obvious that $D \subset \sigma_r(S)$, but it may not be possible to extend the domain of every function in $\mathcal{X}$ to the whole of $\sigma_r(S)$ in order to extend the functional in the theorem to all of $\mathcal{X}$ as a point-evaluation. We shall use the notion of \emph{maximal domains} to address this issue later in \emph{Section 3}. For that, we mention the last part of the proof of \emph{Theorem \ref{fundamental_P}} as a separate result.

\end{remark}

\begin{proposition} \label{spectrum_lemma}

Suppose $\mathcal{X}$ satisfies \textbf{P1}-\textbf{P3} over an open set $D \subset \mathbb{C}^n$. Let $b \in \mathbb{C}^n$ such that $\Lambda_b(p) := p(b)$ for every $p \in \mathcal{P}$ has a bounded linear extension to all of $\mathcal{X}$. Then $b$ lies in $\sigma_r(S)$.

\end{proposition}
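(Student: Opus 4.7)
The plan is to mimic, essentially verbatim, the contradiction argument that already appears at the end of the proof of Theorem \ref{fundamental_P}; indeed, that passage only uses the fact that $\Lambda_b$ is a bounded linear functional whose restriction to polynomials is point-evaluation at $b$, which is precisely the hypothesis of the present proposition.

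Concretely, I would proceed by contradiction. Assume $b \notin \sigma_r(S)$, so that $b \in \rho_r(S)$ and there exist operators $A_1, \ldots, A_n \in \mathcal{B}(\mathcal{X})$ satisfying $\sum_{i=1}^n (S_i - b_i I) A_i = I$. Applying this identity to the constant function $1 \in \mathcal{X}$ (which lies in $\mathcal{X}$ by \textbf{P1}) yields the equation $\sum_{i=1}^n (z_i - b_i)(A_i 1) = 1$ inside $\mathcal{X}$. Next, I would invoke density of polynomials (\textbf{P1}) to fix, for any prescribed $\epsilon > 0$, polynomials $p_i \in \mathcal{P}$ with $\|A_i 1 - p_i\| < \epsilon / \big(n \|\Lambda_b\| \|S_i - b_i I\|\big)$. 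The triangle inequality and \textbf{P3} then give $\big\|1 - \sum_{i=1}^n (z_i - b_i) p_i \big\| < \epsilon/\|\Lambda_b\|$.

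The last step is to apply the bounded extension $\Lambda_b$. Because $\sum_{i=1}^n (z_i - b_i) p_i$ is a polynomial vanishing at $b$, it is sent to $0$ by $\Lambda_b$, while $\Lambda_b(1) = 1$ since $1$ is the constant polynomial evaluating to $1$ at $b$. Combined with the norm estimate above, this yields $1 = |\Lambda_b(1)| \leq \|\Lambda_b\| \cdot (\epsilon/\|\Lambda_b\|) = \epsilon$, contradicting the arbitrariness of $\epsilon$. I do not foresee any substantive obstacle: the proposition is essentially a repackaging of the norm estimate already carried out in Theorem \ref{fundamental_P}, and the only marginal nuisance is the degenerate case where some $S_i - b_i I$ has zero norm, which forces $\mathcal{X}$ to be trivial and can be handled separately or ruled out at the outset.
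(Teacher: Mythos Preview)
Your proposal is correct and follows exactly the paper's own approach: the paper does not give a separate proof of Proposition~\ref{spectrum_lemma} but simply records it as the last paragraph of the proof of Theorem~\ref{fundamental_P}, which is precisely the contradiction argument you have reproduced (with $a$ replaced by $\Lambda_b(1)=1$). The degenerate case $\|S_i-b_iI\|=0$ you flag cannot occur, since it would force $z_i\equiv b_i$ on the open set $D$.
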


\begin{example} \label{Hardy_envelope_1}

In the case when $\mathcal{X} = H^p(\mathbb{D}^n)$, for some $1 \leq p < \infty$, it is easy to check that $\sigma_r(S) = \overline{\mathbb{D}^n}$. So, $b$ obtained in \emph{Theorem \ref{fundamental_P}} lies in $\overline{\mathbb{D}^n}$. We claim that in this case, $b$ lies in $\mathbb{D}^n$. For the sake of argument, assume that $b = (b_i)_{i = 1}^n \in \partial \mathbb{D}^n$ with $b_j \in \mathbb{T}$ for some $1 \leq j \leq n$. Consider $q(z) :=  z_j - b_j$. Since $z - \beta$ is cyclic in $H^p(\mathbb{D})$ for all $1 \leq p < \infty$ and $\beta \not \in \mathbb{D}$, $q$ is a cyclic polynomial in $H^p(\mathbb{D}^n)$ for all values of $p$. This means that for any given $f \in H^p(\mathbb{D}^n)$, there exist polynomials $\{q_k\}_{k \in \mathbb{N}}$ such that $q_k q \xrightarrow{} f$. Note that $$\Lambda(q_k q) = a \cdot q_k(b)q(b) = 0$$ for every $k \in \mathbb{N}$ since $q(b) = 0$. Thus, $\Lambda(f) = 0$ which implies $\Lambda \equiv 0$, a contradiction. So, $b \in \mathbb{D}^n$ in this case and $\Lambda \equiv a \Lambda_b$.

\end{example}

We can make a similar argument for spaces $\mathcal{X}$ that have an \emph{envelope of cyclic polynomials over $D$}. Recall that $f \in \mathcal{X}$ is cyclic if $$\text{S}[f] := \overline{\text{span}} \{ z^{\alpha}f(z) \; | \; \alpha \in \mathbb{Z}^{+}(n) \} = \overline{\text{span}} \{ pf \; | \; p \in \mathcal{P} \} = \mathcal{X}.$$ Equivalently, $f \in \mathcal{X}$ is cyclic if and only if $1 \in \text{S}[f]$, since polynomials form a dense subspace of $\mathcal{X}$. It is also easy to see that all cyclic functions are non-vanishing.

\begin{definition} \label{definition_envelope}

We say that $\mathcal{X}$ has an envelope of cyclic polynomials over $D$ if there is a family $\mathcal{F} \subset \mathcal{P}$ of cyclic polynomials such that $$\underset{q \in \mathcal{F}}{\bigcap} \big ( \mathbb{C}^{n} \setminus \mathcal{Z}(q) \big ) \subseteq D$$ where $\mathcal{Z}(q)$ is the zero-set of $q$.

\end{definition}

\begin{proposition}

Suppose $\mathcal{X}$ satisfies \textbf{P1}-\textbf{P3} over an open set $D \subset \mathbb{C}^n$, and also has an envelope $\mathcal{F} \subset \mathcal{P}$ of cyclic polynomials. Let $\Lambda \in \mathcal{X}^*$ be such that $\Lambda(e^{w \cdot z}) \neq 0$ for every $w \in \mathbb{C}^n$. Then there exist $a \in \mathbb{C} \setminus \{ 0 \}$ and $b \in D$ such that $\Lambda f = a \cdot f(b)$ for all $f \in \mathcal{X}$.

\end{proposition}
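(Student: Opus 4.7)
The plan is to apply \emph{Theorem \ref{fundamental_P}} to obtain the desired representation on polynomials, then use the envelope hypothesis to force $b$ into the open set $D$, and finally extend to all of $\mathcal{X}$ by density. This essentially abstracts the argument given in \emph{Example \ref{Hardy_envelope_1}}.

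First, I would invoke \emph{Theorem \ref{fundamental_P}} directly: since $\mathcal{X}$ satisfies \textbf{P1}--\textbf{P3} and $\Lambda(e^{w \cdot z}) \neq 0$ for every $w \in \mathbb{C}^n$, there exist $a \in \mathbb{C} \setminus \{0\}$ and $b \in \sigma_r(S)$ with $\Lambda p = a \cdot p(b)$ for every $p \in \mathcal{P}$. Note that $\Lambda(1) = a \neq 0$, so $\Lambda$ is not the zero functional. It remains to show $b \in D$ and to extend the representation from $\mathcal{P}$ to $\mathcal{X}$.

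Next, I would argue by contradiction that $b \in D$. Suppose $b \notin D$. By the definition of an envelope of cyclic polynomials, $b \notin \bigcap_{q \in \mathcal{F}}(\mathbb{C}^n \setminus \mathcal{Z}(q))$, so there exists some $q \in \mathcal{F}$ with $q(b) = 0$. Since $q$ is cyclic in $\mathcal{X}$, we have $\mathrm{S}[q] = \mathcal{X}$, i.e.\ for any $f \in \mathcal{X}$ there is a sequence $\{q_k\} \subset \mathcal{P}$ with $q_k q \to f$ in $\mathcal{X}$. Using the representation on polynomials and continuity of $\Lambda$,
\[
\Lambda f = \lim_{k \to \infty} \Lambda(q_k q) = \lim_{k \to \infty} a \cdot q_k(b) \cdot q(b) = 0,
\]
so $\Lambda \equiv 0$, contradicting $\Lambda(1) = a \neq 0$. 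Hence $b \in D$.

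Finally, once $b \in D$, property \textbf{P2} guarantees that $\Lambda_b : \mathcal{X} \to \mathbb{C}$ defined by $\Lambda_b(f) := f(b)$ is a bounded linear functional. The two bounded linear functionals $\Lambda$ and $a \cdot \Lambda_b$ agree on $\mathcal{P}$ by the previous step, and $\mathcal{P}$ is dense in $\mathcal{X}$ by \textbf{P1}, so $\Lambda = a \cdot \Lambda_b$ on all of $\mathcal{X}$, which is the claim. The only real point that requires thought is Step~2, where the cyclicity of the envelope polynomial at a putative ``bad'' $b \notin D$ allows one to approximate arbitrary $f \in \mathcal{X}$ by multiples of $q$; everything else is a direct appeal to \emph{Theorem \ref{fundamental_P}} and continuity/density.
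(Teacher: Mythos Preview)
Your proof is correct and follows essentially the same approach as the paper: invoke \emph{Theorem \ref{fundamental_P}} to get $\Lambda p = a\cdot p(b)$ on polynomials, use the envelope hypothesis and cyclicity to force $b\in D$ via the contradiction argument of \emph{Example \ref{Hardy_envelope_1}}, and then extend by density and \textbf{P2}. The only cosmetic difference is that the paper phrases the middle step as showing $q(b)\neq 0$ for every $q\in\mathcal{F}$ (hence $b\in\bigcap_{q\in\mathcal{F}}(\mathbb{C}^n\setminus\mathcal{Z}(q))\subseteq D$), while you argue the contrapositive; the content is identical.
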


\begin{proof}

Getting $a \in \mathbb{C} \setminus \{ 0 \}$ and $b \in \mathbb{C}^n$ such that $\Lambda(p) = a \cdot p(b)$ for every $p \in \mathcal{P}$ is the same as that in \emph{Theorem \ref{fundamental_P}}. We only need to show that $b \in D$ since in that case, $\Lambda \equiv a \Lambda_b$ on $\mathcal{X}$. For this, let $q \in \mathcal{F}$ be arbitrary and suppose $q(b) = 0$. Since $q$ is cyclic, for every $f \in \mathcal{X}$ we obtain a sequence of polynomials $\{q_k\}_{k \in \mathbb{N}}$ such that $q_k q \xrightarrow{} f$. This means that $$\Lambda(q_k q) = a \cdot q_k(b) q(b) = 0 \xrightarrow{} \Lambda(f)$$ which implies $\Lambda \equiv 0$. A contradiction. Therefore $q(b) \neq 0$ for every $q \in \mathcal{F}$ and thus, $$b \in \underset{q \in \mathcal{F}}{\bigcap} \big ( \mathbb{C}^{n} \setminus \mathcal{Z}(q) \big ) \subseteq D$$ as required. \qedhere

\end{proof}

\begin{example} \label{example_Hardy_envelope}

In the case of Hardy spaces $H^p(\mathbb{D}^n)$ for $1 \leq p < \infty$, we saw in \emph{Example \ref{Hardy_envelope_1}} that the family $\mathcal{F} := \{ z_i - \beta \; \big \rvert \; 1 \leq i \leq n \text{ and } \beta \not \in \mathbb{D} \}$ is an envelope of cyclic polynomials over $\mathbb{D}^n$. The same set of polynomials also works for the Dirichlet-type spaces $\mathcal{D}_{\alpha}$ whenever $\alpha \leq 1$.

\vspace{1.5mm}

\noindent For $\alpha > 1$ and $1 \leq i \leq n$, the polynomial $z_i - w$ with $w \in \mathbb{T}$ is not cyclic in $\mathcal{D}_{\alpha}$, and hence the same example does not work. In fact, every $f \in \mathcal{D_{\alpha}}$ is continuous up to the boundary when $\alpha > 1$.

\vspace{1.5mm}

\noindent In fact $\Lambda_b$ is a bounded linear functional on $\mathcal{D}_{\alpha}$ even when $b \in \partial \mathbb{D}^n$. Thus $\mathcal{D}_{\alpha}$ cannot have an envelope of cyclic polynomials over $\mathbb{D}^n$. Detailed discussion on cyclicity of polynomials in the Dirichlet-type spaces can be found in \cite{beneteau2016cyclic}.

\vspace{1.5mm}

\noindent Even in the case of $\mathcal{D}_{\alpha}$ when $\alpha > 1$, note that the converse of \emph{Theorem \ref{fundamental_P}} is true if we consider all $b \in \overline{\mathbb{D}^n}$. However, it is not obvious at all if it is the case for every space $\mathcal{X}$, since $b \in \sigma_r(S)$ could be arbitrary. We examine this behaviour with the help of maximal domains.

\end{example}

\section{Maximal domains}

Let us try to make sense of how big the domain of functions in a general space $\mathcal{X}$ that satisfies properties \textbf{P1}-\textbf{P3} can become without losing the structure we need.

\begin{definition}

Given $\mathcal{X}$ satisfying \textbf{P1-P3} over an open set $D \subset \mathbb{C}^n$, we define the maximal domain of functions in $\mathcal{X}$ to be the set $\hat{D} := \big \{ w \in \mathbb{C}^n \; \big \rvert \; \Lambda_w p := p(w), \; \forall p \in \mathcal{P} \text{ has a bounded linear extension to } \mathcal{X} \big \}$.

\end{definition}

\noindent By \textbf{P2}, we know that $D \subset \hat{D}$. By \emph{Proposition \ref{spectrum_lemma}}, we know that $\hat{D} \subset \sigma_r(S)$. In the case of $H^p(\mathbb{D}^n)$ for $1 \leq p < \infty$ and $\mathcal{D}_{\alpha}$ for $\alpha \leq 1$, we saw in \emph{Example \ref{example_Hardy_envelope}} that $\hat{D} = \mathbb{D}^n$. However for $\mathcal{D}_{\alpha}$ when $\alpha > 1$, $\hat{D} = \overline{\mathbb{D}^n}$.

\vspace{1.5mm}

We shall now show that in general, $\mathcal{X}$ can be identified with a space $\hat{\mathcal{X}}$ of functions over $\hat{D}$ which also satisfies some suitable modifications of properties \textbf{P1-P3}. To be precise, we will show that $\mathcal{X}$ is isometrically isomorphic to a space $\hat{\mathcal{X}}$ that satisfies \textbf{Q1}-\textbf{Q3} over $\hat{D}$ defined as follows :

\begin{enumerate}

\item[\textbf{Q1}] The set of polynomials $\mathcal{P}$ is dense in $\hat{\mathcal{X}}$.

\item[\textbf{Q2}] The point-evaluation map $\Lambda_z : \hat{\mathcal{X}} \xrightarrow{} \mathbb{C}$, defined as $\Lambda_z f := f(z)$ for every $f \in \hat{\mathcal{X}}$ is a bounded linear functional on $\hat{\mathcal{X}}$ for every $z \in \hat{D}$. Furthermore, if for some $z \in \mathbb{C}^n$ the map $\Lambda_z p := p(z)$ extends to a bounded linear functional on all of $\hat{\mathcal{X}}$, then $z \in \hat{D}$.

\item[\textbf{Q3}] The $i^{th}$-shift operator $S_i : \hat{\mathcal{X}} \xrightarrow{} \hat{\mathcal{X}}$, defined as $S_i f(z) := z_i f(z)$ for every $(z_i)_{i=1}^n = z \in \hat{D}$ and $f \in \hat{\mathcal{X}}$, is bounded for every $1 \leq i \leq n$.

\end{enumerate}

\noindent \textbf{Q2} represents the maximality of $\hat{D}$ w.r.t bounded linear extensions of point-evaluations on polynomials.

\vspace{1.5mm}

We begin with some notation before proving the identification. For every $f \in \mathcal{X}$, define $\hat{f}(\hat{z}) := \Lambda_{\hat{z}} f$ for every $\hat{z} \in \hat{D}$ where, with the abuse of notation, we write $\Lambda_{\hat{z}} f$ to represent the extension of $\Lambda_{\hat{z}}|_\mathcal{P}$ on $\mathcal{X}$ evaluated at $f$. Notice that for $z \in D$, $\hat{f}(z) = f(z)$ for every $f \in \mathcal{X}$. This also implies that $\hat{f}|_D \in \text{Hol}(D)$.

\vspace{1.5mm}

\noindent For polynomials $\hat{p} \in \hat{\mathcal{P}}$, we have that $\hat{p}(\hat{z}) = p(\hat{z})$ for every $\hat{z} \in \hat{D}$. Note that $\hat{\mathcal{P}}$ is the same set as $\mathcal{P}$ but we use them differently depending on the space in which they are being considered.

\vspace{1.5mm}

Now, let $\hat{X} := \{ \hat{f} : \hat{D} \xrightarrow{} \mathbb{C} \; | \, f \in \mathcal{X} \} $ and endow it with the natural vector space structure of point-wise addition and scalar multiplication. This can be done because it is obvious that $\hat{f} + \hat{g} = \widehat{f+g}$, and $\alpha \hat{f} = \widehat{\alpha f}$ for every $\alpha \in \mathbb{C}$, $f,g \in \mathcal{X}$.

\vspace{1.5mm}

\noindent Define the map $\iota : \mathcal{X} \xrightarrow{} \hat{\mathcal{X}}$ as $\iota(f) := \hat{f}$ for every $f \in \mathcal{X}$. $\iota$ is clearly a vector space isomorphism, and we can define $||\hat{f}||_\mathcal{\hat{X}} := ||f||_\mathcal{X}$ for every $\hat{f} \in \hat{\mathcal{X}}$. This implies $f_k \xrightarrow{} f$ in $\mathcal{X}$ if and only if $\hat{f}_k \xrightarrow{} \hat{f}$ in $\hat{\mathcal{X}}$.

\vspace{1.5mm}

\noindent So, $\hat{\mathcal{X}}$ turns into a Banach space, and $\iota$ becomes an isometric isomorphism of Banach spaces. Note that since $\hat{\mathcal{X}}|_D := \big \{ \hat{f}|_D \, \big \rvert \, \hat{f} \in \hat{\mathcal{X}} \big \} = \mathcal{X}$, we can say that $\hat{\mathcal{X}}$ is an extension of $\mathcal{X}$ to $\hat{D}$.

\vspace{1.5mm}

We now show that $\hat{\mathcal{X}}$ satisfies properties \textbf{Q1}-\textbf{Q3}. We start with properties \textbf{Q1} and \textbf{Q2}.

\begin{proposition} \label{x_hat_properties_1&2}

The space $\hat{\mathcal{X}}$ defined above satisfies \textbf{Q1} and \textbf{Q2}.

\end{proposition}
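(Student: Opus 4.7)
The plan is to exploit the isometric isomorphism $\iota : \mathcal{X} \to \hat{\mathcal{X}}$, $f \mapsto \hat{f}$, that has already been constructed. The key observation is that $\iota$ carries $\mathcal{P}$ bijectively onto $\hat{\mathcal{P}}$ as literal polynomials: for any $p \in \mathcal{P}$ and $\hat{z} \in \hat{D}$, we have $\hat{p}(\hat{z}) = \Lambda_{\hat{z}}(p) = p(\hat{z})$, so $\iota(p)$ is just the polynomial $p$ viewed as a function on $\hat{D}$. Both \textbf{Q1} and \textbf{Q2} will follow by transferring \textbf{P1}--\textbf{P2} along $\iota$, combined with an unwinding of the definition of $\hat{D}$.

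For \textbf{Q1}, I would argue that since $\mathcal{P}$ is dense in $\mathcal{X}$ by \textbf{P1} and $\iota$ is a surjective isometry, the image $\iota(\mathcal{P}) = \hat{\mathcal{P}}$ is dense in $\hat{\mathcal{X}}$.

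For \textbf{Q2}, the first assertion requires showing that $\hat{f} \mapsto \hat{f}(z)$ is bounded on $\hat{\mathcal{X}}$ for each $z \in \hat{D}$. By definition of $\hat{D}$, there is a bounded functional $\tilde{\Lambda}_z \in \mathcal{X}^*$ extending $p \mapsto p(z)$ from $\mathcal{P}$; pulling back via $\iota$, I would set $\mu_z := \tilde{\Lambda}_z \circ \iota^{-1} \in \hat{\mathcal{X}}^*$ and verify directly that $\mu_z(\hat{f}) = \tilde{\Lambda}_z(f) = \hat{f}(z)$ by appealing to the very definition $\hat{f}(z) := \Lambda_z f$.

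The maximality half of \textbf{Q2} is the only part that requires a bit of care, and I would prove it by the direct implication rather than contrapositive: if $z \in \mathbb{C}^n$ and $\Lambda_z p = p(z)$ extends to a bounded $\nu \in \hat{\mathcal{X}}^*$, then $\nu \circ \iota \in \mathcal{X}^*$ agrees with $p \mapsto p(z)$ on $\mathcal{P}$ (using the identification $\iota(p) = \hat{p}$), which forces $z \in \hat{D}$ by the definition of the maximal domain. The main obstacle, if any, is purely bookkeeping: one must keep straight which copy of the polynomials (living in $\mathcal{X}$ versus in $\hat{\mathcal{X}}$) is being referred to at each step, and ensure that the extensions involved are unique. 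The latter is automatic since $\mathcal{P}$ is dense in $\mathcal{X}$ by \textbf{P1}, and $\hat{\mathcal{P}}$ is dense in $\hat{\mathcal{X}}$ by the proof of \textbf{Q1} just carried out.
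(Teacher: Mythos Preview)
Your proposal is correct and takes essentially the same approach as the paper. The only difference is presentational: you phrase everything in terms of the isometric isomorphism $\iota$ and pullbacks/pushforwards of functionals along it, whereas the paper writes out the corresponding norm inequalities explicitly (e.g.\ $|\hat{f}(\hat{z})| = |\Lambda_{\hat{z}} f| \leq \|\Lambda_{\hat{z}}\|_{\mathcal{X}^*}\|f\| = \|\Lambda_{\hat{z}}\|_{\mathcal{X}^*}\|\hat{f}\|$), but the underlying argument is identical in each of the three parts.
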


\begin{proof}

In order to show \textbf{Q1}, first recall that $f_k \xrightarrow{} f$ in $\mathcal{X}$ if and only if $\hat{f}_k \xrightarrow{} \hat{f}$ in $\hat{\mathcal{X}}$. Since $\mathcal{P}$ is dense in $\mathcal{X}$ by \textbf{P1}, it implies easily that the set of polynomials $\hat{\mathcal{P}}$ is dense in $\hat{\mathcal{X}}$.

\vspace{1.5mm}

\noindent In order to show \textbf{Q2}, notice that the map $\Lambda_{\hat{z}} \hat{f} := \hat{f}(\hat{z})$ is bounded for every $\hat{z} \in \hat{D}$ since $$|\Lambda_{\hat{z}} \hat{f}| = |\hat{f}(\hat{z})| = |\Lambda_{\hat{z}}f| \leq ||\Lambda_{\hat{z}}||_{\mathcal{X}^*}||f|| = ||\Lambda_{\hat{z}}||_{\mathcal{X}^*}||\hat{f}||.$$

\noindent For the second part of \textbf{Q2}, suppose for some $\hat{z} \in \mathbb{C}^n$, $\Lambda_{\hat{z}}$ defined as above extends to all of $\hat{\mathcal{X}}$. As $\mathcal{P}$ and $\hat{\mathcal{P}}$ are identical, we can evaluate $\Lambda_{\hat{z}}$ on polynomials in $\mathcal{P}$ to get $$|\Lambda_{\hat{z}} p| = |p(\hat{z})| = |\hat{p}(\hat{z})| \leq ||\Lambda_{\hat{z}}||_{\hat{\mathcal{X}}^*} ||\hat{p}|| \leq ||\Lambda_{\hat{z}}||_{\hat{\mathcal{X}}^*} ||p||$$ where $||p||$ in the last inequality is the norm in $\mathcal{X}$. As $\mathcal{P}$ is dense in $\mathcal{X}$, $\Lambda_{\hat{z}}$ extends to a bounded linear functional on all of $\mathcal{X}$. By construction of $\hat{D}$, this means that $\hat{z} \in \hat{D}$. \qedhere

\end{proof}

\noindent Instead of showing that $\hat{\mathcal{X}}$ satisfies \textbf{Q3} directly, we will prove a general result about multipliers which will be useful later when we consider partially multiplicative linear functionals. Recall that $\phi \in H^{\infty}(D)$ is a multiplier in $\mathcal{X}$ if $\phi f \in \mathcal{X}$ for every $f \in \mathcal{X}$. As $1 \in \mathcal{X}$, we get that $\mathcal{M}(\mathcal{X}) \subset \mathcal{X}$.

\vspace{1.5mm}

\noindent Equivalently, $\phi \in H^{\infty}(D)$ is a multiplier if multiplication by $\phi$, i.e. $M_{\phi} : \mathcal{X} \xrightarrow{} \mathcal{X}$ defined as $M_{\phi} f = \phi f$ for every $f \in \mathcal{X}$, is a bounded linear operator on $\mathcal{X}$. The set of all multipliers in $\mathcal{X}$ forms a sub-algebra of $H^{\infty}(D)$, and is denoted by $\mathcal{M}(\mathcal{X})$. With this notation, we have the following result.

\begin{proposition}

For every $\phi \in \mathcal{M}(\mathcal{X})$, $\hat{\phi}$ is a multiplier in $\hat{\mathcal{X}}$. Conversely, every multiplier $\hat{\phi}$ in $\hat{\mathcal{X}}$ can be identified with some $\phi \in \mathcal{M}(\mathcal{X})$.

\end{proposition}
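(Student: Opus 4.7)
The plan is to use the isometric isomorphism $\iota : \mathcal{X} \to \hat{\mathcal{X}}$, $f \mapsto \hat{f}$, to transport multiplication operators back and forth, with the central issue being whether the transported operator genuinely acts as pointwise multiplication by $\hat{\phi}$ on all of $\hat{D}$. For the forward direction, given $\phi \in \mathcal{M}(\mathcal{X})$, I first note that $\phi = \phi \cdot 1 \in \mathcal{X}$, so $\hat{\phi}$ is a well-defined function on $\hat{D}$. The operator $T := \iota \circ M_\phi \circ \iota^{-1}$ is bounded on $\hat{\mathcal{X}}$ by construction, with $T(\hat{f}) = \widehat{\phi f}$; the task is to show $\widehat{\phi f} = \hat{\phi}\cdot \hat{f}$ pointwise on $\hat{D}$.

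To establish this identity, I would carry out a double approximation. First, fix $\hat{z} \in \hat{D}$ and observe that $\Lambda_{\hat{z}}$ is multiplicative on $\mathcal{P}$ since it agrees with ordinary evaluation at $\hat{z}$ there. Next, since polynomials are finite products of shifts (and scalars), \textbf{P3} gives that $M_p$ is bounded for every $p \in \mathcal{P}$; therefore, picking $q_k \to \phi$ in $\mathcal{X}$ via \textbf{P1}, we get $q_k p \to \phi p$ in $\mathcal{X}$ for each polynomial $p$, whence continuity of $\Lambda_{\hat{z}}$ and multiplicativity on polynomials give
\[
\Lambda_{\hat{z}}(\phi p) = \lim_{k \to \infty} \Lambda_{\hat{z}}(q_k)\Lambda_{\hat{z}}(p) = \hat{\phi}(\hat{z})\,p(\hat{z}).
\]
Then for general $f \in \mathcal{X}$, approximate $f$ by polynomials $p_k \to f$ in $\mathcal{X}$; boundedness of $M_\phi$ gives $\phi p_k \to \phi f$ in $\mathcal{X}$, and continuity of $\Lambda_{\hat{z}}$ upgrades the above identity to $\widehat{\phi f}(\hat{z}) = \hat{\phi}(\hat{z})\hat{f}(\hat{z})$. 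Hence $M_{\hat{\phi}} = \iota \circ M_\phi \circ \iota^{-1}$ is bounded on $\hat{\mathcal{X}}$ with norm equal to $\lVert M_\phi \rVert$.

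For the converse, suppose $\hat{\phi}$ is a multiplier in $\hat{\mathcal{X}}$; in particular $\hat{\phi} = \hat{\phi} \cdot \hat{1} \in \hat{\mathcal{X}}$, so $\hat{\phi} = \iota(\phi)$ for a unique $\phi \in \mathcal{X}$, and restricting to $D \subset \hat{D}$ gives $\phi = \hat{\phi}|_D$. For any $g \in \mathcal{X}$, the hypothesis yields $\hat{\phi}\,\hat{g} \in \hat{\mathcal{X}}$, i.e., $\hat{\phi}\,\hat{g} = \hat{h}$ for a unique $h \in \mathcal{X}$; restricting to $D$ shows $h = \phi g$ pointwise on $D$, so $\phi g \in \mathcal{X}$. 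Thus $\phi \in \mathcal{M}(\mathcal{X})$, and by the forward direction the extension of this $\phi$ recovers the original $\hat{\phi}$. The only delicate point is the pointwise identity $\widehat{\phi f} = \hat{\phi}\,\hat{f}$ in the first part, which is where the multiplicativity of $\Lambda_{\hat{z}}$ on polynomials, combined with \textbf{P1} and \textbf{P3}, does the real work.
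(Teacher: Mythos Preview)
Your proof is correct and follows essentially the same approach as the paper: both establish the pointwise identity $\widehat{\phi f}(\hat{z}) = \hat{\phi}(\hat{z})\hat{f}(\hat{z})$ via a two-step polynomial approximation (first reducing one factor to a polynomial using multiplicativity of $\Lambda_{\hat z}$ on $\mathcal{P}$ and density, then the other using boundedness of the relevant multiplication operator), and the converse is handled identically by restriction to $D$. Your transport framing $M_{\hat{\phi}} = \iota \circ M_\phi \circ \iota^{-1}$ is a clean way to package the norm equality, but the substance of the argument matches the paper's.
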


\begin{proof}

First, notice that for every choice of polynomials $p$ and $q$, we have that $\widehat{pq} = \hat{p} \hat{q}$. Let $f \in \mathcal{X}$ be arbitrary, and let $\{q_k\}_{k \in \mathbb{N}}$ be a sequence of polynomials that converges to $f$ in $\mathcal{X}$. Then for every $\hat{z} \in \hat{D}$ $$\widehat{pf}(\hat{z}) = \underset{k \xrightarrow{} \infty}{\lim} \widehat{p q_k}(\hat{z}) = \underset{k \xrightarrow{} \infty}{\lim} \hat{p}(\hat{z}) \hat{q}_k(\hat{z}) = \hat{p}(\hat{z}) \underset{k \xrightarrow{} \infty}{\lim} \hat{q}_k(\hat{z}) = \hat{p}(\hat{z}) \hat{f}(\hat{z}).$$ The first equality uses the fact that $p q_k \xrightarrow{} p f \Rightarrow \widehat{pq_k} \xrightarrow{} \widehat{pf}$. Thus $\widehat{pf} = \hat{p} \hat{f}$ for every $p \in \mathcal{P}$, $f \in \mathcal{X}$. This implies $\hat{p}$ is a multiplier in $\hat{\mathcal{X}}$ since $$||\hat{p} \hat{f}|| = ||\widehat{pf}|| = ||pf|| \leq ||M_p|| \, ||f|| = ||M_p|| \, ||\hat{f}||.$$

\vspace{1.5mm}

Suppose now that $\phi$ is a multiplier in $\mathcal{X}$. We already know $\widehat{\phi q} = \hat{\phi} \hat{q}$ for every $q \in \mathcal{P}$. Let $f \in \mathcal{X}$ be arbitrary, and suppose again that $q_k \xrightarrow{} f$ for some polynomials $q_k$. It is now easy to see for every $\hat{z} \in \hat{D}$, $$\widehat{\phi f}(\hat{z}) = \underset{k \xrightarrow{} \infty}{\lim} \widehat{\phi q_k}(\hat{z}) = \underset{k \xrightarrow{} \infty}{\lim} \hat{\phi}(\hat{z}) \hat{q}_k(\hat{z}) = \hat{\phi}(\hat{z}) \underset{k \xrightarrow{} \infty}{\lim} \hat{q}_k(\hat{z}) = \hat{\phi}(\hat{z}) \hat{f}(\hat{z}).$$ Therefore $\widehat{\phi f} = \hat{\phi} \hat{f}$ for every $\phi \in \mathcal{M}(\mathcal{X})$, $f \in \mathcal{X}$ and so, $$||\hat{\phi} \hat{f}|| = ||\widehat{\phi f}|| = ||\phi f|| \leq ||M_\phi|| \, ||f|| = ||M_\phi|| \, ||\hat{f}||.$$ This proves that $\hat{\phi} \in \mathcal{M}(\hat{\mathcal{X}})$ whenever $\phi \in \mathcal{M}(\mathcal{X})$.

\vspace{1.5mm}

\noindent The converse is easy since $\hat{\phi} \hat{f} \in \hat{\mathcal{X}}$ implies there exists $g \in \mathcal{X}$ such that $\hat\phi \hat{f} = \hat{g}$. This means $g = \hat{g} \rvert_D = \phi f$ and so, $\phi f \in \mathcal{X}$. Thus $\phi \in \mathcal{M}(\mathcal{X})$ whenever $\hat{\phi} \in \mathcal{M}(\hat{\mathcal{X}})$. \qedhere

\end{proof}

\begin{corollary}

The space $\hat{\mathcal{X}}$ defined above satisfies \textbf{Q3}

\end{corollary}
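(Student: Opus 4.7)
The plan is to derive \textbf{Q3} for $\hat{\mathcal{X}}$ as an immediate consequence of the preceding multiplier proposition, since each coordinate shift is nothing other than multiplication by a coordinate polynomial.

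First I would note that on $\mathcal{X}$, the $i$-th shift operator $S_i$ is exactly the multiplication operator $M_{z_i}$ associated to the polynomial $z_i \in \mathcal{P} \subset H^{\infty}(D)$. By \textbf{P3}, $M_{z_i}$ is bounded on $\mathcal{X}$, so $z_i \in \mathcal{M}(\mathcal{X})$ for every $1 \leq i \leq n$.

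Then I would apply the preceding proposition to $\phi = z_i$ to conclude that $\hat{z_i} \in \mathcal{M}(\hat{\mathcal{X}})$. Since $\hat{p}(\hat z) = p(\hat z)$ for every polynomial $p$ and every $\hat z \in \hat D$, we have $\hat{z_i}(\hat z) = \hat z_i$, so the multiplication operator $M_{\hat{z_i}}$ on $\hat{\mathcal{X}}$ acts exactly as $(S_i \hat f)(\hat z) = \hat z_i \hat f(\hat z)$. The proposition also gives the norm estimate $\|M_{\hat{z_i}} \hat f\| = \|\widehat{z_i f}\| = \|z_i f\| \leq \|S_i\|_{\mathcal{X}} \|\hat f\|$, so $S_i$ is bounded on $\hat{\mathcal{X}}$ with the same operator norm as on $\mathcal{X}$.

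There is no real obstacle here: the work has already been absorbed into the multiplier proposition. The only thing to check is the cosmetic point that $\hat{z_i}$, viewed as a function on $\hat D$, coincides with the coordinate function used to define $S_i$ on $\hat{\mathcal{X}}$, which follows from the definition of $\hat p$ for polynomials $p$.
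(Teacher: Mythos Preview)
Your proposal is correct and follows essentially the same approach as the paper: the paper's proof simply observes that the shift operators are multiplication operators and invokes the preceding multiplier proposition. You have spelled out slightly more detail (the identification $\hat{z_i}(\hat z) = \hat z_i$ and the preservation of the operator norm), but the argument is the same.
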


\begin{proof}

As the shift operators are just multiplication operators, \emph{Proposition 3.3} shows that $\hat{\mathcal{X}}$ satisfies \textbf{Q3}. \qedhere

\end{proof}

\noindent Now that the shift operators are bounded, we can talk about cyclic functions in $\hat{\mathcal{X}}$. However, the way we have defined the norm in $\hat{\mathcal{X}}$, it is obvious that $f \in \mathcal{X}$ is cyclic if and only if $\hat{f} \in \hat{\mathcal{X}}$ is cyclic. This and the propositions above prove the following identification theorem.

\begin{theorem}

Given a space $\mathcal{X}$ that satisfies \textbf{P1}-\textbf{P3} over an open set $D \subset \mathbb{C}^n$, there exists a space $\hat{\mathcal{X}}$, consisting of functions defined over the maximal domain $\hat{D} \supseteq D$ of functions in $\mathcal{X}$, that satisfies \textbf{Q1}-\textbf{Q3} and is isometrically isomorphic to $\mathcal{X}$ with the map $\iota(f) := \hat{f},$ for $f \in \mathcal{X}$.

\vspace{1.5mm}

Furthermore $\hat{\mathcal{X}}|_D := \big \{ \hat{f}|_D \, \big \rvert \, \hat{f} \in \hat{\mathcal{X}} \big \} = \mathcal{X}$, and $\hat{\mathcal{X}}$ has the same set of multipliers and cyclic functions as $\mathcal{X}$. That is, $\phi \in \mathcal{M}(\mathcal{X})$ if and only if $\hat{\phi} \in \mathcal{M}(\hat{\mathcal{X}})$, and $f$ is cyclic in $\mathcal{X}$ if and only if $\hat{f}$ is cyclic in $\hat{\mathcal{X}}$.

\end{theorem}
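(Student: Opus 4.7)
The plan is to simply assemble the pieces established earlier in the section. The map $\iota(f) := \hat{f}$ was already defined before Proposition \ref{x_hat_properties_1&2} via $\hat{f}(\hat{z}) := \Lambda_{\hat{z}} f$, where $\Lambda_{\hat{z}}$ denotes the (unique, by \textbf{P1}) bounded linear extension of the polynomial point-evaluation at $\hat{z} \in \hat{D}$ to all of $\mathcal{X}$. Endowing $\hat{\mathcal{X}} := \iota(\mathcal{X})$ with the transported norm $\|\hat{f}\|_{\hat{\mathcal{X}}} := \|f\|_{\mathcal{X}}$ makes $\iota$ an isometric isomorphism of Banach spaces by definition. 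The identity $\hat{\mathcal{X}}|_D = \mathcal{X}$ is immediate: for $z \in D$, the bounded extension of $\Lambda_z|_{\mathcal{P}}$ must coincide with the point evaluation $f \mapsto f(z)$ already guaranteed by \textbf{P2}, since both are bounded functionals agreeing on the dense subspace $\mathcal{P}$.

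Properties \textbf{Q1} and \textbf{Q2} are precisely the content of \emph{Proposition \ref{x_hat_properties_1&2}}, and \textbf{Q3} is the \emph{Corollary} preceding the theorem (itself a consequence of \emph{Proposition 3.3}, applied to the shift multipliers $z_i$). The multiplier identification $\phi \in \mathcal{M}(\mathcal{X}) \iff \hat{\phi} \in \mathcal{M}(\hat{\mathcal{X}})$ is again exactly \emph{Proposition 3.3}. So the only part not already recorded verbatim is the equivalence of cyclicity, which I would dispatch as follows: since $\iota$ is an isometric isomorphism with $\iota(\mathcal{P}) = \hat{\mathcal{P}}$ (as $\hat{p}(\hat{z}) = p(\hat{z})$ for every polynomial $p$ and $\hat{z} \in \hat{D}$), a sequence of polynomials $\{p_k\}$ satisfies $p_k f \to 1$ in $\mathcal{X}$ if and only if $\widehat{p_k f} = \hat{p}_k \hat{f} \to \hat{1}$ in $\hat{\mathcal{X}}$ (using $\widehat{pf} = \hat{p}\hat{f}$ from the proof of Proposition 3.3). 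Thus $1 \in \mathrm{S}[f]$ in $\mathcal{X}$ iff $\hat{1} \in \mathrm{S}[\hat{f}]$ in $\hat{\mathcal{X}}$, which is the desired equivalence.

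There is no real obstacle here: the only point that must not be glossed over is the well-definedness and uniqueness of the extensions $\Lambda_{\hat{z}}$ defining $\hat{f}(\hat{z})$, and the compatibility $\widehat{pf} = \hat{p}\hat{f}$ used repeatedly; both rest on \textbf{P1} (density of polynomials) together with the Hahn--Banach-free observation that any two bounded functionals on $\mathcal{X}$ agreeing on $\mathcal{P}$ must be equal. Once that is invoked, the theorem is a one-line citation of the propositions already proved.
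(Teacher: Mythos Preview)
Your proposal is correct and matches the paper's own approach exactly: the paper presents this theorem as a summary of the preceding propositions (\emph{Proposition \ref{x_hat_properties_1&2}}, \emph{Proposition 3.3}, and its \emph{Corollary}), stating just before it that ``this and the propositions above prove the following identification theorem,'' where ``this'' is the one-line observation that cyclicity is preserved because the norm is transported by $\iota$. If anything, your write-up of the cyclicity equivalence is slightly more explicit than the paper's, which simply declares it obvious.
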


\noindent With the help of \emph{Theorem 3.4}, we can modify \emph{Theorem \ref{fundamental_P}} as follows.

\begin{theorem} \label{fundamental_Q}

Suppose $\mathcal{X}$ satisfies \textbf{Q1}-\textbf{Q3} over a set $\hat{D} \subset \mathbb{C}^n$ for some $n \in \mathbb{N}$. Let $\Lambda \in \mathcal{X}^*$ be such that $\Lambda(e^{w \cdot z}) \neq 0$ for every $w \in \mathbb{C}^n$. Then, there exist $a \in \mathbb{C} \setminus \{0\}$ and $b \in \hat{D}$ such that $\Lambda \equiv a \Lambda_b$.

\end{theorem}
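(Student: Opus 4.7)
The plan is to upgrade Theorem \ref{fundamental_P} using the maximality clause built into \textbf{Q2}. In outline: first recover the polynomial-level formula $\Lambda(p) = a \cdot p(b)$ by replaying the earlier proof in the new setting, then observe that $a^{-1}\Lambda$ is precisely the bounded linear extension of the formal point-evaluation at $b$, so \textbf{Q2} promotes $b$ from $\sigma_r(S)$ into $\hat{D}$ automatically.

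First I would argue that the proofs of Lemma \ref{exponential_existence} and Theorem \ref{fundamental_P} never actually use openness of $D$. Lemma \ref{exponential_existence} only needs the Banach space structure, the shift bound from \textbf{P3}, and continuity of point-evaluation from \textbf{P2} to identify the pointwise limit; and the subsequent argument for the polynomial formula uses only continuity of $\Lambda$, the same shift bound, the non-vanishing entire function theorem, and a polynomial-approximation contradiction for $b \in \sigma_r(S)$ that needs only density of $\mathcal{P}$ and boundedness of shifts. Since \textbf{Q1}-\textbf{Q3} coincide with \textbf{P1}-\textbf{P3} after replacing $D$ by $\hat{D}$ (plus the extra clause in \textbf{Q2}), applying the same proof verbatim yields $a \in \mathbb{C} \setminus \{0\}$ and $b \in \sigma_r(S)$ such that $\Lambda(p) = a \cdot p(b)$ for every $p \in \mathcal{P}$.

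Next I would introduce $\Lambda' := a^{-1}\Lambda \in \mathcal{X}^*$, a bounded linear functional whose restriction to $\mathcal{P}$ is the formal evaluation $p \mapsto p(b)$. Density of polynomials (\textbf{Q1}) then makes $\Lambda'$ a bounded linear extension of $\Lambda_b|_\mathcal{P}$ to all of $\mathcal{X}$, and the maximality clause of \textbf{Q2} immediately forces $b \in \hat{D}$. At this point the first half of \textbf{Q2} guarantees that $\Lambda_b : f \mapsto f(b)$ is itself a continuous linear functional on all of $\mathcal{X}$; since it agrees with $\Lambda'$ on the dense subspace $\mathcal{P}$, continuity gives $\Lambda' \equiv \Lambda_b$, i.e. $\Lambda \equiv a \Lambda_b$, as required.

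The only genuine obstacle is the bookkeeping claim that the earlier proofs of Lemma \ref{exponential_existence} and Theorem \ref{fundamental_P} carry over without any hidden appeal to openness of $D$; once that re-reading is dispatched, the theorem is essentially a tautology of the way \textbf{Q2} was designed. So the only place I would slow down is a careful check of the $\sigma_r(S)$ contradiction step and the pointwise identification $g(z) = e^{w \cdot z}$, to confirm that both go through on the possibly non-open set $\hat{D}$.
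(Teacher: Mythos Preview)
Your proposal is correct and follows essentially the same approach as the paper: replay the proof of Theorem~\ref{fundamental_P} to obtain $\Lambda(p)=a\cdot p(b)$ on polynomials, then invoke the maximality clause of \textbf{Q2} to place $b$ in $\hat{D}$ and conclude by density. The only cosmetic difference is that the paper skips the $\sigma_r(S)$ step entirely, since once $a^{-1}\Lambda$ is seen to be a bounded extension of $p\mapsto p(b)$, \textbf{Q2} gives $b\in\hat{D}$ directly without ever passing through the spectrum.
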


\begin{proof}

The proof of this theorem is the same as that of \emph{Theorem \ref{fundamental_P}} except by property \textbf{Q2}, we directly obtain $b \in \hat{D}$ instead of having to show that $b \in \sigma_r(S)$. \qedhere

\end{proof}

It should be noted that while \emph{Theorem \ref{fundamental_Q}} is not a better result compared to \emph{Theorem \ref{fundamental_P}}, it shows that the point $b$ is not completely arbitrary; in the sense that functions in $\mathcal{X}$ are well-behaved around $b$ and that most of the structure we need can be extended to it.

\vspace{1.5mm}

\noindent In special cases like $\mathcal{D}_{\alpha}$ when $\alpha > 1$, since the maximal domain is known to be different from the domain we started with, we get a better understanding of where the point $b$ lies and that it makes sense to talk about the evaluation at the point $b$ for any function in the space.

\vspace{1.5mm}

We have now finally covered all the preliminaries required to obtain cyclicity preserving operators and also prove a version of the GK\.Z theorem for these spaces.

\section{Cyclicity preserving operators}

First, we prove a generalization of \emph{Theorem \ref{mash_general}} that follows easily from the previous results.

\begin{theorem} \label{weighted_composition_general}

Suppose $\mathcal{X}$ is a space of functions on a set $\hat{D} \subset \mathbb{C}^n$ that are analytic in an open set $D \subset \hat{D}$, and satisfies \textbf{Q1}-\textbf{Q3}. Suppose $\mathcal{Y}$ is a topological vector space of functions on a set $E$ such that $\Gamma_u g := g(u)$, $g \in \mathcal{Y}$ defines a continuous linear functional for all $u \in E$. Let $T : \mathcal{X} \xrightarrow{} \mathcal{Y}$ be a continuous linear operator. Then, the following are equivalent :

\begin{enumerate}

\item[$(1)$] $T(e^{w \cdot z})$ is non-vanishing for every $w \in \mathbb{C}^n$.

\item[$(2)$] $Tf(u) = a(u) f(b(u))$ for some $a \in \mathcal{Y}$ non-vanishing, and a map $b : E \xrightarrow{} \hat{D}$.

\end{enumerate}

In this case, note that $a = T(1)$ and $b = \frac{T(z)}{T(1)}$, where $T(z)$ represents the $n$-tuple of functions $\big (T(z_i) \big )_{i = 1}^n$.

\end{theorem}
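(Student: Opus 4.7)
The plan is to reduce the theorem to \emph{Theorem \ref{fundamental_Q}} by slicing the operator $T$ at each point $u \in E$. The direction $(2) \Rightarrow (1)$ is the easy one: if $Tf(u) = a(u) f(b(u))$ with $a$ non-vanishing and $b : E \to \hat{D}$, then for each $w \in \mathbb{C}^n$ we have $T(e^{w \cdot z})(u) = a(u) e^{w \cdot b(u)}$, which is the product of two non-vanishing functions of $u$, hence non-vanishing on $E$. I would dispense with this direction in a single line at the end.

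For the main direction $(1) \Rightarrow (2)$, the plan is as follows. Fix $u \in E$ and define $\Lambda_u : \mathcal{X} \to \mathbb{C}$ by $\Lambda_u f := \Gamma_u(Tf) = (Tf)(u)$. Since $T : \mathcal{X} \to \mathcal{Y}$ and $\Gamma_u : \mathcal{Y} \to \mathbb{C}$ are both continuous and linear, $\Lambda_u$ is a bounded linear functional on $\mathcal{X}$, i.e.\ $\Lambda_u \in \mathcal{X}^*$. The hypothesis $(1)$ translates directly into $\Lambda_u(e^{w \cdot z}) = T(e^{w \cdot z})(u) \neq 0$ for every $w \in \mathbb{C}^n$. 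Applying \emph{Theorem \ref{fundamental_Q}} to $\Lambda_u$ yields scalars $a(u) \in \mathbb{C} \setminus \{0\}$ and $b(u) \in \hat{D}$ such that $\Lambda_u f = a(u) f(b(u))$ for every $f \in \mathcal{X}$. Unwrapping the definition of $\Lambda_u$ gives $Tf(u) = a(u) f(b(u))$ for every $u \in E$ and every $f \in \mathcal{X}$, which is exactly the desired representation.

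It remains to identify the functions $a$ and $b$ and check they have the claimed regularity. Plugging $f \equiv 1$ into the representation gives $T(1)(u) = a(u)$, so $a = T(1)$ lies in $\mathcal{Y}$ automatically (as the image of the constant $1 \in \mathcal{X}$ under $T$), and by construction $a(u) \neq 0$ for every $u \in E$, so $a$ is non-vanishing. Plugging $f = z_i$ for each $1 \leq i \leq n$ yields $T(z_i)(u) = a(u) b_i(u)$, whence $b_i(u) = T(z_i)(u)/T(1)(u)$; since $T(1)$ is non-vanishing, this is well defined, giving $b = T(z)/T(1)$ as an $n$-tuple of $\mathbb{C}$-valued functions on $E$, and the range of $b$ lies in $\hat{D}$ by the $\hat{D}$-membership assertion from \emph{Theorem \ref{fundamental_Q}}.

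There is no real obstacle here beyond a careful bookkeeping of quantifiers; the heavy lifting was already done in \emph{Theorem \ref{fundamental_Q}}, whose pointwise output, when stitched together over $u \in E$, assembles into the two $E$-parametrised functions $a$ and $b$. The only subtlety worth flagging is that \emph{Theorem \ref{weighted_composition_general}} makes no claim about the regularity of $b$ on $E$ beyond being a map into $\hat{D}$; the identification $b = T(z)/T(1)$ is what would then let one upgrade this to analyticity or boundedness in specific applications, such as the Hardy-space setting of \emph{Theorem \ref{CPO_Hardy_full}}, but that upgrade lies outside the scope of the present statement.
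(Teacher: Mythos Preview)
Your proof is correct and follows essentially the same approach as the paper: slice $T$ at each $u \in E$ via $\Lambda_u = \Gamma_u \circ T \in \mathcal{X}^*$, apply \emph{Theorem \ref{fundamental_Q}}, and then read off $a = T(1)$ and $b = T(z)/T(1)$. If anything, your write-up is slightly more explicit about why $a \in \mathcal{Y}$ and why $b$ is well defined, but the argument is the same.
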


\begin{proof}

$(2) \Rightarrow (1)$ is obvious.

\vspace{1.5mm}

\noindent Suppose now that $(1)$ holds. Fix $u \in E$ and define $\Lambda := \Gamma_u \circ T \in \mathcal{X}^*$. Note that for every $w \in \mathbb{C}^n$, we get $$\Lambda(e^{w \cdot z}) = \Gamma_u(T(e^{w \cdot z})) = T(e^{w \cdot z})(u) \neq 0$$ since $T(e^{w \cdot z})$ is non-vanishing by $(1)$. By \emph{Theorem \ref{fundamental_Q}}, we get that for all $f \in \mathcal{X}$, $\Lambda f = a(u) f(b(u))$ for some $a(u) \in \mathbb{C}$ non-zero, and $b(u) \in \hat{D}$.

\vspace{1.5mm}

\noindent Since the choice of $u \in E$ was arbitrary, we get the desired functions $a = T(1) \in \mathcal{Y}$, $b = \frac{T(z)}{T(1)} : E \xrightarrow{} \hat{D}$, and that $Tf(u) = a(u)f(b(u))$ for every $u \in E$. \qedhere

\end{proof}

We are now ready to identify cyclicity preserving operators. The only thing we require is the following.

\begin{lemma}

$e^{w \cdot z}$ is cyclic in $\mathcal{X}$ for every $w \in \mathbb{C}^n$.

\end{lemma}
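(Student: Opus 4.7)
The plan is to show that $1$ lies in the closed polynomial-submodule $S[e^{w\cdot z}]$ generated by $e^{w\cdot z}$; since polynomials are dense in $\mathcal{X}$ by \textbf{Q1}, this suffices to conclude cyclicity (once $1 \in S[e^{w\cdot z}]$, any polynomial $q$ is the limit of $q\cdot p_k \cdot e^{w\cdot z}$, because multiplication by $q$ is a bounded operator on $\mathcal{X}$ by \textbf{Q3}).

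The key computational idea is to multiply $e^{w\cdot z}$ by the partial sums of the exponential $e^{-w\cdot z}$. Concretely, set
\[
p_k(z) := \sum_{|\alpha|\leq k} \frac{(-w)^{\alpha}\, z^{\alpha}}{\alpha!},
\]
so each $p_k$ is a polynomial and $p_k \cdot e^{w\cdot z} \in S[e^{w\cdot z}]$. Interpreting multiplication by $z^{\alpha}$ as the shift operator $S^{\alpha} := S_1^{\alpha_1}\cdots S_n^{\alpha_n}$, I would rewrite
\[
p_k \cdot e^{w\cdot z} \;=\; \left(\sum_{|\alpha|\leq k} \frac{(-w)^{\alpha}\, S^{\alpha}}{\alpha!}\right) e^{w\cdot z}.
\]
The operator-norm estimate $\|S^{\alpha}\|\leq \|S\|^{\alpha}$ (notation as in Lemma \ref{exponential_existence}) implies that the operators $\sum_{|\alpha|\leq k}\frac{(-w)^{\alpha} S^{\alpha}}{\alpha!}$ converge in $\mathcal{B}(\mathcal{X})$ to a bounded operator $E_{-w}$, since the series is absolutely summable with bound $e^{|w|\cdot\|S\|}$. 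Hence $p_k \cdot e^{w\cdot z}$ converges in $\mathcal{X}$ to some $g := E_{-w}(e^{w\cdot z})$.

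To identify $g$, I would invoke \textbf{Q2}: convergence in $\mathcal{X}$ forces pointwise convergence on $\hat D$. For each $\hat{z}\in\hat{D}$,
\[
(p_k\cdot e^{w\cdot z})(\hat{z}) \;=\; p_k(\hat{z})\, e^{w\cdot \hat{z}} \;\longrightarrow\; e^{-w\cdot \hat{z}}\, e^{w\cdot \hat{z}} \;=\; 1,
\]
so $g\equiv 1$ on $\hat{D}$. Therefore $p_k\cdot e^{w\cdot z} \to 1$ in $\mathcal{X}$, showing $1\in S[e^{w\cdot z}]$. Combined with the first paragraph's observation that $S[e^{w\cdot z}]$ then contains every polynomial and is closed, \textbf{Q1} gives $S[e^{w\cdot z}]=\mathcal{X}$.

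The main subtlety to be careful about is that $e^{w\cdot z}$ is not a priori a multiplier of $\mathcal{X}$, so one cannot simply assert $e^{-w\cdot z}\cdot e^{w\cdot z}=1$ inside $\mathcal{X}$. Going through the shift operators sidesteps this: only the polynomials $p_k$ act as multipliers, and the convergence of the resulting elements in $\mathcal{X}$ is secured by operator-norm convergence, with the identification of the limit done purely at the level of point evaluations via \textbf{Q2}.
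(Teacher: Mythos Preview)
Your proof is correct and follows essentially the same approach as the paper: both arguments take $p_k$ to be the truncations of $e^{-w\cdot z}$, use the operator-norm estimate $\sum_{\alpha}\frac{|w|^{\alpha}\|S\|^{\alpha}}{\alpha!}<\infty$ to get norm convergence of $p_k e^{w\cdot z}$, and identify the limit as $1$ via the bounded point evaluations from \textbf{Q2}. The only cosmetic difference is packaging: the paper first records the intermediate fact that $e^{w\cdot z}\in\mathcal{M}(\mathcal{X})$ (so $M_{e^{w\cdot z}}$ is continuous and $p_k\to e^{-w\cdot z}$ forces $p_k e^{w\cdot z}\to 1$), whereas you bypass the multiplier statement by working directly with the operator $E_{-w}=\sum_{\alpha}\frac{(-w)^{\alpha}S^{\alpha}}{\alpha!}$ applied to $e^{w\cdot z}$.
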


\begin{proof}

Fix $w \in \mathbb{C}^n$. We need to find polynomials $p_k$ so that $||p_k e^{w \cdot z} - 1|| \xrightarrow{} 0$ as $k \xrightarrow{} \infty$. For this, let $p_k$ be the truncations of the power-series of $e^{-w \cdot z}$. We saw before in \emph{Lemma \ref{exponential_existence}} that $p_k$ converges to $e^{-w \cdot z}$ in the norm of $\mathcal{X}$.

\vspace{1.5mm}

\noindent Next, we show that $e^{w \cdot z}$ is a multiplier. Let $q_k$ be the truncations of the power-series of $e^{w \cdot z}$. Given any $f \in \mathcal{X}$, we need to show $e^{w \cdot z} f$ lies in $\mathcal{X}$. Note that by the triangle inequality, we get for every $k \leq l$ that \[ ||q_l f - q_k f|| \leq \bigg ( \displaystyle \sum_{k < |\alpha| \leq l} \frac{|w|^{\alpha} \cdot ||S||^{\alpha} \cdot ||1||}{\alpha !} \bigg ) ||f||. \] Therefore $q_k f$ is a Cauchy sequence and thus, converges to some function $g \in \mathcal{X}$. As $q_k \xrightarrow{} e^{w \cdot z}$ point-wise, by \textbf{P2} we get that $q_k f \xrightarrow{} e^{w \cdot z} f$ which implies $e^{w \cdot z} \in \mathcal{M}(\mathcal{X})$. This means that $M_{e^{w \cdot z}}(e^{-w \cdot z}) = 1$ and thus, $M_{e^{w \cdot z}}(p_k) \xrightarrow{} 1$ as $k \xrightarrow{} \infty$. That is, $p_k e^{w \cdot z} \xrightarrow{} 1$ as $k \xrightarrow{} \infty$. \qedhere

\end{proof}

\noindent With this in mind, the following is a trivial consequence of \emph{Theorem \ref{weighted_composition_general}}.

\begin{theorem} \label{CPO_general_Q}

Suppose $\mathcal{X}$ is a space of functions defined on a set $\hat{D} \subset \mathbb{C}^n$ that are analytic in an open set $D \subset \hat{D}$, and satisfies \textbf{Q1}-\textbf{Q3}. Suppose $\mathcal{Y}$ is a space of analytic functions on an open set $E \subset \mathbb{C}^m$ that satisfies \textbf{P2} and \textbf{P3}. Let $T : \mathcal{X} \xrightarrow{} \mathcal{Y}$ be such that $Tf$ is cyclic whenever $f$ is cyclic. Then, there exist analytic functions $a \in \mathcal{Y}$ and $b : E \xrightarrow{} \hat{D}$ such that $Tf(u) = a(u)f(b(u))$ for every $u \in E$.

\vspace{1.5mm}

Moreover, $a = T(1)$ is cyclic and $b = \frac{T(z)}{T(1)}$ where $T(z)$ is the $n$-tuple of functions $\big (T(z_i) \big )_{i = 1}^n$.

\end{theorem}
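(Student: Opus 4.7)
The plan is to recognize this as an essentially immediate corollary of \emph{Theorem \ref{weighted_composition_general}}, using the preceding lemma as the bridge. That lemma asserts that $e^{w \cdot z}$ is cyclic in $\mathcal{X}$ for every $w \in \mathbb{C}^n$; combined with the hypothesis that $T$ preserves cyclicity, it gives cyclicity of $T(e^{w \cdot z})$ in $\mathcal{Y}$. Upgrading \emph{cyclic} to \emph{non-vanishing} in $\mathcal{Y}$ then verifies condition $(1)$ of \emph{Theorem \ref{weighted_composition_general}}, and the weighted-composition representation falls out.

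First I would check that any cyclic $g \in \mathcal{Y}$ is non-vanishing on $E$. If $g(u_0) = 0$ for some $u_0 \in E$, then $p(u_0) g(u_0) = 0$ for every polynomial $p$; by continuity of $\Gamma_{u_0}$ (property \textbf{P2} on $\mathcal{Y}$), every element of $\overline{\text{span}}\{p g : p \in \mathcal{P}\}$ vanishes at $u_0$. Cyclicity of $g$ would then force $1(u_0) = 0$, a contradiction. Applying this to $g = T(e^{w \cdot z})$ shows that $T(e^{w \cdot z})$ is non-vanishing on $E$ for every $w \in \mathbb{C}^n$.

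Next I would invoke \emph{Theorem \ref{weighted_composition_general}} with $\mathcal{Y}$ playing the role of the target topological vector space (continuity of $\Gamma_u$ on $\mathcal{Y}$ is supplied by \textbf{P2}). It yields a non-vanishing $a \in \mathcal{Y}$ and a map $b : E \to \hat{D}$ satisfying $Tf(u) = a(u) f(b(u))$ for every $f \in \mathcal{X}$ and $u \in E$, with $a = T(1)$ and $b = T(z)/T(1)$. The ``moreover'' clauses then follow: $a = T(1)$ is cyclic because $1$ is cyclic in $\mathcal{X}$ (polynomials are dense by \textbf{Q1}) and $T$ preserves cyclicity; and each component $b_i = T(z_i)/T(1)$ is analytic on $E$ as a quotient of elements of $\mathcal{Y}$ whose denominator is non-vanishing.

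There is no substantive obstacle, since the heavy lifting was done in the preceding lemma and in \emph{Theorem \ref{weighted_composition_general}}. The only subtle point is the cyclic-implies-non-vanishing observation, which is precisely what makes \textbf{P2} the right hypothesis to impose on $\mathcal{Y}$; property \textbf{P3} on $\mathcal{Y}$, meanwhile, is used only to ensure that ``cyclic'' makes sense there in the first place.
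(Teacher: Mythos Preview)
Your proposal is correct and matches the paper's approach exactly: the paper states this theorem immediately after \emph{Lemma 4.2} and simply declares it ``a trivial consequence of \emph{Theorem \ref{weighted_composition_general}}'' without writing out a proof. You have supplied precisely the missing details---that cyclic implies non-vanishing in $\mathcal{Y}$ (an observation the paper records earlier in \emph{Section 2}), and that this feeds condition $(1)$ of \emph{Theorem \ref{weighted_composition_general}}---so your write-up is, if anything, more complete than the paper's.
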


\noindent To see how this translates to spaces $\mathcal{X}$ that satisfy properties \textbf{P1}-\textbf{P3} instead, we get the following immediate consequence of \emph{Theorem \ref{CPO_general_Q}}.

\begin{theorem} \label{CPO_general_P}

Suppose $\mathcal{X}$ is a space of functions on an open set $D \subset \mathbb{C}^n$ that satisfies \textbf{P1}-\textbf{P3}. Suppose $\mathcal{Y}$ is a space of analytic functions on an open set $E \subset \mathbb{C}^m$ that satisfies \textbf{P2} and \textbf{P3}. Let $T : \mathcal{X} \xrightarrow{} \mathcal{Y}$ be such that $Tf$ is cyclic whenever $f$ is cyclic. Then, there exist analytic functions $a \in \mathcal{Y}$ and $b : E \xrightarrow{} \hat{D}$ such that $Tf(u) = a(u)f(b(u))$ for every $u \in E$.

\vspace{1.5mm}

Here, $\hat{D}$ is the maximal domain for functions in $\mathcal{X}$ and with the abuse of notation, $f(b(u)) = \Lambda_{b(u)} \hat{f}$. Moreover, $a = T(1)$ is cyclic and $b = \frac{T(z)}{T(1)}$ where $T(z)$ is the $n$-tuple of functions $\big (T(z_i) \big )_{i = 1}^n$.

\end{theorem}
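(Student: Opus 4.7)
The plan is to deduce this statement from Theorem \ref{CPO_general_Q} by passing to the maximal-domain realization of $\mathcal{X}$. Since $\mathcal{X}$ satisfies \textbf{P1}-\textbf{P3}, Theorem 3.4 supplies an isometric isomorphism $\iota : \mathcal{X} \to \hat{\mathcal{X}}$, $\iota(f) = \hat{f}$, where $\hat{\mathcal{X}}$ is a space of functions on the maximal domain $\hat{D} \supseteq D$ satisfying \textbf{Q1}-\textbf{Q3}, and such that $f \in \mathcal{X}$ is cyclic if and only if $\hat{f} \in \hat{\mathcal{X}}$ is cyclic. Because $\hat{f}|_D = f$ for every $f \in \mathcal{X}$, the elements of $\hat{\mathcal{X}}$ are automatically analytic on $D$, so $\hat{\mathcal{X}}$ meets the hypothesis of Theorem \ref{CPO_general_Q}.

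Next I would transport $T$ to this setting by defining $\hat{T} := T \circ \iota^{-1} : \hat{\mathcal{X}} \to \mathcal{Y}$. This is a continuous linear map, and it preserves cyclicity: if $\hat{g}$ is cyclic in $\hat{\mathcal{X}}$, then $\iota^{-1}(\hat{g})$ is cyclic in $\mathcal{X}$, hence $\hat{T}\hat{g} = T(\iota^{-1}\hat{g})$ is cyclic in $\mathcal{Y}$ by hypothesis on $T$. Applying Theorem \ref{CPO_general_Q} to $\hat{T}$, with source space $\hat{\mathcal{X}}$ on $\hat{D}$ and target $\mathcal{Y}$ on $E$, yields a cyclic $a \in \mathcal{Y}$ and a map $b : E \to \hat{D}$ such that $\hat{T}\hat{f}(u) = a(u)\,\hat{f}(b(u))$ for every $u \in E$, together with the formulas $a = \hat{T}(\hat{1})$ and $b = \hat{T}(\hat{z})/\hat{T}(\hat{1})$.

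To finish, I would translate back through $\iota$. Because $\iota(1) = \hat{1}$ and $\iota(z_i) = \hat{z}_i$ for each $i$, these identifications become $a = T(1)$ and $b = T(z)/T(1)$. For every $f \in \mathcal{X}$ and $u \in E$ we then have
$$ Tf(u) = \hat{T}(\hat{f})(u) = a(u)\,\hat{f}(b(u)) = a(u)\,\Lambda_{b(u)}\hat{f}, $$
which is the claimed weighted composition formula under the abuse of notation $f(b(u)) := \Lambda_{b(u)}\hat{f}$ spelled out in the statement.

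I do not expect any substantive difficulty beyond bookkeeping, since Theorem 3.4 transports polynomials, convergence, shift-invariance, and hence cyclicity between $\mathcal{X}$ and $\hat{\mathcal{X}}$ on the nose. The one conceptual point worth flagging is that $b$ takes values in $\hat{D}$ rather than $D$; this is unavoidable, and it is precisely the reason the maximal-domain formalism of Section 3 was introduced. The maximality encoded in property \textbf{Q2} guarantees that each $b(u)$ is a legitimate point for evaluating elements of $\hat{\mathcal{X}}$, which is what makes the abuse of notation $f(b(u)) := \Lambda_{b(u)}\hat{f}$ well-defined.
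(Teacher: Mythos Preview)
Your proposal is correct and matches the paper's approach exactly: the paper states Theorem \ref{CPO_general_P} as ``the following immediate consequence of \emph{Theorem \ref{CPO_general_Q}}'' without writing a separate proof, and what you have written is precisely the bookkeeping that makes this immediate—transport $\mathcal{X}$ to its maximal-domain realization $\hat{\mathcal{X}}$ via Theorem 3.4, apply Theorem \ref{CPO_general_Q} to $\hat{T} = T \circ \iota^{-1}$, and translate back. There is nothing to add.
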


\noindent One can immediately observe in \emph{Theorems \ref{weighted_composition_general}, \ref{CPO_general_Q}} and \emph{\ref{CPO_general_P}} that the spaces $\mathcal{X}$ and $\mathcal{Y}$ may be defined for functions in different number of variables.

\vspace{1.5mm}

\noindent Note that for \emph{Theorems \ref{CPO_general_Q}} and \emph{\ref{CPO_general_P}}, we do not get a proper equivalence easily as in \emph{Theorem \ref{weighted_composition_general}} since it is not at all trivial to determine when a weighted composition operator preserves cyclicity.

\vspace{1.5mm}

In the case when $\mathcal{X} = H^p(\mathbb{D}^n)$ and $\mathcal{Y} = H^q(\mathbb{D}^m)$ for some $1 \leq p < \infty$, we get that all operators that preserve cyclicity are necessarily weighted composition operators. The same is true for the Dirichlet-type spaces $\mathcal{D}_{\alpha}$ when $\alpha \leq 1$. For $\mathcal{X} = \mathcal{D}_{\alpha}$ when $\alpha > 1$, we need to consider the space over its maximal domain $\hat{D} = \overline{\mathbb{D}^n}$. Then, all operators that preserve cyclicity will necessarily be weighted composition operators.

\vspace{1.5mm}

We now discuss the case of Hardy spaces in detail.

\subsection{Cyclicity preserving operators on Hardy spaces}

\noindent From \emph{Theorem 2} in \cite{kou2017linear}, we know that in the case of $H^p(\mathbb{D})$ for $1 < p < \infty$, the converse of \emph{Theorem \ref{CPO_general_Q}} is true. That is, all bounded weighted composition operators on $H^p(\mathbb{D})$ also preserve cyclicity. In the case when $n > 1$, it is not obvious if all bounded weighted composition operators preserve cyclicity. We will show that the converse of \emph{Theorem \ref{CPO_general_Q}} is true whenever $\mathcal{Y} = H^q(\mathbb{D}^m)$ for some $1 \leq q < \infty$.  We need the following important properties of S$[f]$, the shift-invariant subspace generated by a function $f \in H^p(\mathbb{D}^n)$.

\begin{lemma}

Let $f \in H^p(\mathbb{D}^n)$ for some $1 \leq p < \infty$. Then, $\phi f \in \text{\emph{S}}[f]$ for each $\phi \in H^{\infty}(\mathbb{D}^n)$.

\end{lemma}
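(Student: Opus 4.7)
The plan is to approximate $\phi f$ in $H^p$ by elements of $\text{span}\{z^\alpha f : \alpha \in \mathbb{Z}^+(n)\}$, that is, by products $p_k f$ with $p_k \in \mathcal{P}$; since $\text{S}[f]$ is by definition the $H^p$-closure of this span, any such approximation immediately gives $\phi f \in \text{S}[f]$. I would carry this out in two stages: first, for each dilation parameter $0 < r < 1$, show that $\phi_r f \in \text{S}[f]$, where $\phi_r(z) := \phi(rz)$; second, let $r \to 1^-$ and show $\phi_r f \to \phi f$ in $H^p$, then invoke closedness of $\text{S}[f]$.

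The first stage is elementary. Fix $r < 1$; then $\phi_r$ extends holomorphically to a neighborhood of $\overline{\mathbb{D}^n}$, so its Taylor polynomial truncations $T_k(\phi_r) \in \mathcal{P}$ converge to $\phi_r$ uniformly on $\overline{\mathbb{D}^n}$. Multiplication by $f$ then gives
$$\|T_k(\phi_r) f - \phi_r f\|_{H^p} \leq \|T_k(\phi_r) - \phi_r\|_\infty \, \|f\|_{H^p} \to 0,$$
placing $\phi_r f$ in $\text{S}[f]$.

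The second stage uses boundary values. For $1 \leq p < \infty$, $H^p(\mathbb{D}^n)$ embeds isometrically into $L^p(\mathbb{T}^n)$ via the boundary-value map $f \mapsto f^*$, so
$$\|\phi_r f - \phi f\|_{H^p}^p = \int_{\mathbb{T}^n} |\phi(rw) - \phi^*(w)|^p \, |f^*(w)|^p \, d\sigma_n(w).$$
The integrand is dominated by $(2\|\phi\|_\infty)^p |f^*|^p$, which is integrable because $f \in H^p$. The main — and essentially only non-elementary — obstacle is justifying the almost-everywhere pointwise convergence $\phi(rw) \to \phi^*(w)$ on the distinguished boundary $\mathbb{T}^n$ as $r \to 1^-$. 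This is the classical radial boundary limit theorem for $H^\infty$ functions on the polydisc, due to Rudin, and I would simply cite it. Once it is in hand, the dominated convergence theorem yields $\phi_r f \to \phi f$ in $H^p$, and combined with the first stage and closedness of $\text{S}[f]$ this completes the argument.
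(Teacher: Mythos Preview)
Your proof is correct, but it takes a different route from the paper's. The paper argues by contradiction via Hahn--Banach: if $\phi f \notin \text{S}[f]$, choose a separating functional $\Gamma$, represent it (using $L^p$--$L^{p'}$ duality) as integration against some $h \in L^{p'}(\mathbb{T}^n)$, and then observe that since $f\overline{h} \in L^1(\mathbb{T}^n)$ and the Fej\'er means $p_k$ of $\phi$ converge to $\phi$ in the weak-$*$ topology of $L^\infty$, one gets $\Gamma(\phi f) = \lim_k \int p_k f\,\overline{h} = 0$, a contradiction.

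Your argument is instead a direct, constructive approximation: dilate $\phi$ to $\phi_r$, use uniform Taylor approximation of $\phi_r$ on $\overline{\mathbb{D}^n}$ to place $\phi_r f$ in $\text{S}[f]$, and then pass to the limit $r \to 1^-$ via dominated convergence on the distinguished boundary. Both proofs ultimately rest on the same infrastructure---the isometric embedding $H^p \hookrightarrow L^p(\mathbb{T}^n)$ and the a.e.\ existence of boundary values---but they deploy it differently. Your approach avoids Hahn--Banach and duality entirely, at the cost of invoking the radial-limit theorem explicitly; the paper's approach is shorter but less explicit about \emph{which} polynomials do the approximating. Either is perfectly acceptable, and your version has the mild pedagogical advantage of producing concrete approximants $T_k(\phi_r)\,f$ rather than relying on an abstract separating functional.
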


\begin{proof}

For the sake of contradiction, let $\phi f \not \in \text{S}[f]$. By the Hahn-Banach theorem, there exists $\Gamma \in (H^p(\mathbb{D}^n))^*$ such that $\Gamma(\phi f) \neq 0$ and $\Gamma |_{\text{S}[f]} \equiv 0$. Since $H^p(\mathbb{D}^n) \subset L^p(\mathbb{T}^n)$ is a closed subspace, by duality of $L^p(\mathbb{T}^n)$ there exists $h \in L^{p'}(\mathbb{T}^n)$ such that $\Gamma(g) = \int_{\mathbb{T}^n} g \overline{h}$ for every $g \in H^p(\mathbb{D}^n)$, where $p'$ is the exponent dual to $p$ (see \emph{Theorem 7.1} in \cite{duren1970theory} for more details).

Since $\phi$ is the $\text{weak-}^*$ limit of some sequence of analytic polynomials $p_k$ (take Fej\'er means for example) and $f \overline{h} \in L^1(\mathbb{T}^n)$ for each $f \in H^p(\mathbb{D}^n)$, we get that $$\Gamma(\phi f) = \int_{\mathbb{T}^n} \phi f \overline{h} = \underset{k \xrightarrow{} \infty}{\lim} \int_{\mathbb{T}^n} p_k f \overline{h} = 0.$$ The last equality follows from the fact that $p_k f \in \text{S}[f]$ for each $k \in \mathbb{N}$, and that $\int_{\mathbb{T}^n} g \overline{h} = \Gamma(g) = 0$ for every $g \in \text{S}[f]$. Thus we reach a contradiction since $\Gamma$ was chosen so that $\Gamma(\phi f) \neq 0$. \qedhere

\end{proof}

\begin{proposition} \label{Hardy_SIS_prop}

Let $f \in H^p(\mathbb{D}^n)$ for some $1 \leq p < \infty$. If there is a sequence $\{f_k\}_{k \in \mathbb{N}} \subset H^{\infty}(\mathbb{D}^n)$ such that $f_k f \xrightarrow{} g$ as $k \xrightarrow{} \infty$ for some $g \in \mathcal{X}$, then $g \in \text{\emph{S}}[f]$. In particular, if there exists a sequence $\{f_k\}_{k \in \mathbb{N}} \subset H^{\infty}(\mathbb{D}^n)$ such that $f_k f \xrightarrow{} g$ for some cyclic $g \in H^p(\mathbb{D}^n)$, then $f$ is cyclic.

\end{proposition}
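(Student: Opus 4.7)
The plan is to deduce the proposition directly from the preceding lemma, which asserts that $\phi f \in \mathrm{S}[f]$ for every $\phi \in H^{\infty}(\mathbb{D}^n)$. Since each $f_k$ lies in $H^{\infty}(\mathbb{D}^n)$, the lemma tells us immediately that $f_k f \in \mathrm{S}[f]$ for every $k \in \mathbb{N}$. The set $\mathrm{S}[f]$ is by definition a closed subspace of $H^p(\mathbb{D}^n)$, so the assumed convergence $f_k f \to g$ (in the $H^p$-norm, which is the relevant topology for $\mathrm{S}[f]$) forces $g \in \mathrm{S}[f]$. This handles the first assertion.

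For the \emph{in particular} statement, I would argue that $\mathrm{S}[f]$ itself is a closed shift-invariant subspace, hence it contains $\mathrm{S}[h]$ for every $h \in \mathrm{S}[f]$. Applying this with $h = g$ (which lies in $\mathrm{S}[f]$ by the first part) gives $\mathrm{S}[g] \subseteq \mathrm{S}[f]$. The assumption that $g$ is cyclic means $\mathrm{S}[g] = H^p(\mathbb{D}^n)$, so $\mathrm{S}[f] = H^p(\mathbb{D}^n)$, i.e.\ $f$ is cyclic.

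There is essentially no obstacle in the argument as stated: the heavy lifting was done in the previous lemma (via Hahn--Banach and the weak-$*$ approximation of $H^{\infty}$ functions by analytic polynomials using Fej\'er means). The only point I would double-check when writing this out is that the ambient topology used in the statement of convergence $f_k f \to g$ is the norm of $H^p(\mathbb{D}^n)$ (the statement's stray reference to $\mathcal{X}$ should be read as $H^p(\mathbb{D}^n)$), which ensures that closedness of $\mathrm{S}[f]$ in $H^p(\mathbb{D}^n)$ applies, and that the inclusion $\mathrm{S}[g] \subseteq \mathrm{S}[f]$ really follows from $\mathrm{S}[f]$ being a closed polynomial-multiplication-invariant subspace containing $g$.
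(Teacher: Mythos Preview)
Your proposal is correct and follows essentially the same argument as the paper: both deduce $f_k f \in \mathrm{S}[f]$ from the preceding lemma, invoke closedness of $\mathrm{S}[f]$ to get $g \in \mathrm{S}[f]$, and then use $\mathrm{S}[g] \subseteq \mathrm{S}[f]$ together with cyclicity of $g$ for the second assertion.
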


\begin{proof}

The first part of the proposition follows easily from \emph{Lemma 4.5}, since $f_k f \in \text{S}[f]$ for each $k \in \mathbb{N}$, and S$[f]$ is closed implies $g = \underset{k \xrightarrow{} \infty}{\lim} f_k f \in \text{S}[f]$.

\vspace{1.5mm}

\noindent For the second part, note that $g \in \text{S}[f]$ implies S$[g] \subset \text{S}[f]$. Since $g$ is assumed to be cyclic, S$[g] = H^p(\mathbb{D}^n)$ which means S$[f] = H^p(\mathbb{D}^n)$. Therefore in this case, $f$ is also cyclic. \qedhere

\end{proof}

\noindent The following result follows easily from \emph{Theorem \ref{CPO_general_Q}} and \emph{Proposition \ref{Hardy_SIS_prop}}.

\begin{theorem} \label{CPO_Hardy_Banach}

Suppose $\mathcal{X}$ satisfies properties \textbf{Q1}-\textbf{Q3} over $\hat{D} \subset \mathbb{C}^n$. Let $T : \mathcal{X} \xrightarrow{} H^q(\mathbb{D}^m)$ be a bounded linear map for some $1 \leq q < \infty$. Then, the following are equivalent :

\begin{enumerate}

\item[$(1)$] $T$ preserves cyclicity, i.e. $Tf$ is cyclic in $H^q(\mathbb{D}^m)$ whenever $f$ is cyclic in $\mathcal{X}$.

\item[$(2)$] There exists a cyclic function $a \in H^q(\mathbb{D}^m)$ and an analytic function $b : \mathbb{D}^m \xrightarrow{} \hat{D}$ such that \\ $Tf = a \cdot (f \circ b)$ for every $f \in \mathcal{X}$.
    
\end{enumerate}

\end{theorem}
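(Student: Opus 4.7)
The plan is to handle the two implications separately, with the easier direction following almost immediately from \emph{Theorem \ref{CPO_general_Q}} and the harder one reducing to \emph{Proposition \ref{Hardy_SIS_prop}}.

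For $(1) \Rightarrow (2)$, I would note that the target $H^q(\mathbb{D}^m)$ for $1 \leq q < \infty$ satisfies \textbf{P2} and \textbf{P3}, so the hypotheses of \emph{Theorem \ref{CPO_general_Q}} are met. That theorem then delivers the weighted composition form $Tf = a \cdot (f \circ b)$ with $a = T(1) \in H^q(\mathbb{D}^m)$ cyclic and $b = T(z)/T(1) : \mathbb{D}^m \xrightarrow{} \hat{D}$ analytic, which is exactly $(2)$.

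For $(2) \Rightarrow (1)$, I would start with a cyclic $f \in \mathcal{X}$, choose polynomials $\{p_k\}_{k \in \mathbb{N}}$ with $p_k f \xrightarrow{} 1$ in $\mathcal{X}$, and use continuity of $T$ together with the weighted composition formula to compute
\[
T(p_k f)(u) = a(u)\, p_k(b(u))\, f(b(u)) = (p_k \circ b)(u) \cdot Tf(u) \;\xrightarrow{\,k \to \infty\,}\; T(1) = a
\]
in $H^q(\mathbb{D}^m)$. Since $a$ is cyclic in $H^q(\mathbb{D}^m)$, the ``in particular'' clause of \emph{Proposition \ref{Hardy_SIS_prop}} will then force $Tf$ to be cyclic, \emph{provided} each $p_k \circ b$ lies in $H^{\infty}(\mathbb{D}^m)$.

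The main obstacle is therefore verifying $p_k \circ b \in H^{\infty}(\mathbb{D}^m)$, which I would reduce to showing that the set $\hat{D}$ itself is bounded in $\mathbb{C}^n$. By \emph{Proposition \ref{spectrum_lemma}} we have $\hat{D} \subseteq \sigma_r(S)$, and a standard Neumann-series argument will show that whenever $|\lambda_j| > \|S_j\|$ for a single index $j$, the operator $S_j - \lambda_j I$ is invertible; hence choosing $A_j = (S_j - \lambda_j I)^{-1}$ and the remaining $A_i = 0$ certifies $\lambda \in \rho_r(S)$. Therefore $\sigma_r(S) \subseteq \prod_{j=1}^n \overline{B(0, \|S_j\|)}$, so $\hat{D}$ is bounded, each coordinate of $b$ is bounded analytic on $\mathbb{D}^m$, and $p_k \circ b$ is a polynomial in $H^{\infty}(\mathbb{D}^m)$-functions, hence in $H^{\infty}(\mathbb{D}^m)$. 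With this single boundedness check in hand, the two implications chain together cleanly via \emph{Theorem \ref{CPO_general_Q}} and \emph{Proposition \ref{Hardy_SIS_prop}}, and no further delicate analysis is needed.
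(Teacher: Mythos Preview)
Your proposal is correct and follows essentially the same route as the paper: $(1)\Rightarrow(2)$ by invoking \emph{Theorem \ref{CPO_general_Q}}, and $(2)\Rightarrow(1)$ by pushing $p_k f \to 1$ through $T$ and applying \emph{Proposition \ref{Hardy_SIS_prop}} once $p_k\circ b \in H^{\infty}(\mathbb{D}^m)$ is known. The only difference is that the paper dismisses the boundedness of $p_k\circ b$ in one line (``since the image of $b$ lies in $\hat{D}\subset\sigma_r(S)$''), whereas you spell out the Neumann-series reason why $\sigma_r(S)\subseteq\prod_j\overline{B(0,\|S_j\|)}$; this is a welcome elaboration rather than a different argument.
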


\begin{proof}

$(1) \Rightarrow (2)$ follows from \emph{Theorem \ref{CPO_general_Q}}.

\vspace{1.5mm}

\noindent For the converse, let $a \in H^q(\mathbb{D}^m)$ and $b : \mathbb{D}^m \xrightarrow{} \hat{D}$ be as in $(2)$. We show that for every cyclic $f \in \mathcal{X}$, $Tf = a \cdot (f \circ b)$ is cyclic in $H^q(\mathbb{D}^m)$.

\vspace{1.5mm}

Since $f$ is cyclic, there exist polynomials $p_k$ such that $p_k f \xrightarrow{} 1$ in $\mathcal{X}$. Since $T$ is a bounded map, $T(p_k f) \xrightarrow{} T(1)$ in $H^q(\mathbb{D}^m)$. Note that $T(1) = a$ is cyclic and $$T(p_k f) = a \cdot (p_k \circ b) \cdot (f \circ b) = (p_k \circ b) \cdot (a \cdot (f \circ b)).$$ It is easy to see $(p_k \circ b) \in H^{\infty}(\mathbb{D}^m)$ for each $n$ since the image of $b$ lies in $\hat{D} \subset \sigma_r(S)$. From the second part of \emph{Proposition \ref{Hardy_SIS_prop}}, since $(p_k \circ b) \cdot (a \cdot (f \circ b)) \xrightarrow{} a$ and $a$ is cyclic, we get that $Tf = a \cdot (f \circ b)$ is cyclic in $H^q(\mathbb{D}^m)$ and thus, $(2) \Rightarrow (1)$. \qedhere

\end{proof}

\begin{remark}

$(i)$ The proof of $(2) \Rightarrow (1)$ relies on \emph{Proposition \ref{Hardy_SIS_prop}}, which further relies on the fact that the dual of $L^p(\mathbb{T}^n)$ for $1 \leq p < \infty$ is $L^{p'}(\mathbb{T}^n)$ where $1/p + 1/p' = 1$ and thus, does not translate easily to other general spaces of analytic functions.

\vspace{1.5mm}

\noindent $(ii)$ Note that the proof of \emph{Theorem \ref{mash_Hardy}} above and \emph{Theorem 2} in \cite{kou2017linear} uses the canonical factorization theorem for Hardy spaces on the unit disc $\mathbb{D}$ (\emph{Theorem 2.8}, \cite{duren1970theory}). We do not have such a result when $n > 1$ (see \emph{Section 4.2} in \cite{rudin1969function}), hence a different approach was needed.

\vspace{1.5mm}

\noindent $(iii)$ Recall that \emph{Theorem \ref{mash_Hardy}} does not require boundedness of $T$ for the proof of $(1) \Rightarrow (2)$ to work when $\mathcal{X} = H^p(\mathbb{D})$. Plus, \emph{Theorem \ref{mash_Hardy}} is valid even for $0 < p < 1$. This is because its proof also depends on the canonical factorization theorem as mentioned above.

\vspace{1.5mm}

\noindent $(iv)$ We will see later in this section that $(1) \Rightarrow (2)$ is still valid for the case when $\mathcal{X} = H^p(\mathbb{D}^n)$ and $\mathcal{Y} = H^q(\mathbb{D}^m)$ for $0 < p,q < 1$ even though they are not Banach spaces. The case $p,q = \infty$ shall be treated separately as well since $H^{\infty}(\mathbb{D}^n)$ is not separable and hence the standard notion of cyclicity does not make any sense.

\end{remark}

We now show that the assumption `$T$ is a bounded operator' can be dropped in a specific case for the Hardy spaces. First, we need the following fact about boundedness of certain composition operators.

\begin{proposition} \label{Composition_boundedness}

For $1 \leq p < \infty$ and an analytic function $b : \mathbb{D}^m \xrightarrow{} \mathbb{D}$, the map $T : H^p(\mathbb{D}) \xrightarrow{} H^p(\mathbb{D}^m)$ defined as $Tf := f \circ b$ is a well-defined bounded linear operator.

\end{proposition}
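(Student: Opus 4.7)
The plan is to extend Littlewood's subordination principle from the disc to the polydisc, using the pluriharmonic mean-value property on the distinguished boundary $\mathbb{T}^m$. The key observation is that if $u$ is harmonic on $\mathbb{D}$, then $u \circ b$ is pluriharmonic on $\mathbb{D}^m$: since $\mathbb{D}$ is simply connected one can write $u = \mathrm{Re}(H)$ with $H \in \mathrm{Hol}(\mathbb{D})$, and then $u \circ b = \mathrm{Re}(H \circ b)$ with $H \circ b \in \mathrm{Hol}(\mathbb{D}^m)$. Expanding $H \circ b$ as a power series about the origin and integrating term-by-term over $\mathbb{T}^m$ (all monomials $w^\alpha$ with $\alpha \neq 0$ integrate to zero) yields the mean-value identity
$$\int_{\mathbb{T}^m} u(b(rw)) \, d\sigma_m(w) \;=\; u(b(0))$$
for every $0 < r < 1$.

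Concretely, given $f \in H^p(\mathbb{D})$, I would invoke the classical fact that $|f|^p$ is subharmonic and is pointwise dominated by its least harmonic majorant $u := P[|f^*|^p]$, the Poisson integral of the $L^p(\mathbb{T})$ boundary function $f^*$. Combining the pointwise bound $|f \circ b|^p \leq u \circ b$, the mean-value identity above applied to $u$, and the standard pointwise Poisson-kernel estimate $P_{b(0)}(\zeta) \leq (1+|b(0)|)/(1-|b(0)|)$ gives, for every $0 < r < 1$,
$$\int_{\mathbb{T}^m} |f(b(rw))|^p \, d\sigma_m(w) \;\leq\; u(b(0)) \;\leq\; \frac{1+|b(0)|}{1-|b(0)|}\,\|f\|_p^p.$$
Taking the supremum over $r \in (0,1)$ simultaneously shows that $f \circ b \in H^p(\mathbb{D}^m)$ and that the operator norm of $T$ is bounded by $\bigl((1+|b(0)|)/(1-|b(0)|)\bigr)^{1/p}$. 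Linearity of $T$ is immediate from the pointwise definition, so this completes the proof.

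The only step requiring genuine care is the harmonic-majorant inequality $|f|^p \leq P[|f^*|^p]$, which I would regard as the main obstacle in a self-contained write-up. It is nevertheless a classical one-variable fact: subharmonicity of $|f|^p$ follows from applying the convex increasing function $t \mapsto e^{pt}$ to the subharmonic function $\log|f|$, and the Poisson-integral domination then comes from comparing $|f|^p$ with the harmonic extension of $|f_\rho|^p$ on circles $\{|z| = \rho\}$ and letting $\rho \to 1^-$, using $L^p$-convergence of the dilations $f_\rho \to f^*$. Once this one-variable ingredient is in place, the multivariate part of the argument reduces to the short pluriharmonic mean-value computation above.
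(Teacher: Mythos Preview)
Your proposal is correct and follows essentially the same approach as the paper: both arguments use the least harmonic majorant $U = P[|f^*|^p]$ of $|f|^p$, observe that $U \circ b$ is the real part of a holomorphic function on $\mathbb{D}^m$ (hence satisfies the mean-value property over $r\mathbb{T}^m$), and combine the pointwise bound $|f\circ b|^p \le U\circ b$ with the Poisson-kernel estimate $P_{b(0)} \le (1+|b(0)|)/(1-|b(0)|)$ to obtain the same operator-norm bound. The paper phrases the intermediate step in terms of $m$-harmonic functions rather than pluriharmonic ones and cites Duren for the harmonic-majorant inequality you flag, but the logic is identical.
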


\begin{proof}

First, we show that $f \circ b \in H^p(\mathbb{D}^m)$ for every $f \in H^p(\mathbb{D})$ which shows $T$ is well-defined. The linearity of $T$ is immediate after that. We use the existence of harmonic majorants for functions in the Hardy spaces and their properties for the rest of the proof. See \emph{Section 3.2} in \cite{rudin1969function} for more details. The argument here is inspired by the one given in the corollary of \emph{Theorem 2.12} in \cite{duren1970theory} for the case $m = 1$.

\vspace{1.5mm}

\noindent Let $U$ be the smallest harmonic majorant of $|f|^p$, i.e. the Poisson integral of $|f(e^{i \theta})|^p$ given by $$ U(r e^{i \theta}) = \frac{1}{2 \pi} \int_0^{2 \pi} P(r, \theta - t) |f(e^{it})|^p dt, \, \text{ where } P(r,\theta) = \text{Re} \bigg ( \frac{1+r e^{i \theta}}{1-r e^{i \theta}} \bigg ).$$ Then, $|f(u)|^p \leq U(u)$ for all $u \in \mathbb{D}$ which implies that $|Tf(z)|^p \leq U(b(z))$ for every $z \in \mathbb{D}^m$.

\vspace{1.5mm}

\noindent Since $U$ is harmonic, $U =$ Re$(g)$ for some analytic function $g : \mathbb{D} \xrightarrow{} \mathbb{C}$. This means that $U \circ b =$ Re$(g \circ b)$ which implies that $U \circ b$ is an $m$-harmonic function and thus, a harmonic majorant for $|Tf|^p = |f \circ b|^p$. This proves that $f \circ b \in H^p(\mathbb{D}^m)$ and so, $T$ is a well-defined map. To show $T$ is bounded, observe that $$ M_p(r,f \circ b)^p \leq U(b(0)) \leq \frac{1 + |b(0)|}{1 - |b(0)|} ||f||^p, \, \text{ where } M_p(r,f \circ b) := \Big ( \underset{r \mathbb{T}^m}{\int} |f \circ b|^p d \sigma_m \Big )^{\frac{1}{p}}. $$ The first inequality follows from the mean value property of $m$-harmonic functions, and the second inequality follows from the fact that $P(r , \theta) \leq (1+r)/(1-r)$ for all values of $r$ and $\theta$. Taking supremum over $r$ in the above inequality, we get $$||f \circ b|| \leq \Big ( \frac{1 + |b(0)|}{1 - |b(0)|} \Big )^{1/p} ||f||$$ for every $f \in H^p(\mathbb{D})$ and thus, $T$ is bounded. \qedhere

\end{proof}

The following result follows easily from the previous discussion.

\begin{proposition} \label{CPO_Hardy_composition}

Fix $1 \leq p < \infty$ and let $T : H^p(\mathbb{D}) \xrightarrow{} H^p(\mathbb{D}^m)$ be a linear map such that $T1 = 1$. Then, the following are equivalent :

\begin{enumerate} 

\item[$(1)$] $T$ is a bounded linear map that preserves cyclicity

\item[$(2)$] There exists an analytic function $b : \mathbb{D}^m \xrightarrow{} \mathbb{D}$ such that $Tf = f \circ b$ for every $f \in H^p(\mathbb{D})$

\end{enumerate}

\end{proposition}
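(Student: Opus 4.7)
The plan is to read this proposition as an immediate corollary of Theorem \ref{CPO_Hardy_Banach} combined with Proposition \ref{Composition_boundedness}, with the normalization $T1 = 1$ being precisely what collapses a general weighted composition operator to an unweighted one. The starting point is that $H^p(\mathbb{D})$ for $1 \leq p < \infty$, viewed over its maximal domain $\hat{D} = \mathbb{D}$ as discussed in Example \ref{example_Hardy_envelope}, satisfies \textbf{Q1}-\textbf{Q3}, so it fits the hypotheses of Theorem \ref{CPO_Hardy_Banach}.

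For $(1) \Rightarrow (2)$, I apply Theorem \ref{CPO_Hardy_Banach} with $\mathcal{X} = H^p(\mathbb{D})$ and $\mathcal{Y} = H^p(\mathbb{D}^m)$ (so $q = p$). The assumption that $T$ is bounded and preserves cyclicity produces a cyclic $a \in H^p(\mathbb{D}^m)$ and an analytic $b : \mathbb{D}^m \to \mathbb{D}$ such that $Tf = a \cdot (f \circ b)$ for every $f \in H^p(\mathbb{D})$. Evaluating at $f \equiv 1$ forces $1 = T1 = a \cdot 1 = a$, whence $Tf = f \circ b$ as desired.

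For $(2) \Rightarrow (1)$, boundedness of the composition operator $f \mapsto f \circ b$ from $H^p(\mathbb{D})$ into $H^p(\mathbb{D}^m)$ is precisely the content of Proposition \ref{Composition_boundedness}. Preservation of cyclicity then follows directly from the $(2) \Rightarrow (1)$ direction of Theorem \ref{CPO_Hardy_Banach} applied with weight $a \equiv 1$, which is trivially cyclic in $H^p(\mathbb{D}^m)$.

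No substantive obstacle is anticipated since all the necessary machinery has already been assembled; the argument is essentially a bookkeeping exercise that exploits the pointwise normalization $T1 = 1$ to identify the weight in the weighted composition representation.
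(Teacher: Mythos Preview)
Your proposal is correct and follows essentially the same approach as the paper: the paper likewise invokes the earlier cyclicity-preserving theorem for $(1)\Rightarrow(2)$ (with $T1=1$ forcing the weight to be $1$) and then combines Proposition~\ref{Composition_boundedness} with the $(2)\Rightarrow(1)$ direction of Theorem~\ref{CPO_Hardy_Banach} for the converse. The only cosmetic difference is that the paper cites Theorem~\ref{CPO_general_Q} directly for the forward implication rather than going through Theorem~\ref{CPO_Hardy_Banach}, but these amount to the same argument.
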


\begin{proof}

As before, $(1) \Rightarrow (2)$ follows directly from \emph{Theorem 4.3}.

\vspace{1.5mm}

\noindent For the converse, let $b : \mathbb{D}^m \xrightarrow{} \mathbb{D}$ be an analytic function such that $Tf = f \circ b$ for each $f \in H^p(\mathbb{D})$. By \emph{Proposition \ref{Composition_boundedness}}, $T$ is a bounded linear operator. By $(2) \Rightarrow (1)$ in \emph{Theorem \ref{CPO_Hardy_Banach}}, $T$ preserves cyclicity. \qedhere

\end{proof}

\begin{remark}

Note that the only place we use that the domain of $H^p(\mathbb{D})$ is in one variable, is to show boundedness of $f \mapsto f \circ b$ for every $b : \mathbb{D}^m \xrightarrow{} \mathbb{D}$. More precisely, we use the fact that any harmonic function $U$ in one variable is the real part of some holomorphic function. This is not true for $n > 1$ (see \emph{Section 2.4} in \cite{rudin1969function}).

\end{remark}

As mentioned in \emph{Remark $(iv)$} under \emph{Theorem \ref{CPO_Hardy_Banach}}, we now consider the cases $0 < p < 1$ and $p = \infty$.

\begin{example} \label{example_p<1}

First, we address the case $\mathcal{X} = H^{p}(\mathbb{D}^n)$ for $0 < p < 1$. Fix $0 < p <1$ and note that $H^p(\mathbb{D}^n)$ satisfies \textbf{P1}-\textbf{P3} if we replace boundedness with continuity. The issue is that $H^p(\mathbb{D}^n)$ is not a Banach space.

\vspace{1.5mm}

\noindent Even though $H^p(\mathbb{D}^n)$ is not normable, it is still a complete metric space under $d_p(f,g) := ||f-g||_p^p$ where $|| \cdot ||_p$ is as defined in \emph{Section 1}.

\vspace{1.5mm}

Using this, its bounded linear functionals can be defined in the usual manner. That is, we say that $\Lambda : H^p(\mathbb{D}^n) \xrightarrow{} \mathbb{C}$ is bounded if $$ ||\Lambda|| := \underset{||f||_p = 1}{\text{sup}} |\Lambda(f)| < \infty $$ This means that $|\Lambda(f)| \leq ||\Lambda|| \cdot ||f||$ for all bounded $\Lambda$, and $f \in H^p(\mathbb{D}^n)$. It is easy to verify that this notion of boundedness is equivalent to the continuity of $\Lambda$. Similarly, we say an operator $T : H^p(\mathbb{D}^n) \xrightarrow{} H^q(\mathbb{D}^m)$ for some $0 < q \leq \infty$ is bounded if \[ ||T|| := \underset{||f||_p = 1}{\text{sup}} ||Tf||_q < \infty. \] As was the case with linear functionals, it is easy to verify that this notion of boundedness is equivalent to the continuity of $T$. This implies that for every bounded linear operator $T$ on $H^p(\mathbb{D}^n)$ and $f \in H^p(\mathbb{D}^n)$, $||Tf|| \leq ||T|| \cdot ||f||$. This means \emph{Lemmas \ref{exponential_existence}} and \emph{4.2} hold for $\mathcal{X} = H^p(\mathbb{D}^n)$ even when $0 < p < 1$.

\vspace{1.5mm}

\noindent In order to show that \emph{Theorem \ref{CPO_general_Q}} holds for $\mathcal{X} = H^p(\mathbb{D}^n)$, we only need to show that \emph{Theorem \ref{fundamental_Q}} holds since the arguments in the proof of \emph{Theorem \ref{CPO_general_Q}} do not rely on the Banach space structure of $\mathcal{X}$ except when \emph{Theorem \ref{fundamental_Q}} is applied. First, we show that the maximal domain for functions in $H^p(\mathbb{D}^n)$ when $0 < p < 1$ is also $\mathbb{D}^n$.

\vspace{1.5mm}

\noindent Fix $0 < p < 1$. We will show as in \emph{Example \ref{example_Hardy_envelope}} that the family $\mathcal{F} := \{ z_i - \beta \; \big \rvert \; 1 \leq i \leq n \text{ and } \beta \not \in \mathbb{D} \}$ is an envelope of cyclic polynomials in $H^p(\mathbb{D}^n)$. Let $b \in \overline{\mathbb{D}^n}$ be such that $\Lambda_b \rvert _{\mathcal{P}}$ extends to a bounded linear functional $ \Lambda \in H^p(\mathbb{D}^n)$. Thus $b_j \in \mathbb{T}$ for some $1 \leq j \leq n$.

\vspace{1.5mm}

\noindent Using subordination and an argument similar to \emph{Theorem 3.2} in \cite{duren1970theory}, we can show that $1/(z - b_j) \in H^p(\mathbb{D})$. Consider a sequence of polynomials $\{q_k\}_{k \in \mathbb{N}}$ such that $q_k \xrightarrow{} 1/(z - b_j)$ in $H^p(\mathbb{D})$ and note that $q_k (z - b_j) \xrightarrow{} 1$ since multiplication by $z - b_j$ is a bounded linear operator on $H^p(\mathbb{D})$. This implies $z - b_j$ is cyclic in $H^p(\mathbb{D})$ and thus, $q(z) := z_j - b_j$ is cyclic in $H^p(\mathbb{D}^n)$. Clearly $\mathcal{F}$ defined above is then an envelope of cyclic polynomials. This means that for any given $f \in H^p(\mathbb{D}^n)$, there exists a sequence of polynomials $\{p_k\}_{k \in \mathbb{N}}$ such that $p_k q \xrightarrow{} f$. Since $q(b) = 0$, we get $$\Lambda(f) = \underset{k \xrightarrow{} \infty}{\lim} \Lambda(p_k q) = \underset{k \xrightarrow{} \infty}{\lim} p_k(b) q(b) = 0.$$ This means $\Lambda \equiv 0$, a contradiction. So, $b \in \mathbb{D}^n$ and we get $\hat{D} = \mathbb{D}^n$ for $H^p(\mathbb{D}^n)$ when $0 < p < 1$.

\vspace{1.5mm}

It is known that all outer functions in $H^p(\mathbb{D})$ are also cyclic for $0 < p < 1$ (\emph{Theorem 4}, \cite{10.2307/1994442}) but we prove $z - b_j$ is cyclic differently to show one way of obtaining an envelope of cyclic polynomials in the case when $\mathcal{X}$ is not a Banach space. Notice that the only other place we use the norm in the proof of \emph{Theorem \ref{fundamental_P}} (and hence \emph{Theorem \ref{fundamental_Q}}) is to obtain the non-vanishing entire function $F(w) := \underset{\alpha \in \mathbb{Z}^+(n)}{\sum} \frac{\Lambda(z^{\alpha}) \cdot w^{\alpha}}{\alpha !}$ using $$|\Lambda(z^{\alpha})| \leq ||\Lambda|| \cdot ||z^{\alpha}|| \leq ||\Lambda|| \cdot ||S_1||^{\alpha_1} \dots||S_n||^{\alpha_n} \cdot ||1||, \, \text{for every } \alpha \in \mathbb{Z}^+(n).$$ As we saw above, this should not be an issue for $H^p(\mathbb{D}^n)$ since $||\Lambda||$ makes just as much sense and $||z^{\alpha}|| = 1$ for all $\alpha \in \mathbb{Z}^+(n)$. This gives us $|\Lambda(z^{\alpha})| \leq ||\Lambda||$, which is good enough for the rest of the proof to work. Therefore \emph{Theorem \ref{fundamental_Q}} holds for $\mathcal{X} = H^p(\mathbb{D}^n)$ and $\mathcal{Y} = H^q(\mathbb{D}^m)$, and so does \emph{Theorem \ref{CPO_general_Q}} even when $0 < p,q < 1$.

\end{example}

\begin{example} \label{example_p_infty}

The case $p = \infty$ is a little different since the problem here is that the set of polynomials is not dense in $H^{\infty}(\mathbb{D}^n)$. In fact $H^{\infty}(\mathbb{D}^n)$ is not separable so cyclicity of functions does not make sense. However in the case when $n = 1$, we know that all cyclic functions in $H^p(\mathbb{D})$ for $0 < p < \infty$ are \emph{outer functions} and vice versa (\emph{Theorem 7.4}, \cite{duren1970theory}, \emph{Theorem 4}, \cite{10.2307/1994442}). Since outer functions do make sense for $n \geq 1$ and $p = \infty$, we can talk about outer functions instead of cyclic functions in this case.

\begin{definition} \label{definition_outer}

For $0 < p \leq \infty$, a function $f \in H^p(\mathbb{D}^n)$ is said to be outer in $H^p(\mathbb{D}^n)$ if $\log|f(0)| = \int_{\mathbb{T}^n} \log|f|$.

\end{definition}

\noindent It is known that the class of outer functions is different from cyclic functions when $n > 1$. In fact, all cyclic functions are outer, but there are outer functions that are not cyclic. The proof of this fact, and more details on outer functions can be found in \emph{Section 4.4} of \cite{rudin1969function}. The argument uses different composition operators between $H^2(\mathbb{D}^2)$ and a few different Hardy spaces on the unit disc to explicitly find such a function. This was a motivating result for studying operators that preserve outer/cyclic functions with the hope of getting a better understanding of the difference between outer and cyclic functions. We discuss this a little bit in \emph{Theorem \ref{outer_but_not_cyclic_example}} below.

\vspace{1.5mm}

Note that in the case $p = \infty$, the hypothesis of \emph{Theorem \ref{fundamental_Q}} does not make sense. In fact, we will completely avoid using maximal domains for $H^{\infty}(\mathbb{D}^n)$ since without cyclicity, we cannot even determine if $\hat{D} \subset \sigma_r(S)$. Instead, consider $\Lambda \in \big (H^{\infty}(\mathbb{D}^n) \big )^*$ such that $\Lambda(f) \neq 0$ for all outer functions $f \in H^{\infty}(\mathbb{D}^n)$. Since $e^{w \cdot z}$ is clearly an outer function for all $w \in \mathbb{C}^n$, we proceed as in the proof of \emph{Theorem \ref{fundamental_P}} to obtain $\Lambda |_{\mathcal{P}} \equiv a \Lambda_b |_{\mathcal{P}}$ for some $a \in \mathbb{C} \setminus \{ 0 \}$ and $b \in \mathbb{C}^n$.

\vspace{1.5mm}

\noindent Now, proceed as in \emph{Example \ref{example_Hardy_envelope}} and instead of having an envelope of cyclic polynomials, we now have an envelope of outer polynomials which is the same set of polynomials $\mathcal{F} = \{ z_i - \beta \; | \; 1 \leq i \leq n, \, \beta \not \in \mathbb{D} \} $. Since $\Lambda(f) \neq 0$ for all outer functions $f$, we get $b_i - \beta \neq 0$ for all $1 \leq i \leq n$ and $\beta \not \in \mathbb{D}$ which implies $b_i \in \mathbb{D}$ for every $1 \leq i \leq n$. Therefore, $b \in \mathbb{D}^n$ and $\Lambda f = a \cdot f(b)$ for all $f \in H^{\infty}(\mathbb{D}^n)$.

\vspace{1.5mm}

\noindent Thus, the conclusion of \emph{Theorem \ref{fundamental_P}} is valid for $H^{\infty}(\mathbb{D}^n)$ if we consider $\Lambda \in \big ( H^{\infty}(\mathbb{D}^n) \big )^*$ that acts on outer functions as above and so, \emph{Theorem \ref{CPO_general_P}} is valid for $\mathcal{X} = H^{\infty}(\mathbb{D}^n)$ if we replace cyclic functions with outer functions and $\hat{D}$ with $\mathbb{D}^n$. A similar logic can be applied to operators that preserve outer functions in $H^p(\mathbb{D}^n)$ for $0 < p < \infty$.

\end{example}

\noindent This discussion about Hardy spaces above yields the following generalization of \emph{Theorem \ref{mash_Hardy}} for bounded operators from $H^p(\mathbb{D}^n)$ into $H^q(\mathbb{D}^m)$ for all $0 < p,q \leq \infty$ and $m,n \in \mathbb{N}$ that preserve cyclic/outer functions.

\begin{theorem} \label{CPO_Hardy_full}

$(1)$ Fix $0 < p,q < \infty$ and $m, n \in \mathbb{N}$. Let $T : H^p(\mathbb{D}^n) \xrightarrow{} H^q(\mathbb{D}^m)$ be a bounded linear operator such that $Tf$ is cyclic whenever $f$ is cyclic. Then, $T$ is necessarily a weighted composition operator, i.e. there exist analytic functions $a \in H^q(\mathbb{D}^m)$ and $b : \mathbb{D}^m \xrightarrow{} \mathbb{D}^n$ such that $Tf(z) = a(z) f(b(z))$ for every $z \in \mathbb{D}^m$ and $f \in H^p(\mathbb{D}^n)$.

\vspace{1.5mm}

Furthermore $a = T1$ is cyclic and $b = \frac{T(z)}{T1}$, where $T(z) = \big (T(z_i) \big )_{i = 1}^n$.

\vspace{1.5mm}

\noindent In the case when $1 \leq q < \infty$, the converse is also true. That is, all bounded weighted composition operators from $H^p(\mathbb{D}^n)$ into $H^q(\mathbb{D}^m)$ also preserve cyclicity.

\vspace{1.5mm}

$(2)$ Fix $0 < p,q \leq \infty$ and $m,n \in \mathbb{N}$. Let $T : H^{p}(\mathbb{D}^n) \xrightarrow{} H^{q}(\mathbb{D}^m)$ be a bounded linear operator such that $Tf$ is outer whenever $f$ is outer. Then, $T$ is necessarily a weighted composition operator, i.e. there exist analytic functions $a \in H^q(\mathbb{D}^m)$ and $b : \mathbb{D}^m \xrightarrow{} \mathbb{D}^n$ such that $Tf(z) = a(z) f(b(z))$ for every $z \in \mathbb{D}^m$ and $f \in H^{p}(\mathbb{D}^n)$.

\vspace{1.5mm}

Furthermore, $a = T1$ is outer and $b = \frac{T(z)}{T1}$, where $T(z) = \big (T(z_i) \big )_{i = 1}^n$.

\end{theorem}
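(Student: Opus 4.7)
The plan is to package together the pieces already developed in the paper, handling the various ranges of $p$ and $q$ separately and stitching them at the end.

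For part $(1)$, I would first handle the Banach case $1 \leq p < \infty$. By Example \ref{Hardy_envelope_1}, the space $H^p(\mathbb{D}^n)$ satisfies \textbf{P1}-\textbf{P3} over $\mathbb{D}^n$ and has an envelope of cyclic polynomials, so its maximal domain $\hat D$ coincides with $\mathbb{D}^n$. Viewing $H^p(\mathbb{D}^n)$ as a space satisfying \textbf{Q1}-\textbf{Q3} over $\hat D = \mathbb{D}^n$, Theorem \ref{CPO_general_Q} applied with $\mathcal{X} = H^p(\mathbb{D}^n)$, $\mathcal{Y} = H^q(\mathbb{D}^m)$ yields the weighted composition representation $Tf(z) = a(z)f(b(z))$ with $a = T1 \in H^q(\mathbb{D}^m)$ cyclic and $b = T(z)/T1 : \mathbb{D}^m \to \mathbb{D}^n$ analytic.

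Next I would treat $0 < p < 1$, for which the machinery of Sections 2-3 has to be redone carefully since $H^p(\mathbb{D}^n)$ is not a Banach space but only a complete metric topological vector space under $d_p(f,g) = \|f-g\|_p^p$. Exactly as in Example \ref{example_p<1}, all the technical inputs still function: continuity of $\Lambda \in (H^p(\mathbb{D}^n))^*$ is equivalent to boundedness in the sense $|\Lambda(f)| \leq \|\Lambda\| \|f\|_p$; Lemma \ref{exponential_existence} and the construction of the non-vanishing entire function $F(w) = \sum_\alpha \Lambda(z^\alpha) w^\alpha/\alpha!$ from Theorem \ref{fundamental_P} use only $|\Lambda(z^\alpha)| \leq \|\Lambda\|$, which is valid; and the envelope-of-cyclic-polynomials argument still shows $\hat D = \mathbb{D}^n$ once one verifies (via subordination, as in \emph{Theorem 3.2} of \cite{duren1970theory}) that each $z_j - b_j$ with $b_j \in \mathbb{T}$ is cyclic in $H^p(\mathbb{D})$ and hence in $H^p(\mathbb{D}^n)$. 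Granted these extensions, Theorem \ref{fundamental_Q} and Theorem \ref{CPO_general_Q} apply verbatim. The converse, when $1 \leq q < \infty$, is exactly the content of $(2) \Rightarrow (1)$ in Theorem \ref{CPO_Hardy_Banach}: given the representation $Tf = a \cdot (f \circ b)$ with $a$ cyclic in $H^q(\mathbb{D}^m)$ and $b : \mathbb{D}^m \to \mathbb{D}^n$, for any cyclic $f \in H^p(\mathbb{D}^n)$ one picks polynomials $p_k$ with $p_k f \to 1$, applies $T$ to obtain $(p_k \circ b)(a \cdot (f \circ b)) \to a$, and invokes Proposition \ref{Hardy_SIS_prop} to conclude $Tf$ is cyclic.

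For part $(2)$, I would follow Example \ref{example_p_infty}. The essential point is that $e^{w \cdot z}$ is outer for every $w \in \mathbb{C}^n$, so the hypothesis ``$Tf$ outer whenever $f$ is outer'' together with $\Gamma_u \circ T \in (H^p(\mathbb{D}^n))^*$ feeds into the proof of Theorem \ref{fundamental_P}: one obtains $\Lambda|_\mathcal{P} \equiv a \Lambda_b|_\mathcal{P}$ for some $a \neq 0$ and $b \in \mathbb{C}^n$. To pin $b$ down in $\mathbb{D}^n$, I would use that $\mathcal{F} = \{z_i - \beta : 1 \leq i \leq n,\ \beta \notin \mathbb{D}\}$ is an envelope of \emph{outer} polynomials: if $b_i - \beta = 0$ for some $\beta \notin \mathbb{D}$, then $\Lambda$ would vanish on an outer function, contradicting the hypothesis. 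This forces $b \in \mathbb{D}^n$, and then the weighted composition representation, cyclicity/outerness of $a = T1$, and the formula $b = T(z)/T1$ all follow exactly as before; the range $p = \infty$ is included automatically since the argument never used density of polynomials. I would handle $0 < p < 1$ inside part $(2)$ by reusing the non-Banach modifications of part $(1)$.

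The main obstacle I expect is not in part $(1)$ for $1 \leq p < \infty$, which is essentially a corollary of what has been proved, but in carefully verifying that every step of Theorem \ref{fundamental_P}, Theorem \ref{fundamental_Q} and the cyclicity-envelope argument survives the passage to $0 < p < 1$ where $H^p(\mathbb{D}^n)$ is only an F-space. The subtlest ingredient there is cyclicity of $z_j - b_j$ with $b_j \in \mathbb{T}$, which requires the subordination-based construction of $1/(z - b_j) \in H^p(\mathbb{D})$ to replace the canonical factorization theorem that is no longer available in several variables.
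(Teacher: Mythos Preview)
Your proposal is correct and follows essentially the same assembly of pieces as the paper: Theorem \ref{CPO_general_Q} together with Example \ref{example_p<1} for the forward direction of part $(1)$, Theorem \ref{CPO_Hardy_Banach} for the converse when $1\le q<\infty$, and Example \ref{example_p_infty} for part $(2)$. One small caveat: your remark that for $p=\infty$ ``the argument never used density of polynomials'' glosses over the passage from $\Lambda|_{\mathcal P}\equiv a\Lambda_b|_{\mathcal P}$ to $\Lambda\equiv a\Lambda_b$ on all of $H^{\infty}(\mathbb{D}^n)$, but the paper's Example \ref{example_p_infty} is equally terse at exactly this step.
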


\begin{remark}

$(i)$ Note that the proof of \emph{Proposition \ref{Hardy_SIS_prop}} above is not valid for $0 < q < 1$ or $q = \infty$ since we use the duality of $L^q(\mathbb{T}^m)$ when $1 \leq q < \infty$. Therefore we do not obtain a result like \emph{Theorem \ref{CPO_Hardy_Banach}} when $0 < q < 1$ or $q = \infty$. \emph{Theorem \ref{CPO_Hardy_full}} is probably the best we can expect in these cases with our techniques.

\vspace{1.5mm}

\noindent $(ii)$ It is not easy to determine when even a composition operator would preserve outer functions. One can check that in certain cases (like part $(b)$ of \emph{Lemma 4.4.4} in \cite{rudin1969function}) the map $f \mapsto f \circ b$ preserves outer functions but more generally, it is not known to be true.

\vspace{1.5mm}

\noindent It would be interesting to characterize all weighted composition operators that preserve outer functions since it might help us understand the difference between outer functions and cyclic functions in $H^p(\mathbb{D}^n)$ for $n > 1$. Unless, of course, all bounded weighted composition operators also preserve outer functions in which case we do not obtain any distinguishing property using cyclicity/outer preserving operators but rather, a kind of `\emph{linear rigidity}' between outer and cyclic functions. The following result shows that such a rigidity does not exist in general when $n > 1$.

\end{remark}

\begin{theorem} \label{outer_but_not_cyclic_example}

Let $0 < q < 1/2$ be arbitrary. There exists a bounded linear map $T : H^2(\mathbb{D}^2) \xrightarrow{} H^q(\mathbb{D})$ such that it preserves cyclicity, but not outer functions.

\end{theorem}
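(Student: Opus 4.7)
The plan is to realize $T$ as a composition operator $Tf := f \circ b$ for an analytic map $b : \mathbb{D} \to \mathbb{D}^2$ with $b(0) = (0,0)$ and carefully constructed boundary behavior. Following the analysis of \emph{Section 4.4} of \cite{rudin1969function} --- which shows that in $H^p(\mathbb{D}^n)$ for $n > 1$ some outer functions fail to be cyclic --- one picks $b$ so that the push-forward $b_\ast \sigma$ of Haar measure on $\mathbb{T}$ is a probability measure on $\overline{\mathbb{D}^2}$ that differs substantially from $\sigma_2$ on $\mathbb{T}^2$ (e.g., $b$ has both components inner functions on $\mathbb{D}$).

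First, one verifies the boundedness of $T : H^2(\mathbb{D}^2) \to H^q(\mathbb{D})$ via a trace estimate along the curve $b(\mathbb{D})$. The restriction $q < 1/2$ arises because restricting a bivariate $H^2$ function to a one-dimensional curve generically costs a factor in the exponent, and for the specific $b$ in use a combination of coefficient-wise $q$-subadditivity bounds (for $q < 1$) with Hausdorff-Young-type inequalities for $H^q(\mathbb{D})$ yields $||f \circ b||_{H^q(\mathbb{D})} \leq C ||f||_{H^2(\mathbb{D}^2)}$.

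Next, one checks that $T$ preserves cyclicity. Given cyclic $f \in H^2(\mathbb{D}^2)$, pick polynomials $p_k$ with $p_k f \to 1$ in $H^2(\mathbb{D}^2)$. Continuity of $T$ gives $(p_k \circ b)(f \circ b) = T(p_k f) \to T(1) = 1$ in $H^q(\mathbb{D})$. Each $\phi_k := p_k \circ b$ lies in $H^\infty(\mathbb{D})$; its Fej\'er means $\sigma_{k,n}$ are polynomials in $z$ with $||\sigma_{k,n}||_{\infty} \leq ||\phi_k||_{\infty}$ converging a.e.\ on $\mathbb{T}$ to $\phi_k$. Dominated convergence applied to $\int_\mathbb{T} |\sigma_{k,n}(f \circ b) - \phi_k(f \circ b)|^q$ yields $\sigma_{k,n}(f \circ b) \to \phi_k(f \circ b)$ in $H^q(\mathbb{D})$; a diagonal extraction then produces polynomials $r_j \in \mathcal{P}$ with $r_j \cdot T f \to 1$ in $H^q(\mathbb{D})$, so $Tf$ is cyclic.

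Finally, one must exhibit an outer $f \in H^2(\mathbb{D}^2)$ with $Tf$ not outer --- this is the main obstacle. Because $b(0) = (0, 0)$, outer-ness of $f$ forces $\log|f(b(0))| = \int_{\mathbb{T}^2} \log|f|$, so $Tf$ fails to be outer precisely when $\int_\mathbb{T} \log|f \circ b| > \int_{\mathbb{T}^2} \log|f|$. When $b_\ast \sigma$ is concentrated on a $\sigma_2$-null subset of $\mathbb{T}^2$, the two averages of $\log|f|$ can differ strictly; Rudin's construction then supplies the required outer $f$ realising the strict inequality against our $b$. Producing this pair $(f, b)$ --- exploiting the asymmetry between the distinguished boundary $\mathbb{T}^2$ and a one-dimensional curve in $\overline{\mathbb{D}^2}$, a phenomenon unique to $n > 1$ --- is precisely the delicate boundary analysis of \cite{rudin1969function} that distinguishes outer functions from cyclic functions in the polydisc.
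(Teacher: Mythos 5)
There is a genuine gap: the heart of the theorem is the existence of a concrete pair $(b,f)$ --- a map $b$ making $Tf:=f\circ b$ bounded from $H^2(\mathbb{D}^2)$ into $H^q(\mathbb{D})$ for $q<1/2$, and an outer $f$ with $Tf$ not outer --- and you never produce one; worse, the class of $b$ you propose would not work. The paper's proof is Rudin's explicit example: $Tf(z):=f\big(\tfrac{1+z}{2},\tfrac{1+z}{2}\big)$, which is bounded into $H^q(\mathbb{D})$ for $q<1/2$, together with $f(z_1,z_2)=e^{(z_1+z_2+2)/(z_1+z_2-2)}$, which is outer while $Tf(z)=e^{(z+3)/(z-1)}=e\cdot e^{-4/(1-z)}$ is a constant times a singular inner function, hence not outer. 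Your normalization $b(0)=(0,0)$ and your suggestion that both components of $b$ be inner are incompatible with this example: Rudin's $b$ has $b(0)=(\tfrac12,\tfrac12)$ and its components map $\mathbb{D}$ onto a horodisc meeting $\mathbb{T}$ only at $1$, so they are as far from inner as possible. For $b$ with inner components the operator is typically not even well defined into any $H^q$: already for $b(z)=(z,z)$ the diagonal restriction of $H^2(\mathbb{D}^2)$ is exactly the Bergman space $A^2(\mathbb{D})$, which contains functions whose zero sets violate the Blaschke condition and hence lie in no $H^q$, indeed not even in the Nevanlinna class. The bound for $q<1/2$ is a specific feature of the horodisc map $z\mapsto\tfrac{1+z}{2}$, not a generic trace estimate. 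Finally, your criterion for failure of outerness (``$\int_{\mathbb{T}}\log|f\circ b|>\int_{\mathbb{T}^2}\log|f|$'', which relies on $b(0)=(0,0)$) is not what Rudin's construction realizes: there one has $\int_{\mathbb{T}}\log|Tf|=\int_{\mathbb{T}^2}\log|f|=-1$ while $\log|Tf(0)|=-3$, i.e.\ the defect appears at the interior point $b(0)\neq(0,0)$, because $b$ is tangent to $\mathbb{T}^2$ at the one point $(1,1)$ where $f$ carries its singular mass. So the appeal ``Rudin's construction then supplies the required $f$ against our $b$'' does not go through for the $b$ you describe.

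One part of your argument is correct and is genuinely different from (and in a sense stronger than) what the paper records: your proof that a bounded composition operator into $H^q(\mathbb{D})$ preserves cyclicity via Fej\'er means and dominated convergence. The paper's corresponding tool (Proposition \ref{Hardy_SIS_prop}) rests on $L^{p}$--$L^{p'}$ duality and is explicitly noted to fail for $0<q<1$, whereas multiplying $Tf$ by uniformly bounded polynomials converging a.e.\ on $\mathbb{T}$ and dominating $\int_{\mathbb{T}}|\sigma_{k,n}-\phi_k|^q|f\circ b|^q$ works perfectly well in the metric of $H^q$ for $q<1$. But this settles only one of the three claims; boundedness of $T$ and the existence of an outer $f$ with $Tf$ not outer remain unproved in your write-up.
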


\begin{proof}

This example is from \cite{rudin1969function} but it was used in a different context; to obtain an outer function in $H^2(\mathbb{D}^2)$ which is not cyclic. We refer the reader to the discussion surrounding \emph{Theorem 4.4.8} in \cite{rudin1969function} for proofs of the facts mentioned below.

\vspace{1.5mm}

\noindent Fix $0 < q < 1/2$. Let $T : H^2(\mathbb{D}^2) \xrightarrow{} H^q(\mathbb{D})$ be defined as $Tf(z) = f \big ( \frac{1+z}{2},\frac{1+z}{2} \big )$ for every $z \in \mathbb{D}$ and $f \in H^2(\mathbb{D}^2)$. Then $T$ is a bounded linear operator that preserves cyclicity. The function $f(z_1,z_2) = e^{\frac{z_1 + z_2 + 2}{z_1 + z_2 - 2}}$ is outer in $H^2(\mathbb{D}^2)$ but $Tf(z) = e^{\frac{z+3}{z-1}}$ is not outer in $H^q(\mathbb{D})$ and so, $T$ does not preserve outer functions. \qedhere

\end{proof}

\noindent $(iii)$ Notice that the proof of $(1) \Rightarrow (2)$ in \emph{Theorem \ref{CPO_general_Q}} depends mostly on the properties of $\mathcal{X}$, since $\mathcal{Y}$ can be chosen to be fairly general. On the other hand, all the discussion about Hardy spaces shows that the proof of $(2) \Rightarrow (1)$ in \emph{Theorem \ref{CPO_general_Q}} depends on the properties of $\mathcal{Y}$. In \emph{Proposition \ref{Hardy_SIS_prop}}, we saw that the proof relies heavily on the properties of $H^p(\mathbb{D}^n)$ and might not work for other spaces. This shows that while properties \textbf{Q1}-\textbf{Q3} are quite reasonable for $\mathcal{X}$, it is not completely obvious what properties $\mathcal{Y}$ needs to have generally in order for the converse of \emph{Theorem \ref{CPO_general_Q}} to hold true.

\vspace{1.5mm}

To show some different application of the abstract results proved in \emph{Sections 2} and \emph{3}, we conclude our discussion by proving a GK\.Z-type theorem for spaces of analytic functions.

\section{GK\.Z-type theorem for spaces of analytic functions}

The following result was proved independently by A. M. Gleason (\emph{Theorem 1}, \cite{gleason1967characterization}), and J.-P. Kahane and W. \. Zelazko (\emph{Theorem 1}, \cite{kahane1968characterization}).

\begin{theorem} \label{GKZ}

Let $\mathcal{B}$ be a complex unital Banach algebra, and let $\Lambda \in \mathcal{B}^*$ be such that $\Lambda(1) = 1$. Then, $\Lambda(ab) = \Lambda(a) \Lambda(b)$ for every $a,b \in \mathcal{B}$ if and only if $\Lambda(a) \neq 0$ for every $a$ invertible in $\mathcal{B}$.

\end{theorem}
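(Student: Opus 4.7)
The plan is to prove the easy direction first and then develop the converse via the classical entire-function technique of Kahane and \.Zelazko. If $\Lambda$ is multiplicative with $\Lambda(1) = 1$ and $a$ is invertible, then $\Lambda(a)\Lambda(a^{-1}) = \Lambda(1) = 1$, so $\Lambda(a) \neq 0$. For the converse, suppose $\Lambda$ vanishes on no invertible element. The first observation is that $\Lambda(a) \in \sigma(a)$ for every $a \in \mathcal{B}$, because $a - \Lambda(a) \cdot 1 \in \ker \Lambda$ and the hypothesis forces every element of $\ker \Lambda$ to be non-invertible. In particular $|\Lambda(a)| \leq r(a) \leq \|a\|$, so $\Lambda$ is automatically continuous with $\|\Lambda\| = 1$; the boundedness one needs to invoke the analytic tools below comes for free from the hypothesis.

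Next I would carry out the entire-function step. For each $b \in \mathcal{B}$, set $\phi_b(\lambda) := \Lambda(e^{\lambda b})$. Since $\|e^{\lambda b}\| \leq e^{|\lambda| \|b\|}$, $\phi_b$ is an entire function of order at most one. The element $e^{\lambda b}$ is invertible for every $\lambda \in \mathbb{C}$ (with inverse $e^{-\lambda b}$), so by hypothesis $\phi_b$ is non-vanishing. Hadamard's factorization theorem then forces $\phi_b(\lambda) = e^{\alpha + \beta \lambda}$ for constants $\alpha, \beta$; reading off $\phi_b(0) = 1$ and $\phi_b'(0) = \Lambda(b)$ gives $\phi_b(\lambda) = e^{\Lambda(b) \lambda}$. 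Matching Taylor coefficients yields $\Lambda(b^n) = \Lambda(b)^n$ for every $n \geq 0$ and every $b \in \mathcal{B}$.

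To upgrade this to full multiplicativity on non-commuting pairs, I would run the argument in two complex variables. For $a, b \in \mathcal{B}$, let $\psi(z, w) := \Lambda(e^{z a} e^{w b})$; this is entire on $\mathbb{C}^2$, bounded by $\|\Lambda\| \, e^{|z| \|a\| + |w| \|b\|}$, and non-vanishing since $e^{z a} e^{w b}$ is invertible with inverse $e^{-wb} e^{-za}$. Writing $\psi = e^g$ for entire $g$ (simple connectedness of $\mathbb{C}^2$ makes the logarithm single-valued), the Borel--Carath\'eodory lemma applied in each variable separately (freezing the other) forces $g$ to be a polynomial of degree at most one in $z$ and in $w$, so $g(z, w) = c_0 + c_1 z + c_2 w + c_3 z w$. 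Choosing $z, w$ along a ray where $\operatorname{Re}(c_3 z w)$ grows quadratically and comparing with the linear growth bound on $\operatorname{Re}(g)$ forces $c_3 = 0$. Matching the slices $\psi(z, 0) = e^{\Lambda(a) z}$ and $\psi(0, w) = e^{\Lambda(b) w}$ identifies the remaining constants, giving $\psi(z, w) = e^{\Lambda(a) z + \Lambda(b) w}$. Comparing the coefficient of $z w$ in $\psi(z, w) = \sum_{n, m \geq 0} \Lambda(a^n b^m) z^n w^m / (n!\, m!)$ with that of the right-hand side yields $\Lambda(a b) = \Lambda(a) \Lambda(b)$.

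The main obstacle I anticipate is the bivariate Hadamard step, in particular ruling out the cross-term $c_3 z w$; this is precisely where non-commutativity prevents a direct polarization of the one-variable identity from closing the proof (in a commutative algebra, $\Lambda(b^n) = \Lambda(b)^n$ and the polarization $\Lambda((a + \lambda b)^n) = (\Lambda(a) + \lambda \Lambda(b))^n$ is already enough). Apart from verifying the Borel--Carath\'eodory growth bounds carefully in two variables, the remaining steps are routine bookkeeping of Taylor coefficients.
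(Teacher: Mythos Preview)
The paper does not supply its own proof of this statement: Theorem~\ref{GKZ} is quoted as the classical Gleason--Kahane--\.Zelazko theorem, with references to \cite{gleason1967characterization} and \cite{kahane1968characterization}, and no argument is given. So there is no in-paper proof to compare against.

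Your proposal is a correct and complete proof along the classical Kahane--\.Zelazko lines. The easy direction is fine. For the converse, the spectral observation $\Lambda(a)\in\sigma(a)$ and the one-variable entire-function step giving $\Lambda(b^n)=\Lambda(b)^n$ are standard and correctly stated. Your two-variable step for the non-commutative case is the right idea and is carried out correctly: $\psi(z,w)=\Lambda(e^{za}e^{wb})$ is entire and non-vanishing with $\operatorname{Re}\log\psi$ bounded linearly in $(|z|,|w|)$, the slice-by-slice Borel--Carath\'eodory argument forces $g=\log\psi$ to have the form $c_0+c_1z+c_2w+c_3zw$, and the linear global bound on $\operatorname{Re}(g)$ kills the cross-term $c_3$ exactly as you indicate (choose phases so that $\operatorname{Re}(c_3zw)=|c_3|\,t^2$ along $z=w=te^{i\theta}$). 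Matching the $zw$-coefficient then gives $\Lambda(ab)=\Lambda(a)\Lambda(b)$. Your remark that polarization of $\Lambda(b^n)=\Lambda(b)^n$ alone only yields the Jordan identity $\Lambda(ab+ba)=2\Lambda(a)\Lambda(b)$, and hence does not close the argument without commutativity, is exactly the point; the bivariate step is what handles it.

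It is worth noting that the paper's Appendix (Theorem~\ref{non_vanishing_entire}) proves precisely the multivariable Hadamard-type lemma you invoke: a non-vanishing entire function on $\mathbb{C}^n$ with $|F(z)|\le Ae^{Br^m}$ on $(r\mathbb{D})^n$ must be $e^{p}$ with $\deg p\le m$. Applying that lemma with $n=2$ and $m=1$ to $\psi$ gives your conclusion in one stroke, so your argument dovetails naturally with the paper's toolkit even though the paper itself does not prove Theorem~\ref{GKZ}.
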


\noindent We shall prove a similar result about \emph{partially multiplicative linear functionals} on spaces of analytic functions as an interesting byproduct of topics discussed in \emph{Sections 2} and \emph{3}.

\begin{definition} \label{definition_partially_multiplicative}

Suppose $\mathcal{X}$ is a space of functions that satisfies \textbf{Q1}-\textbf{Q3} over $\hat{D} \subset \mathbb{C}^n$. We will consider two types of partially multiplicative linear functionals $\Lambda \in \mathcal{X}^*$ as follows.

\begin{enumerate}

\item[\textbf{M1}] For every $\phi \in \mathcal{M}(\mathcal{X})$, $f \in \mathcal{X}$ we have $\Lambda(\phi f) = \Lambda(\phi) \Lambda(f)$.

\item[\textbf{M2}] For every $f,g \in \mathcal{X}$ such that $fg \in \mathcal{X}$ we have $\Lambda(fg) = \Lambda(f) \Lambda(g)$.

\end{enumerate}

\end{definition}

\noindent Note that if $\Lambda$ is \textbf{M2}, then $\Lambda$ is also \textbf{M1}. The converse need not be true in general. With this notation, the GK\.Z-type theorem for spaces $\mathcal{X}$ that satisfies \textbf{Q1}-\textbf{Q3} is as follows.

\begin{theorem} \label{GKZ_general}

Suppose $\widehat{\mathcal{X}}$ is a space of analytic functions on a set $\widehat{D} \subset \mathbb{C}^n$ that satisfies \textbf{Q1}-\textbf{Q3}. Let $\Lambda \in \mathcal{X}^*$ such that $\Lambda(1) = 1$. Then, the following are equivalent :

\begin{enumerate}

\item[$(i)$] $\Lambda(e^{w \cdot z}) \neq 0$ for every $w \in \mathbb{C}^n$.

\item[$(ii)$] $\Lambda \equiv \Lambda_b$ for some $b \in \widehat{D}$.

\item[$(iii)$] $\Lambda$ is \textbf{M2}.

\item[$(iv)$] $\Lambda$ is \textbf{M1}.

\end{enumerate}

\end{theorem}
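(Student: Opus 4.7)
The plan is to establish the equivalence via the short cycle $(i) \Rightarrow (ii) \Rightarrow (iii) \Rightarrow (iv) \Rightarrow (i)$, with almost all of the real work already done in \emph{Theorem \ref{fundamental_Q}} and in the multiplier structure worked out in \emph{Section 3}. Two of the arrows are essentially tautologies, one is an application of the main theorem with a normalization, and one is a multiplier-theoretic one-liner.

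For $(i) \Rightarrow (ii)$, I would apply \emph{Theorem \ref{fundamental_Q}} directly to $\Lambda$, producing constants $a \in \mathbb{C} \setminus \{0\}$ and $b \in \widehat{D}$ with $\Lambda \equiv a \Lambda_b$. Evaluating at the constant function $1$ and using $\Lambda(1) = 1 = \Lambda_b(1)$ forces $a = 1$, so $\Lambda = \Lambda_b$. For $(ii) \Rightarrow (iii)$, I would just use that the elements of $\widehat{\mathcal{X}}$ are genuine functions on $\widehat{D}$ and that their product, when it lies in $\widehat{\mathcal{X}}$, is the pointwise product: for any $f, g \in \widehat{\mathcal{X}}$ with $fg \in \widehat{\mathcal{X}}$, one has $\Lambda(fg) = (fg)(b) = f(b)\, g(b) = \Lambda(f)\Lambda(g)$.

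For $(iii) \Rightarrow (iv)$, the implication is immediate, since for $\phi \in \mathcal{M}(\widehat{\mathcal{X}})$ and $f \in \widehat{\mathcal{X}}$ the product $\phi f$ automatically lies in $\widehat{\mathcal{X}}$, so \textbf{M2} specializes to \textbf{M1}. For $(iv) \Rightarrow (i)$, the key observation is that $e^{w \cdot z} \in \mathcal{M}(\widehat{\mathcal{X}})$ for every $w \in \mathbb{C}^n$; this was established inside the proof of \emph{Lemma 4.2}, and both $e^{w \cdot z}$ and $e^{-w \cdot z}$ lie in $\widehat{\mathcal{X}}$ by \emph{Lemma \ref{exponential_existence}}. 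Applying \textbf{M1} to the multiplier $\phi = e^{w \cdot z}$ and the function $f = e^{-w \cdot z}$ yields
\[
\Lambda(e^{w \cdot z}) \cdot \Lambda(e^{-w \cdot z}) \;=\; \Lambda(e^{w \cdot z} \cdot e^{-w \cdot z}) \;=\; \Lambda(1) \;=\; 1,
\]
which forces $\Lambda(e^{w \cdot z}) \neq 0$.

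There is no genuine obstacle to surmount here: the cycle closes up cleanly because all the heavy lifting, i.e.\ the Hadamard-factorization style argument behind \emph{Theorem \ref{fundamental_Q}} and the identification of multipliers of $\widehat{\mathcal{X}}$, has already been carried out. The only point that deserves a sentence of care is $(ii) \Rightarrow (iii)$, where one must note that the abstract product in $\widehat{\mathcal{X}}$ is compatible with the pointwise product on $\widehat{D}$, which is built into the construction of $\widehat{\mathcal{X}}$ in \emph{Section 3}.
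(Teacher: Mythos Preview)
Your proposal is correct and follows essentially the same cycle $(i) \Rightarrow (ii) \Rightarrow (iii) \Rightarrow (iv) \Rightarrow (i)$ as the paper, invoking \emph{Theorem \ref{fundamental_Q}} for the first step and the multiplier property of $e^{w\cdot z}$ for the last. You spell out the normalization $a=1$ and the pointwise-product reasoning for $(ii)\Rightarrow(iii)$ more explicitly than the paper does, but the argument is the same.
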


\begin{proof}

Note that $(i) \Rightarrow (ii)$ follows from \emph{Theorem \ref{fundamental_Q}}, and that $(II) \Rightarrow (iii) \Rightarrow (iv)$ is obvious from the definitions of \textbf{M1} and \textbf{M2} functionals.

\vspace{2mm}

\noindent For the proof of $(iv) \Rightarrow (i)$, assume $\Lambda$ is \textbf{M1} and note that $e^{w \cdot z} \in \mathcal{M}(\widehat{\mathcal{X}})$ for every $w \in \mathbb{C}^n$. Thus, $$\Lambda(e^{w \cdot z}) \Lambda(e^{-w \cdot z}) = \Lambda(e^{w \cdot z} \cdot e^{-w \cdot z}) = \Lambda(1) = 1$$ for every $w \in \mathbb{C}^n$. Therefore $\Lambda(e^{w \cdot z}) \neq 0$ for every $w \in \mathbb{C}^n$. \qedhere

\end{proof}

We now show that the notion of maximal domains that we used in this paper comes from a class of partially multiplicative linear functionals.

\begin{definition}

Suppose $\mathcal{X}$ satisfies \textbf{P1}-\textbf{P3}. Let $\Lambda$ be a linear functional. We say $\Lambda$ is \textbf{M0} if for every choice of polynomials $p$ and $q$, $\Lambda(pq) = \Lambda(p) \Lambda(q)$. 

\end{definition}

\noindent With this definition in mind, we have the following simple characterization of \textbf{M0} functionals.

\begin{proposition}

Suppose $\mathcal{X}$ satisfies \textbf{P1}-\textbf{P3} over an open set $D \subset \mathbb{C}^n$. Then, $\Lambda \in \mathcal{X}^*$ is \textbf{M0} if and only if $\Lambda |_{\mathcal{P}} \equiv \Lambda_b |_{\mathcal{P}}$ for some $b \in \sigma_r(S)$.

\end{proposition}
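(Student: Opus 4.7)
The plan is to exploit the fact that any multiplicative functional on a polynomial algebra is determined by its values on the coordinate functions, and then invoke \emph{Proposition \ref{spectrum_lemma}} to place the resulting point in $\sigma_r(S)$.

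For the forward direction, suppose $\Lambda \in \mathcal{X}^*$ is \textbf{M0}. The first step is to observe that $\Lambda(1) = \Lambda(1 \cdot 1) = \Lambda(1)^2$, forcing $\Lambda(1) \in \{0, 1\}$. If $\Lambda(1) = 0$, then by \textbf{M0} we have $\Lambda(p) = \Lambda(p \cdot 1) = 0$ for every $p \in \mathcal{P}$, which is the degenerate case; this case does not arise from any $\Lambda_b$, and I would simply note that the natural interpretation of the statement excludes this (alternatively, one imposes $\Lambda(1) = 1$ to rule it out). Assume now $\Lambda(1) = 1$. Set $b_i := \Lambda(z_i)$ for each $1 \leq i \leq n$ and $b := (b_1, \dots, b_n) \in \mathbb{C}^n$. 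Applying \textbf{M0} iteratively to monomials yields $\Lambda(z^\alpha) = b_1^{\alpha_1} \cdots b_n^{\alpha_n} = b^\alpha$ for every $\alpha \in \mathbb{Z}^+(n)$, and then linearity extends this to $\Lambda(p) = p(b)$ for every $p \in \mathcal{P}$. Hence $\Lambda|_{\mathcal{P}} \equiv \Lambda_b|_{\mathcal{P}}$.

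To conclude the forward direction, I need $b \in \sigma_r(S)$. This is exactly the content of \emph{Proposition \ref{spectrum_lemma}}: since $\Lambda_b|_{\mathcal{P}}$ admits a bounded linear extension to all of $\mathcal{X}$ (namely $\Lambda$ itself), that proposition forces $b \in \sigma_r(S)$.

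The reverse direction is immediate. If $\Lambda|_{\mathcal{P}} \equiv \Lambda_b|_{\mathcal{P}}$ for some $b \in \sigma_r(S)$, then for any two polynomials $p, q \in \mathcal{P}$ the pointwise product $pq$ is again a polynomial, and
\[
\Lambda(pq) = (pq)(b) = p(b) q(b) = \Lambda(p) \Lambda(q),
\]
so $\Lambda$ is \textbf{M0}. There is no real obstacle in either direction; the only subtle point is the degenerate case $\Lambda(1) = 0$, which must be acknowledged and excluded, and the use of \emph{Proposition \ref{spectrum_lemma}} to locate $b$ in the right Harte spectrum, rather than in all of $\mathbb{C}^n$.
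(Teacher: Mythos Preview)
Your proof is correct and follows essentially the same route as the paper: define $b_i = \Lambda(z_i)$, use \textbf{M0} to get $\Lambda(p) = p(b)$ on polynomials, and then appeal to \emph{Proposition \ref{spectrum_lemma}} (the paper instead points back to the argument in the proof of \emph{Theorem \ref{fundamental_P}}, which is the same thing). You are in fact more careful than the paper in flagging the degenerate case $\Lambda(1)=0$, which forces $\Lambda|_{\mathcal P}\equiv 0$ and cannot be written as $\Lambda_b|_{\mathcal P}$; the paper tacitly assumes the normalization $\Lambda(1)=1$ coming from the surrounding GK\.Z context.
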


\begin{proof}

Suppose $\Lambda \in \mathcal{X}^*$ is \textbf{M0}. This means that for every $1 \leq i \leq n$ and $k \in \mathbb{N}$, $\Lambda(z_i^k) = (\Lambda(z_i))^k$. Pick $b = (\Lambda(z_i))_{i = 1}^n \in \mathbb{C}^n$ so that $\Lambda(p) = p(b)$ for every polynomial $p$. To show $b$ lies in $\sigma_r(S)$ is the same as that in the proof of \emph{Theorem \ref{fundamental_P}}.

\vspace{1.5mm}

\noindent The converse is obvious since for every choice of polynomials $p$ and $q$, $\Lambda(pq) = p(b)q(b) = \Lambda(p) \Lambda(q)$. \qedhere

\end{proof}

\noindent What this means is that all reasonable notions of partially multiplicative linear functionals align when we consider these nice spaces of analytic functions, and \emph{Theorem \ref{GKZ_general}} gives a way to describe these functionals in terms of their action on a certain set of exponential functions.

\vspace{1.5mm}

A similar result about different notions of partially multiplicative linear functionals on reproducing kernel Hilbert spaces with the complete Pick property was recently discovered (\emph{Corollary 3.4}, \cite{aleman2017smirnov}). It was proved that in the case of a complete Pick space, \textbf{M1} and \textbf{M2} are equivalent. It should be noted that this is not a special case of \emph{Theorem \ref{GKZ_general}} since it covers Hilbert spaces of functions that are not necessarily analytic. On the other hand, \emph{Theorem \ref{GKZ_general}} covers certain Banach spaces of analytic functions and not just Hilbert spaces.

\vspace{1.5mm}

\noindent It is also worth mentioning that just as we devised a maximal domain from \textbf{M0} functionals on $\mathcal{X}$, one can construct a different notion of maximal domain from \textbf{M1} and \textbf{M2}. Depending on what properties we want the extension $\hat{\mathcal{X}}$ to have, we may want to choose between \textbf{M0}, \textbf{M1} and \textbf{M2}.

\vspace{1.5mm}

Since we were interested in preserving the behaviour of the shift operators and cyclic functions, \textbf{M0} seemed appropriate as it respects multiplication of any function in $\mathcal{X}$ with any polynomial. If we were interested in a more algebraic behaviour as in \cite{aleman2017smirnov}, we would choose \textbf{M2} instead since it is the strongest notion of partially multiplicative linear functionals we can expect from spaces of functions that are not algebras already. For a detailed discussion on this topic, refer to \emph{Section 2} in \cite{mccarthy2017spaces}.

\section{Appendix}

In the proof of \emph{Theorem \ref{fundamental_P}}, we used it as a fact that all non-vanishing entire functions that satisfy a nice growth condition are of a specific type. This is not entirely obvious and we give a proof for it in this section. Since the proof of this fact is independent of all the other discussion, it seemed appropriate to present it separately.

\begin{theorem} \label{non_vanishing_entire}

Fix $n \in \mathbb{N}$. Let $F \in \text{\emph{Hol}}(\mathbb{C}^n)$ be a non-vanishing entire function for which there exist constants $A, B$ such that $|F(z)| \leq A e^{B r^m}$ for all $z \in (r \mathbb{D})^n$ and for all $r > 0$. Then, there exists a polynomial $p$ with $\text{\emph{deg}}(p) \leq m$ such that $F(z) = e^{p(z)}$ for all $z \in \mathbb{C}^n$.

\end{theorem}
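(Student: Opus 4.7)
The plan is to build a holomorphic logarithm of $F$ and then show, via a Borel-Carath\'eodory-type estimate, that this logarithm is a polynomial of degree at most $m$. Since $F$ is entire and non-vanishing and $\mathbb{C}^n$ is simply connected, there exists an entire function $g : \mathbb{C}^n \xrightarrow{} \mathbb{C}$ with $F = e^g$ (one constructs $g$ by fixing a value of $\log F(0)$ and integrating $dF/F$ along paths; independence of path follows from simple connectivity). The hypothesis then translates to $\text{Re}(g(z)) = \log |F(z)| \leq \log A + B r^m$ for every $z \in (r \mathbb{D})^n$ and every $r > 0$.

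Next, expand $g$ around the origin as $g(z) = \sum_{k = 0}^{\infty} G_k(z)$, where $G_k$ is the homogeneous polynomial of degree $k$ in the Taylor series of $g$. The goal is to prove $G_k \equiv 0$ whenever $k > m$. To do this I would slice: for each fixed $w \in \mathbb{C}^n \setminus \{ 0 \}$ with $\| w \|_{\infty} := \max_i |w_i| = 1$, consider the entire function of one variable
\[ h_w(\lambda) := g(\lambda w) = \sum_{k = 0}^{\infty} G_k(w) \lambda^k, \quad \lambda \in \mathbb{C}. \]
Whenever $|\lambda| \leq r$, the point $\lambda w$ lies in $(r \mathbb{D})^n$, so $\text{Re}(h_w(\lambda)) \leq \log A + B r^m$.

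Now apply the classical Borel-Carath\'eodory inequality to $h_w$ on the disc of radius $r$: for every $k \geq 1$,
\[ |G_k(w)| \leq \frac{2 \big ( \log A + B r^m - \text{Re}(g(0)) \big )}{r^k}. \]
If $k > m$, the right-hand side tends to $0$ as $r \xrightarrow{} \infty$, forcing $G_k(w) = 0$ for every unit-polydisc-normalized $w$, hence $G_k \equiv 0$ by homogeneity. Therefore $g = \sum_{k = 0}^{m} G_k$ is a polynomial $p$ of degree at most $m$, and $F = e^p$.

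The main technical point is really just invoking the holomorphic logarithm in several variables; after that the argument is a routine slice-wise application of Borel-Carath\'eodory, and no genuinely new obstacle arises from the multivariable setting because the growth hypothesis on $F$ is phrased uniformly over polydiscs. One mild care point is to ensure the Borel-Carath\'eodory bound is applied in the correct form (it gives a bound on coefficients $c_k$ of a holomorphic function in terms of $\sup \text{Re}$ and $\text{Re}\, g(0)$, not the modulus), but this is standard.
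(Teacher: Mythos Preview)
Your proof is correct and follows essentially the same route as the paper: take a holomorphic logarithm, expand in homogeneous components, slice along complex lines $\lambda \mapsto g(\lambda w)$, and use a Borel--Carath\'eodory bound on the real part to kill the coefficients of degree greater than $m$. The only cosmetic difference is that the paper derives the one-variable coefficient estimate from scratch via a M\"obius map and the Schwarz lemma before reducing the multivariable case to it, whereas you invoke Borel--Carath\'eodory directly.
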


\begin{proof}

Since $F$ is non-vanishing, there exists an entire function $G$ such that $F = e^G$. Note that the hypothesis then implies $\text{Re}(G) \leq \ln{A} + Br^m$ in $(r \mathbb{D})^n$. Let $C := \text{max}\{\ln{A},B\}+1$ so that $\text{Re}(G) < C(1 + r^m)$ in $(r \mathbb{D}^n)$. We need to show that this implies $G$ is a polynomial with deg$(G) \leq m$.

\vspace{1.5mm}

\noindent First, consider the case $n = 1$. Even though this is a well-known result, we provide a short proof for the sake of completion. Fix $r > 0$ and define $g_r(z) := G(rz)$ for all $z \in \mathbb{C}$. By the hypothesis, $g_r$ maps the unit disc $\mathbb{D}$ into $\{ \text{Re}(w) < C(1 + r^m) \}$ which is a left half-plane.

\vspace{1.5mm}

\noindent Using the M\"obius transformation $\phi(z) := C(1 + r^m) + \frac{z+1}{z-1}$ which maps $\mathbb{D}$ onto $\{ \text{Re}(w) < C(1 + r^m) \}$, and applying Schwarz lemma to $\phi^{-1} \circ g_r$, we get \[ \Bigg \rvert \frac{g_r(z)-g_r(w)}{2C(1+r^m)-g_r(z)-\overline{g_r(w)}} \Bigg \rvert \leq \bigg \rvert \frac{z-w}{1 - z \overline{w}} \bigg \rvert \] After plugging in $w = 0$, clearing out the denominators, and using triangle inequality we get \[ |g_r(z) - G(0)| \leq \big ( 2C(1+r^m) + |g_r(z)| + |G(0)| \big ) \cdot |z| \text{ for every } z \in \mathbb{D} \] Fixing $|z| = 1/2$, we get $|g_r(z)| - |G(0)| \leq |g_r(z) - G(0)| \leq C(1+r^m) + \frac{|g_r(z)|}{2} + \frac{|G(0)|}{2}$ which simplifies to \[ |G(rz)| \leq 2C(1+r^m) + 3|G(0)| \text{ whenever } |z| = 1/2 \] Since the choice of $r > 0$ was arbitrary, we get that for every $z \in \mathbb{C}$, $$|G(z)| \leq 2C(1+2^m |z|^m) + 3|G(0)|.$$ Choosing another appropriate constant $C_0$, we get $|G(z)| \leq C_0(1 + r^m)$ for all $r > |z|$. If $G = \underset{k \in \mathbb{N}}{\sum} a_k z^k$ is the power-series of $G$, then using the above relation with Cauchy estimates for $a_k$ gives us $|a_k| \leq C_1/r^{k - m} $ for some constant $C_1$ and all $r > 0$. Thus, $a_k = 0$ for all $k > m$ which means $G$ is a polynomial with deg$(G) \leq m$ as desired.

\vspace{1.4mm}

Assume now that $n > 1$. Let $G(z) = \underset{k \in \mathbb{N}}{\sum} G_k(z)$ be the homogeneous expansion of $G$. Fix $z \in \mathbb{C}^n$ and let $g_z(\lambda) := G(\lambda z) = \underset{k \in \mathbb{N}}{\sum} \lambda^k G_k(z)$ for $\lambda \in \mathbb{C}$. Notice that $$\text{Re}(g_z(\lambda)) = \text{Re}(G(\lambda z)) \leq \ln{A} + B \cdot C^m |\lambda|^m$$ where $C = \underset{1 \leq j \leq n}{\text{sup}} |z_j|$, since $z \in (r \mathbb{D})^n$ for every $r > C$. Thus, $$\text{Re}(g_z(\lambda)) \leq \ln{A} + B \cdot C^m r^m$$ whenever $\lambda \in r \mathbb{D}$.

\vspace{1.5mm}

\noindent By the one variable case, we know that this implies $G_k(z) = 0$ for all $k > m$. Since the choice of $z \in \mathbb{C}^n$ was arbitrary, this means $G_k(z) = 0$ for all $z \in \mathbb{C}^n$ and $k > m$. Therefore $G$ is a polynomial with deg$(G) \leq m$. \qedhere

\end{proof}

\bibliographystyle{plain}

\bibliography{main}

\end{document}